\documentclass[10pt]{article}

\usepackage{amsthm} 
\usepackage{amsfonts, mathrsfs,amssymb}
\usepackage{amsmath}
\usepackage[numbers, sort&compress]{natbib}
\usepackage{graphicx}
\usepackage{vmargin, enumerate}
\usepackage{setspace}
\usepackage{paralist}
\usepackage[pdftex]{hyperref}

\usepackage[small,bf]{caption2}


\newcommand{\R}{\mathbb{R}}
\newcommand{\Z}{\mathbb{Z}}
\newcommand{\N}{{\mathbb N}}

\newcommand{\E}[1]{{\mathbf E}\left[#1\right]}
\newcommand{\Ec}[1]{{\mathbf E}[#1]}
\newcommand{\e}{{\mathbf E}}
\newcommand{\V}[1]{{\mathbf{Var}}\left[#1\right]}

\newcommand{\p}[1]{{\mathbf P}\left(#1\right)}  
\newcommand{\pc}[1]{{\mathbf P}(#1)}  

\newcommand{\I}[1]{{\mathbf 1}_{[#1]}}
\newcommand{\set}[1]{\left( #1 \right)}
\newcommand{\Cprob}[2]{\mathbf{P}\set{\left. #1 \; \right| \; #2}}
\newcommand{\probC}[2]{\mathbf{P}\set{#1 \; \left|  \; #2 \right. }}



\newcommand{\pran}[1]{\left(#1\right)}

\newtheorem{thm}{Theorem}
\newtheorem{lem}[thm]{Lemma}

\newtheorem{cor}[thm]{Corollary}


\newcommand{\ssn}{\mathcal{N}}


\newcommand{\event}[1]{\left[\, #1 \,\right]}
\newcommand{\CExp}[2]{\mathbf{E}\event{\left. #1 \; \right| \; #2}}

\hypersetup{
    bookmarks=true,         
    unicode=false,          
    pdftoolbar=true,        
    pdfmenubar=true,        
    pdffitwindow=true,      
    pdftitle={My title},    
    pdfauthor={Author},     
    pdfsubject={Subject},   
    pdfnewwindow=true,      
    pdfkeywords={keywords}, 
    colorlinks=true,       
    linkcolor=blue,          
    citecolor=blue,        
    filecolor=blue,      
    urlcolor=blue           
}


\newcommand{\phat}[1]{\ensuremath{\hat{\mathbf P}}\left(#1\right)}  
\newcommand{\phatc}[1]{\ensuremath{\hat{\mathbf P}}(#1)}            
     
\newcommand{\Cphatc}[2]{\hat{\mathbf{P}}( #1 \; | \; #2)}

\newcommand{\m}{M} 
\newcommand{\mb}{R} 



\begin{document}

\title{\bf Total Progeny in Killed Branching Random Walk}
\author{L. Addario-Berry \and  N. Broutin} 

\maketitle
\begin{abstract}We consider a branching random walk for which the maximum position of a particle in the n'th generation, $\mb_n$, has zero speed on the linear scale: $\mb_n/n \to 0$ as $n\to\infty$. We further remove (``kill'') any particle whose displacement is negative, together with its entire descendence. The size $Z$ of the set of un-killed particles is almost surely finite \cite{gantert2006,hu07minimal}. In this paper, we confirm a conjecture of Aldous \cite{aldouskilled,Aldous1998b} that $\E{Z}<\infty$ while $\E{Z\log Z}=\infty$. The proofs rely on precise large deviations estimates and ballot theorem-style results for the sample paths of random walks.
\end{abstract} 

\section{Introduction}\label{sec:intro} 

Consider a branching random walk. The particles form the set of individuals of a Galton--Watson process: there is a unique ancestor (root of the tree) which gives birth to $B$ children in the first generation. The children behave independently and themselves give birth to children according to the same offspring distribution $B$. We suppose throughout the paper that this branching process is supercritical $\e B>1$, so that it survives with positive probability \cite{AtNe1972}, and that $\e B < \infty$. We can think of the set of \emph{potential} individuals as a subset of the infinite tree 
$$\mathcal U=\bigcup_{n\ge 0} \N^n,$$
where every node at level $n$ is a word $u=u_1 u_2 \dots u_n$ of $n$ integers. The root is then $\varnothing$, with potential children $1, 2,\dots$, and the structure of the tree is such that the ancestors of a node $u=u_1u_2\dots u_n$ are the prefixes $\varnothing$, $u_1,u_1u_2$, etcetera, up to $u_1\dots u_{n-1}$. Given $\{B_u, u\in \mathcal U\}$ a family of independent and identically distributed (i.i.d.) random copies of $B$, the Galton--Watson tree $\mathcal T$ is the subtree of $\mathcal U$ 
consisting of all nodes $u=u_1,\ldots,u_n$ for which, for all $1 \leq i \leq n$, $u_i < B_{u_1\dots u_{i-1}}$ 
(in the case $i=1$ this notation means that $u_1 < B_{\emptyset}$) --- see \cite{Neveu1986,LeGall2005}

We also suppose that each node $u\in \mathcal U$ carries a real position, or displacement. 
Given a family $\{X_u, u\in \mathcal U\setminus\{\emptyset\}\}$,
a family of i.i.d.\ copies of a random variable $X$, the displacement of a node $u$ is 
$$S_u=\sum_{w\preceq u,w\neq\emptyset} X_u,$$ 
where $w\preceq u$ means that $w$ is an ancestor of $u$ (and $u\preceq u$). Thus, 
for each node $u$, $X_u$ is the displacement of $u$ relative to its parent, and we let the root have 
displacement $S_{\emptyset}=0$. 
Then, along each branch of the tree $\mathcal U$, the positions of the particles follow a random walk with step size $X$. The collection $\{S_u: u\in \mathcal T\}$ is a branching random walk with step size $X$ and branch factor $B$.

We say that a particle $u\in \mathcal U$ is {\em living} if the random walk on the branch from the root to $u$ never takes a negative value: that is, $u$ is living if $$S_w\ge 0 \qquad \mbox{for all}\quad w\preceq u.$$ 
We are interested in the subtree $\mathscr L$ of $\mathcal T$ consisting only of living particles.
We say that the pair $\mathscr L$, $\{S_u:u\in \mathscr L\}$ is a \emph{killed branching random walk}. It is natural that the behaviour of the tree $\mathscr L$, and in particular its size, should be related to the behaviour of the maximum $\mb_n$ of the positions of the particles $u\in \mathcal T_n=\{x\in \mathcal \N^n: x \in \mathcal T\}$ lying in the $n$'th generation of $\mathcal T$, and we now elaborate on this. Let $\Lambda$ be the cumulant generating function of $X$ given by
$$\Lambda(\lambda)=\log \E{e^{\lambda X}},$$
defined for $\lambda \in \mathcal D$, the set of values $\lambda$ for which $\Lambda(\lambda)<\infty$. Let $\mathcal D^o$ be the interior of $\mathcal D$. Let also $f(\lambda)=\lambda \Lambda'(\lambda)-\Lambda(\lambda)$. The function $\Lambda$ is infinitely differentiable and convex on $\mathcal D^o$, and $f$ is strictly convex on $\mathcal D^o$ (see \cite{DeZe1998}, Lemma~2.2.5 and Exercise~2.2.24). 

Suppose that there exists a (necessarily unique) $\lambda \in \mathcal D^o$ with $\lambda > 0$, for which 
$f(\lambda) = \log \E{B}$. Then 
the Biggins--Hammersley--Kingman theorem \cite{Biggins1977,Kingman1975,Hammersley1974} states that, conditioned on non-extinction, the maximum position $\mb_n$ of a particle in the $n$'th generation of a well-controlled branching random walk
satisfies 
\[ 
\lim_{n\rightarrow \infty} \frac{\mb_n}{n} = \Lambda'(\lambda) \quad \mbox{ almost surely and in expectation}. 
\] 
We call the branching random walk \emph{well-controlled} if there exists a (necessarily unique) $\lambda^\star \in \mathcal D^o$ with $\lambda^\star > 0$ such that $\Lambda'(\lambda^\star)=0$. 
If the branching random walk is well-controlled then we call both the un-killed and killed branching random walks  supercritical, critical, or subcritical according as $f(\lambda^\star)$ is greater than, equal to, or less than $\log \E{B}$. 

If the killed branching random walk is supercritical, it is not hard to see that the maximum position of a living particle still tends to $+\infty$ almost surely and in expectation with the same linear speed as before. In this case, it is fairly straightforward to calculate the logarithmic growth rate of the total number of progeny in the $n$'th generation. 

In subcritical case, it is equally clear that extinction eventually occurs. 
The critical case is not as clear since it might be the case that $\mb_n=o(n)$ but that $\mb_n \to +\infty$.
However, it is not too hard to convince oneself that $\mb_n\to -\infty$ in expectation, since $\e{\mb_{kn}} \geq k\e{\mb_n}$ for all $k,n \geq 1$. 
(This can be seen by considering first the particle of $v$ of maximal displacement at the $n$'th generation, then the 
particle of maximal displacement at the $2n$'th generation that is a descendent of $v$, and so on.) If $\e{\mb_n}$ were positive for some $n$, it 
would then follow that at least along a subsequence, $\e{\mb_{n}}$ would grow at a positive linear speed, 
contradicting the Biggins--Hammersley--Kingman theorem. In fact, in the critical case, \citet{hu07minimal} have proved that almost surely 
\begin{equation}\label{eq:hu-shi}
\limsup_{n\to \infty} \frac{\mb_n}{\log n} \le - \beta,
\end{equation}
for some positive constant $\beta$, which implies that eventually every branch goes extinct with probability one. (In the special case that $X \in \Z$ a.s.,
 this also follows from work of \citet{gantert2006}.)

In these last two cases, the parameters of interest are the total number $Z=|\mathscr L|$ of living individuals in the process, the maximum location that a particle ever reaches, 
$$\m = \sup_{n \geq 0} \m_n,$$
where $\m_n = \sup\{S_u : u \in \mathscr L_n\}$, 
and where $\mathscr L_n=\mathscr L \cap \N^n$.
\citet{aldouskilled} has conjectured that in the critical case, $\e{Z}< \infty$ but $\E{Z\log Z}=\infty$, and that in the subcritical case, $Z$ has power law tails. 

\citet{pemantle99killed} found exact asymptotics for the probability distribution of $Z$ in the instructive special case that $X\in \{-1, +1\}$. In this setting, the criticality condition implies that $p:=\p{X=1}=(2-\sqrt 3)/4$, the smallest root of $16p(1-p)=1$. He found that 
\[\p{Z=n} = \frac{c +o(1)}{n^2 \log^2 n}, \qquad \mbox{with}\qquad c= \frac{\log(1/4p)}{4p}=4.915\dots.\]
It is then clear that $\E{Z}<\infty$ while $\E{Z\log Z}=\infty$. His proof relies on a recursive description of the process. The fact that $X$ only takes unit steps turns out to be crucial and allows for a precise study of the probability generating function $\E{s^Z}$ via singularity analysis methods \cite{FlOd1990,FlSe2009}. 
          
In this paper we verify the critical case of Aldous' conjecture. 

\begin{thm}\label{thm:critical}Consider a critical killed branching random walk and let $Z$ be the total progeny of the process. Then $\E{Z} < \infty$. If additionally $\E{B\log^8 B} < \infty$ then $\E{Z\log Z}=\infty$. 
\end{thm}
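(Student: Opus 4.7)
The plan is to treat the two conclusions separately. The first, $\e{Z} < \infty$, is a straightforward first-moment estimate; the second, $\e{Z \log Z} = \infty$, requires a delicate lower bound on the tail of $Z$.

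For $\e{Z} < \infty$, write $Z = \sum_{n \geq 0} Z_n$ with $Z_n = |\mathscr L_n|$. The many-to-one lemma gives
\[
\e{Z_n} \;=\; (\e B)^n \cdot \p{S_k \geq 0 \text{ for all } 1 \leq k \leq n},
\]
where $(S_k)$ is a random walk of step law $X$. Criticality reads $\Lambda(\lambda^\star) = -\log \e B$ together with $\Lambda'(\lambda^\star) = 0$, so an exponential tilt by $\lambda^\star$ absorbs the $(\e B)^n$ factor and transforms the display into
\[
\e{Z_n} \;=\; \Ehat{e^{-\lambda^\star S_n} \I{S_k \geq 0, \, 1 \leq k \leq n}},
\]
with $(S_k)$ centered of finite positive variance $\Lambda''(\lambda^\star)$ under the tilted measure. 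A ballot-theorem estimate for centered random walks conditioned to stay nonnegative says that $\hat{\mathbf{P}}\{S_k \geq 0\ \forall k \leq n,\ S_n \in dx\}$ is of order $(x/n^{3/2})\, dx$ uniformly for $0 \leq x \ll \sqrt n$; integrating the exponential weight $e^{-\lambda^\star x}$ against this yields $\e{Z_n} = O(n^{-3/2})$, and summing is immediate.

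For $\e{Z \log Z} = \infty$ I aim for the tail estimate
\[
\p{Z \geq t} \;\gtrsim\; \frac{1}{t\,(\log t)^2},
\]
for $t$ large, since then $\e{Z \log Z} \geq \int_1^\infty (\log t) \p{Z \geq t}\, dt$ diverges. The target is produced by an explicit ``good event'': at some generation $n \asymp (\log t)^2$, a living particle sits near a height $h \asymp \log t$, \emph{and} the descendant subtree rooted at that particle --- which, by the branching Markov property, is an independent killed branching random walk issued from height $h$ --- has at least $t$ living descendants. Conditional independence of the two requirements lets their probabilities multiply, and balancing the ballot-type bound $h\,e^{-\lambda^\star h}/n^{3/2}$ for the first against the output size of the second dictates the scaling $h = c \log t$, $n = c' (\log t)^2$.

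Executing this requires two technical inputs. First, sharp two-sided ballot and local-limit estimates for the tilted walk conditioned to stay nonnegative, combined with a many-to-two computation and a Paley--Zygmund second-moment bound to convert the first-moment expected count of living particles near $(n, h)$ into a genuine probability lower bound for existence. Second, a uniform-in-$h$ lower bound on the probability that a killed branching random walk started from height $h$ has size at least $t$: before the killing threshold bites, the subtree grows geometrically, and the relevant deviation rate of the ``leftmost descendant'' controls when this geometric growth is halted. The moment hypothesis $\e{B \log^8 B} < \infty$ enters precisely to control offspring variance along the size-biased spines appearing in these second-moment computations --- the polylogarithmic moment of $B$ is the natural quantity that makes the associated martingale/Burkholder bounds go through without leaking logarithmic factors. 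The main obstacle will be this calibration: the target tail $1/(t (\log t)^2)$ is tight, and any stray factor of $\log t$ lost in either the ballot estimate or the subtree estimate would destroy the divergence of $\e{Z \log Z}$, so both ingredients must be sharp and uniform across the relevant window of heights and generations.
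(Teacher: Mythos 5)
Your first half is essentially the paper's proof of $\E{Z}<\infty$: many-to-one, tilt by $\lambda^\star$ (criticality absorbing $(\e B)^n$), and the ballot estimate giving $\E{|\mathscr L_n|}=O(n^{-3/2})$. The only thing to add is a separate (Chernoff) bound for the contribution of $\{S_n\geq \log^2 n\}$, since the ballot/local estimate is only uniform for $S_n=O(\sqrt n)$; this is routine.

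The second half has a genuine gap at its core. Your route goes through the tail bound $\p{Z\geq t}\gtrsim 1/(t\log^2 t)$, and its decisive ingredient is the ``uniform-in-$h$ lower bound on the probability that a killed branching random walk started from height $h$ has at least $t$ living descendants'' with $t\asymp e^{\lambda^\star h}$. That is a lower bound on $Z^{(h)}$ at the scale of its \emph{mean}, and the Paley--Zygmund device you invoke cannot deliver it directly: the second moment of the total progeny is infinite (indeed $Z\geq 1$ and $Z^2\geq Z\log Z$, so $\E{Z^2}\geq \E{Z\log Z}=\infty$ is exactly the phenomenon under proof, and $Z^{(h)}$ stochastically dominates $Z$). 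Any second-moment argument must therefore be applied to a heavily truncated functional, and whether that truncation can be arranged without leaking a logarithmic factor is precisely the ``calibration'' you flag as the main obstacle --- it is not a technicality you can defer. The authors state explicitly that their methods give upper but not lower bounds on the tail of $Z$; the lower tail asymptotics are only known in Pemantle's $\pm1$ case.

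The paper avoids needing any tail bound. It conditions on the event $\mathcal M^u_k$ that the global maximum of the killed process equals $k$ and is first attained at $u$, computes only the conditional \emph{first} moment $\CExp{Z_u}{\mathcal M^u_k}\geq \gamma\, e^{\lambda^\star k}/k$ (by counting descendants of $u$ that return to absolute height $0$ around generation $k^2$, via the two-barrier ballot estimate and the tilt), and then applies conditional Jensen's inequality to the convex function $x\mapsto x\log x$ to get $\CExp{Z_u\log Z_u}{\mathcal M^u_k}\geq \gamma'\, e^{\lambda^\star k}$. Combined with the lower bound $\p{\m=k}=\Omega(e^{-\lambda^\star k}/k)$ this yields $\E{Z\log Z}\geq \gamma''\sum_k 1/k=\infty$ with no control of the distribution of $Z_u$ beyond its conditional mean. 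Two further points where your sketch diverges from what actually works: the existence bound $\p{\m=k}=\Omega(e^{-\lambda^\star k}/k)$ is obtained not by a second-moment method but by restricting to a set $B_{k,n}$ of ``strict first maximizers'' that contains at most one element, so that $\p{B_{k,n}\neq\emptyset}=\E{|B_{k,n}|}$ exactly; and the hypothesis $\E{B\log^8 B}<\infty$ is not there to control offspring variance or martingale bounds --- it is used to bound $\phatc{C_i\geq m}$ by $\E{B\log^8 B}/\log^8 m$ so that the product over spine vertices of $\phatc{\hat{\mb}^{\star v_i}<i^{1/7}}$ stays bounded away from $0$. If you want to keep your architecture, you must either supply the truncated second-moment estimate for the subtree size or, better, replace the tail-bound step by the conditional-Jensen step.
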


\paragraph{Remarks.} 
The moment condition on $B$ that arises in the above theorem is technical and is required for the use of 
the size-biasing technique explained below. We believe that the theorem should hold as long as $\e{B}<\infty$. 
We were not able to obtain more detailed information about the probability distribution of $Z$. 
(Our approach can provide upper bounds on the tail probabilities of $Z$, via Markov's inequality, 
but does not seem well-suited to proving lower bounds for such tail probabilities in either the critical or subcritical case.) 
However, it is very likely the case that for a large class of critical killed branching random walks, 
\[\p{Z=n}=\Theta\pran{\frac 1{n^2 \log^2 n}}.\]

We also provide the following estimates for the maximum position of any living particle. 

\begin{thm}\label{thm:max}The maximum position $\m$ attained by any particle in a critical killed branching random walk 
satisfies
$$\p{\m\ge k}\le e^{-k\lambda^\star }\quad \mbox{for all}~k,\qquad \mbox{and}\qquad\p{\m = k}=\Omega\pran{\frac {e^{-k \lambda^\star }}{k}}\quad \mbox{as}~k\to\infty.
$$ 
\end{thm}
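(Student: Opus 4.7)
The plan for the upper bound is the standard exponential martingale argument. In the critical, well-controlled regime, $\Lambda'(\lambda^\star)=0$ together with $f(\lambda^\star)=\log\E{B}$ forces $\E{B}\cdot e^{\Lambda(\lambda^\star)}=1$, so $W_n = \sum_{u\in\mathcal T_n} e^{\lambda^\star S_u}$ is a nonnegative martingale with $\E{W_n}=1$. I would introduce the stopping line $\mathcal L_k = \{u\in\mathcal T : S_u\ge k,\ S_v<k\ \forall v\prec u\}$ and apply optional stopping along each lineage, then sum, to obtain $\E{\sum_{u\in\mathcal L_k} e^{\lambda^\star S_u}} \le 1$. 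Each summand is at least $e^{\lambda^\star k}$ and $\mathcal L_k\ne\emptyset$ iff $\m\ge k$, so Markov's inequality yields $\p{\m\ge k}\le e^{-\lambda^\star k}$.

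For the lower bound, I would set $n = \lfloor c k^2\rfloor$ with $c$ chosen so that $k/\sqrt n$ lies in a fixed favorable range, and let $N_k$ count the particles $u\in\mathcal T_n$ with $S_u\in[k,k+1)$ and $S_v\in[0,k)$ for all strict ancestors $v\prec u$. The first step is the first-moment computation via the many-to-one lemma and the exponential tilt of density proportional to $e^{\lambda^\star S_n - n\Lambda(\lambda^\star)}$: using $\E{B}\cdot e^{\Lambda(\lambda^\star)}=1$ to absorb $\E{B}^n$, this gives $\E{N_k} = e^{-\lambda^\star k}\cdot\phatc{S_n\in[k,k+1),\ S_i\ge 0\ \forall i\le n}$. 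Under $\hat{\mathbf P}$ the walk has mean zero and finite variance, so a local CLT combined with a ballot-type estimate for centered walks conditioned to stay positive gives the tilted probability $\asymp 1/k$, hence $\E{N_k}\asymp e^{-\lambda^\star k}/k$.

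The next step is a second-moment bound via the many-to-two decomposition, splitting ordered pairs $(u,v)\in\mathcal T_n^2$ by the generation $j$ of their most recent common ancestor. After tilting, the common segment up to level $j$ becomes a centered meander from $0$ to height $x=S_j$, and the two continuations become conditionally independent centered meanders from $x$ to $[k,k+1)$ of length $n-j$ staying above $0$. Sharp ballot estimates (of Kesten/Caravenna--Chaumont type) and integration over $j$ and $x$ should show the dominant contribution comes from $j$ close to $n$, reproducing the diagonal term, so that $\E{N_k^2}=O(\E{N_k})$; Paley--Zygmund then delivers $\p{N_k\ge 1}\gtrsim e^{-\lambda^\star k}/k$. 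I expect this second-moment estimate to be the main obstacle: uniformly controlling the pair probabilities for intermediate $j$ and arbitrary split heights $x$ needs precisely the ballot-theorem and precise large-deviations machinery the paper advertises.

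Finally, to upgrade $\{N_k\ge 1\}$ to $\{\m = k\}$ I would select a distinguished particle $u^\star$ witnessing $N_k\ge 1$ and decompose the rest of the BRW into the subtree rooted at $u^\star$ and the sibling bushes along the ancestral path to $u^\star$. The subtree at $u^\star$ is an independent copy of the BRW shifted by $S_{u^\star}\in[k,k+1)$, so by the upper bound its max exceeds $k$ with probability bounded away from $1$. A sibling bush rooted at height $h<k$ exceeds $k$ with probability at most $e^{-\lambda^\star(k-h)}$; summing along the ancestral path, and using that under the ballot conditioning the path sits well below $k$ for most generations (so the tails telescope geometrically), the conditional expected number of ``bad'' bushes is $O(1)$ uniformly in $k$, and a Poisson-type bound then furnishes a constant conditional probability that no bush is bad. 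Combining the two estimates yields $\p{\m=k}\gtrsim\p{N_k\ge 1}\gtrsim e^{-\lambda^\star k}/k$, as required.
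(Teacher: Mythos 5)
Your upper bound is correct but follows a genuinely different route from the paper. You use the additive martingale $W_n=\sum_{u\in\mathcal T_n}e^{\lambda^\star S_u}$ (which indeed has mean $1$ since criticality plus $\Lambda'(\lambda^\star)=0$ forces $\E{B}e^{\Lambda(\lambda^\star)}=1$) together with the first-passage stopping line at level $k$; this gives $\p{\mb\ge k}\le e^{-\lambda^\star k}$ directly, whereas the paper first proves the weaker bootstrap bound $\p{\mb\ge k}=O(k^2e^{-\lambda^\star k})$ by size-biasing plus a ballot estimate, and then removes the polynomial factor via supermultiplicativity of $\ell\mapsto\p{\mb\ge\ell}$ and Fekete's lemma. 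Your argument is cleaner, provided you justify the stopping-line inequality $\E{\sum_{u\in\mathcal L_k}e^{\lambda^\star S_u}}\le 1$ (truncate at generation $n$ and apply Fatou).

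The lower bound, however, has a concrete quantitative gap. With $n=\lfloor ck^2\rfloor$ fixed and $N_k$ counting particles of a \emph{single} generation $n$ with $S_u=k$ and all ancestors in $[0,k)$, the many-to-one identity gives $\E{N_k}=e^{-\lambda^\star k}\,\p{\tilde S_n=k,\ 0\le\tilde S_i<k\ \forall i<n}$ for the \emph{centered} walk $\tilde S$, and by the two-barrier ballot estimate (Corollary~\ref{fnk}) this is $\Theta(e^{-\lambda^\star k}\cdot k/n^2)=\Theta(e^{-\lambda^\star k}/k^3)$, not $e^{-\lambda^\star k}/k$ as you claim (even dropping the upper barrier only gets you to $\Theta(k/n^{3/2})=\Theta(1/k^2)$; the $1/k$ you quote is the unconstrained local CLT). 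Since $\p{N_k\ge1}\le\E{N_k}$, no second-moment argument at a single generation can produce more than $\Omega(e^{-\lambda^\star k}/k^3)$, which misses the target by a factor of $k^2$. The missing idea is to sum over the $\Theta(k^2)$ generations $k^2\le n\le 2k^2$ with pairwise disjoint events; the paper engineers this via the events $\mathcal M_{k,n}$ (the maximum is $k$ and is first attained, level-wise, at generation $n$), each of probability $\Omega(ke^{-\lambda^\star k}/n^2)$. Two further remarks: the paper avoids your second-moment computation altogether, since its event $B_{k,n}$ contains at most one individual so its first moment \emph{equals} its probability; and your final "no bad sibling bush" step is where the hypothesis $\E{B\log^8B}<\infty$ must enter (the offspring sizes along the ancestral path of the witness are size-biased, and one needs the quantitative decay $S_{n-m}<k-m^{1/7}$ of the paper's "useful" walks, not just that the path is "well below $k$ for most generations," to make the product over bushes $\Omega(1)$).
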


Our approach in this document is rather orthogonal to the recursive one used by \citet{pemantle99killed}: using large deviations estimates for sums of i.i.d.\ random variables, we analyze the shape of the random walks along the branches of the process. This technique was also used in \citet{AdRe2007} to precisely analyse  minima in branching random walks. 
The large deviations estimates we require are stated in Theorem \ref{thm:ld_badahur}, below, and can be found in 
\cite{BaRa1960,DeZe1998}.
     
\medskip
\noindent\textsc{A notational interlude and an aside on size-biasing.}
For any tree $T$, deterministic or random, we write $T_n$ for the set of nodes of $T$ in the $n$'th generation. 
We use the notation  $T_{\leq n}$ for both $\bigcup_{i=0}^n T_i$ and for the subtree of $T$ on this set of nodes. (The ambiguity 
in the notation will always disappear in context.) 

The size-biasing technique introduced by \citet{KaPe1976} ---and used to study branching random walks in, for instance, \cite{ChRo1988,LyPePe95a,Lyons1997, BiKy2004}--- allows to formally pick a typical node in the $n$'th generation of a tree, and will be very useful in our calculations.  
We write $\hat{\mathcal T}$ for the {\em size-biased} version of $\mathcal T$, grown as follows. 
Let $\hat B$ be the size-biased version of $B$, with distribution defined by 
\[
\pc{\hat B = k} = \frac{ k \p{B=k}}{\e B}.
\]
Let $v_0$ be the root of $\hat{\mathcal T}$ and let $v_0$ have a random number of children chosen according to $\hat B$. 
Choose a child of $v_0$ uniformly at random ---say $v_1$. From all {\em other} children, 
grow independent branching processes with unbiased offspring distribution $B$. From $v_1$, independently produce a size-biased number of children, 
choose a uniform child to size-bias, and repeat ad infinitum. This process always yields an infinite tree, with a single distinguished path 
$(v_0,v_1,\ldots)$, the \emph{spine}. 

Let $\mu$ (resp.~$\hat{\mu}$) be the measure of $\mathcal T$ (resp.~$\hat{\mathcal T}$), and let $\hat{\mu}^\star$ be the joint measure of 
$\hat{\mathcal T}$ and $(v_0,v_1,\ldots)$. Let $[T]_{\le n}$ be the set of trees that agree with the tree $T$ on the first $n$ levels. For $v\in T_n$, let $[T,v]_{\le n}$ be the set of trees with a distinguished path agreeing with $T$ on the first $n$ levels, and with a spine going through $v$.
\citet{LyPePe95a} show that for all $n$ and all $T$, if $T_n \neq \emptyset$ then for all $v \in T_n$, 
\[
\hat{\mu}^* [T,v]_{\le n} = \frac{1}{(\e{B})^n} \mu [T]_{\leq n} \qquad \mbox{and} \qquad \hat{\mu}[T]_{\leq n} = \frac{|T_n|}{(\e{B})^n} \mu[T]_{\leq n}.
\]
For our purposes, the ordering of the children of a node within $\mathcal T$ will always be unimportant, 
and so 
we may equivalently imagine growing $\hat{\mathcal T}$ in the following way. 
Start from an infinite path $(v_0,v_1,\ldots)$ in $\mathcal U$ ---say the ``leftmost'' path 
$\emptyset, 1, 11$, and so on--- which will form the spine.
Independently give each node $v_i$ a random number $C_i$ of children off the spine, where $C_i$ has distribution 
$\hat{B}-1$, and start an independent branching process with offspring distribution $B$ from each node off the initial infinite path. 
We write $\phat{\,\cdot\,}$ for the probability operator corresponding to $\hat{\mathcal T}$.
We also write 
$\hat{\mathscr L}$ for the subtree of $\hat{\mathcal T}$ consisting only of living particles. We refer to both $\hat{\mathcal T}$ and $\hat{\mathscr L}$ as {\em tilted} trees. 

\medskip 
\noindent\textsc{A sketch of the approach.}
We first explain how we bound the expectation $\e Z$. 
Decomposing the tree by level yields
\begin{align}\label{eq:size-bias}
\E{Z}	 = \E{|\mathscr L|}
	& = \sum_{n \ge 0} \E{|\mathscr L_n|}\nonumber  \\
	& = \sum_{n \ge 0} \sum_{T \subseteq \mathcal U_{\le n}}  \sum_{v \in T_n} \probC{v \in \mathscr L_n}{\mathcal T_{\le n}= T} \cdot \mu [T]_{\le n} \nonumber \\
	& = \sum_{n \ge 0} (\e{B})^n \sum_{T \subseteq \mathcal U_{\le n}}\sum_{v \in T_n} \probC{v \in \mathscr L_n}{\mathcal T_{\le n}= T} \cdot \hat{\mu}^\star[T,v]_{\le n} \nonumber \\
	& = \sum_{n \ge 0} (\e{B})^n \cdot \phatc{v_n \in \hat{\mathscr{L}_n}}.
\end{align}
Proving that $\E{Z}< \infty$ thus amounts to proving upper bounds on $\phatc{v_n \in \hat{\mathscr{L}_n}}$. 
Letting $\{X_i, i\ge 0\}$ be a sequence of i.i.d.\ random variables distributed like $X$, we thus seek bounds on 
the probability that the random walk defined by $S_i:=\sum_{j=1}^i X_j$, $i=0, \dots, n$ stays positive. 
Two remarks are now in order: first, since $\E{X}<0$, the event $\{S_i \ge 0, i=1, \dots, n\}$ lies in the realm of large deviations; and second, controlling the probability that $v_n \in \hat{\mathcal{T}_n}$ 
amounts to estimating ``ballot-style'' sample path probabilities. Given that $S_n\ge 0$, large deviations bounds imply that $S_n$ most likely lies around zero \cite{DeZe1998}. We are then led to estimating $\Cprob{S_i \ge 0, i=1, \dots, n}{S_n=0}$, which we will see, satisfies
\[\Cprob{S_i \ge 0, i=1, \dots, n}{S_n=0} = \Theta\pran{\frac{1}n},\]
as for mean-zero random walks \cite{Bertrand1887,AdRe2008}. 

Proving that $\E{Z \log Z}=\infty$ turns out to be harder. As for the classical $x\log x$ moment condition of the Kesten--Stigum theorem \cite{KeSt66a,LyPePe95a,AtNe1972}, the phenomenon is due to a lack a concentration of the number of particles $Z$. This is why we are led to investigate events of very low probability to find a relevant lower bound on $\E{Z\log Z}$. The events we will consider ensure that there exists a particle $v$ with high enough position that its descendence is huge: indeed, despite the negative drift of the random walk the branching property makes sure that the collection of living particles is extremely large before the drift can send all descendants of $v$ back to a negative position. The kind of ``high position'' particles we require for our proof will have displacement roughly $k=\Theta(\sqrt n)$, where $n$ is the generation of the particle. To prove lower bounds on the probability that such a particle exists, we add extra constraints which ensure the concentration of the number of such particles, then use the second moment method \cite{AlSp2008,ChEr1952}. The events we will be interested in are roughly of the form 
\[\{0<S_i<k \mbox{~for~}i=1,\dots, n, \mbox{~and~} S_n=k\},\]
and estimating their probabilities amounts to deriving sample path probability estimates for random walks with two barriers. 
 
\medskip
\noindent\textsc{Motivation and related work.}
The model arose from research on combinatorial optimization in trees. One is given a complete tree, a binary tree say, and is asked to devise an algorithm that would find the large values of $S_v$ \cite{KaPe1983,Aldous1992a,Aldous1998b,Pemantle2007a}. A natural idea for an algorithm is to intentionally not explore the subtrees rooted at nodes with too small a value, the negative ones, say. The first natural question is then that of the survival probability of the algorithm, since it might be stuck early despite the presence of nodes with large values deeper in the tree. \citet*{GaHuSh2009} settle the question about the scaling behavior of the survival probability in the near critical case, $R_n/n\to \epsilon$, as $n\to\infty$ then letting $\epsilon\to0$. The analogous continuous model of branching Brownian motion with absorption has been studied by \citet{Kesten1978} and by~\citet{HaHa2007}. For similar analysis from a statistical physics perspective, see \cite{DeSi2007,SiDe2008}. 
   
\medskip
\noindent\textsc{Plan of the paper.}
In Section~\ref{sec:bahadur-rao}, we introduce the large deviations tools we will need for in the proofs of our main results. 
In Section~\ref{sec:ballot}, we state the results we require about sample paths for random walks.
In Section~\ref{sec:maximum}, we provide upper and lower tail bounds for the maximum position ever attained in the killed and un-killed critical branching random walks. 
Section~\ref{sec:ZlogZ} is devoted to the proof of Theorem~\ref{thm:critical}: we prove that $\E{Z}<\infty$ and $\E{Z\log Z}=\infty$. 
Finally, in Section \ref{sec:proofs-ballot}, we provide the proofs of the sample paths results. The analyses in this section are based on recent work by \citet{AdRe2008}. 
                 
\section{Precise large deviations}\label{sec:bahadur-rao}

Before going any further, we establish one assumption to which we adhere for the duration of the paper. 
We say $X$ is a {\em lattice random variable} with period $d>0$ 
if there is a constant $z \in \R$ such that $dX-z$ is almost surely integer-valued, and $d$ is the smallest positive real number 
for which this holds; in this case, we say that the set $\mathbb{L}_X=\{(n+z)/d:n \in \Z\}$ is {\em the lattice of} $X$. 
Technically, the analysis of the paper should have two cases, depending on whether or not $X$ is a lattice random variable. 
However, these cases are essentially identical, and the formulae are shorter for lattice random variables. 
{\em We thus assume from this point forward that the step size $X$ is a lattice random variable with lattice~$\Z$.} 

As in the introduction, we define the {\em logarithmic moment generating function} 
\[
\Lambda(t) = \Lambda_X(t) := \log \E{e^{tX}},
\]
and usually supress the $X$ in the subscript since it will be clear from context.
To better understand the utility of the function $\Lambda$ in deriving tail bounds, we first recall Chernoff's 
bounding technique \cite{chernoff52measure}. If $S_n = \sum_{i=1}^n X_i$ is a sum of $n$ independent copies of $X$, then for any
 $c > \E{X}$ and $t > 0$, by using Markov's inequality and independence, we have 
\begin{equation}
\p{S_n \geq cn} = \p{e^{tS_n} > e^{tcn}} \leq  \frac{\E{e^{tS_n}}}{e^{tcn}} = \pran{\E{e^{t(X-c)}}}^n =  e^{-n(tc-\Lambda(t))},\nonumber
\end{equation}
by definition of $\Lambda(t)$. We choose the value of $c$ that minimizes this upper bound: 
\begin{equation}\label{eq:ld_intuitive}
  \p{S_n \geq cn} \leq  \exp\pran{-n \sup_{t>0}\{tc -\Lambda(t)\}}.
\end{equation}
The optimal choice for $t$ in (\ref{eq:ld_intuitive}) is then that for which $\Lambda'(t)=c$ ---if such a $t$ exists--- as may be informally seen by differentiating $t\mapsto tc-\Lambda(t)$ with respect to $t$. Choosing $t$ in this fashion and writing 
$\Lambda'(t)$ in place of $c$ yields   
\begin{equation}\label{eq:ld_almosttight}
\p{S_n \geq \Lambda'(t)n} 
\le e^{-n(t\Lambda'(t)-\Lambda(t))}.  
\end{equation}
It turns out that the upper bound given by (\ref{eq:ld_almosttight}) is almost tight; this is the substance of 
the ``exact asymptotics for large deviations'' first proved by \citet{BaRa1960}, and is the reason that 
the behavior of $\Lambda$ is key to our investigation. We now formally introduce this result.

We now state a version of asymptotic estimates for large deviations, essentially appearing in \citet{DeZe1998} (as Theorem 3.7.4). The following notation will be convenient: for a parameter $C$, we write $O_C(\,\cdot\,)$, $\Omega_C(\,\cdot\,)$ and $\Theta_C(\,\cdot\,)$ to emphasize that the constants hidden in the asymptotic terms depend on~$C$.
\begin{thm}[\citet{BaRa1960}]\label{thm:ld_badahur}
Let $S=\{S_n\}_{n \in \N}$ be a random walk with integer step size $X$, and define $\Lambda$ and $\mathcal D_{\Lambda}^o$ as above. 
Choose any $\lambda \in \mathcal D_{\Lambda}^o$ with $\lambda > 0$ and any constant $C > 0$. 
Then for any $a \in \Z$ with $|a| \leq C\sqrt n$, 
\begin{equation}\label{eq:ld_nonlat}
  \p{S_n = \Lambda'(\lambda)n+a} = \Theta_C(1) \cdot\frac{e^{-a\lambda-nf(\lambda)}}{\sqrt{\Lambda''(\lambda)\cdot 2\pi n}} = \p{S_n \geq \Lambda'(\lambda)n+a}.
\end{equation} 
\end{thm}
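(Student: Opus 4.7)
The strategy is exponential tilting plus a local central limit theorem, which is the standard route to Bahadur--Rao. I would proceed in three steps.

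First, introduce the tilted measure $\tilde{\mathbf{P}}_\lambda$ whose density with respect to $\mathbf{P}$ on $\sigma(X_1,\dots,X_n)$ is $\exp(\lambda S_n - n\Lambda(\lambda))$. Under $\tilde{\mathbf{P}}_\lambda$, the $X_i$ remain i.i.d., now with mean $\Lambda'(\lambda)$ and variance $\Lambda''(\lambda)$, and the lattice of $X$ is preserved (since tilting only reweights the point masses, not their support). For any $k \in \mathbb{Z}$,
\[
\mathbf{P}(S_n = k) = \exp\bigl(-\lambda k + n\Lambda(\lambda)\bigr)\, \tilde{\mathbf{P}}_\lambda(S_n = k).
\]
Setting $k = \Lambda'(\lambda) n + a$ and using the identity $-\lambda \Lambda'(\lambda) + \Lambda(\lambda) = -f(\lambda)$, this rearranges to
\[
\mathbf{P}(S_n = \Lambda'(\lambda) n + a) = e^{-a\lambda - n f(\lambda)} \cdot \tilde{\mathbf{P}}_\lambda(S_n = \Lambda'(\lambda) n + a).
\]
So the exact exponential factor in the theorem comes out for free; everything reduces to estimating a lattice local probability at a point within $C\sqrt{n}$ of the mean.

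Second, apply the lattice local central limit theorem under $\tilde{\mathbf{P}}_\lambda$. Since $X \in \mathcal{D}_\Lambda^o$, all moments of $X$ under the tilt are finite (the tilted moment generating function is smooth in a neighborhood of $0$), so the Gnedenko local CLT applies and gives
\[
\tilde{\mathbf{P}}_\lambda(S_n = \Lambda'(\lambda) n + a) = \frac{1}{\sqrt{2\pi \Lambda''(\lambda) n}} \exp\!\left(-\frac{a^2}{2\Lambda''(\lambda) n}\right) + o\!\left(\frac{1}{\sqrt{n}}\right),
\]
uniformly in $a \in \mathbb{Z}$. For $|a| \le C\sqrt{n}$ the Gaussian factor is pinched between two positive constants depending only on $C$ and $\Lambda''(\lambda)$, so the right-hand side is $\Theta_C(1)/\sqrt{2\pi \Lambda''(\lambda) n}$. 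Combined with the first step, this yields the pointwise estimate.

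Third, upgrade to the tail. Write
\[
\mathbf{P}(S_n \ge \Lambda'(\lambda) n + a) = \sum_{a' \ge a,\, a' \in \mathbb{Z}} \mathbf{P}(S_n = \Lambda'(\lambda) n + a').
\]
For a sufficiently large constant $C' = C'(C,\lambda)$, split the sum at $a' = C'\sqrt{n}$. For the bulk $a \le a' \le C'\sqrt{n}$, the pointwise estimate just obtained makes each summand $\Theta_{C,C'}(1) \cdot e^{-\lambda a'} / \sqrt{n}$ (up to a bounded Gaussian correction); summing the geometric series in $a'$ gives $\Theta_C(e^{-\lambda a})/\sqrt{n}$, matching the claimed order. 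For the far tail $a' > C'\sqrt{n}$, a crude Chernoff bound at the same tilt parameter $\lambda$ yields a contribution that is $o(e^{-\lambda a - n f(\lambda)}/\sqrt{n})$, provided $C'$ is chosen large enough relative to $C$; here one uses that the extra mass $e^{-\lambda(a'-a)}$ summed against a Gaussian of width $\sqrt{n}$ is overwhelmed by the quadratic decay once $a'/\sqrt{n}$ is large.

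The main obstacle is the tail bookkeeping in the third step: one must show that the classical large-deviations upper bound kicks in uniformly once $|a'|$ leaves the $\sqrt{n}$-window, so that the asymptotic constant is genuinely governed by the bulk sum. The LCLT input is standard in the i.i.d. lattice setting and requires no serious work beyond checking that the tilted step has the same lattice as $X$.
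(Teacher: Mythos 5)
Your proof is correct, and its backbone --- the exponential change of measure that converts the large-deviation event into a central-regime estimate for the tilted walk, producing the exact prefactor $e^{-a\lambda-nf(\lambda)}$ --- is precisely the strategy the paper attributes to Bahadur--Rao and carries out itself in the derivation of (\ref{eq:useful}) and in Lemma~\ref{lem:transfer}. Where you diverge is the final analytic input: the paper (following \cite{DeZe1998}, Theorem 3.7.4) handles the tail probability directly by an integration by parts of the tilted integral against the Berry--Ess\'een bound, whereas you invoke a lattice local central limit theorem for the point mass and then recover the tail by summing the resulting geometric series. For the lattice formulation stated here --- where the point probability $\p{S_n=\Lambda'(\lambda)n+a}$ is itself one of the two asserted quantities --- your route is arguably the more natural one, and the uniform local limit theorem you need for $|a|\le C\sqrt n$ is standard (cf.\ Theorem~\ref{thm:stone}, applied after centering the tilted walk; the only point to check is the one you note, that tilting preserves the lattice). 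Your ``main obstacle,'' the far-tail bookkeeping, is in fact easier than you suggest: for $a'>C'\sqrt n$ one may simply bound the tilted point probability by $1$, and
\[
\sum_{a'>C'\sqrt n}e^{-\lambda a'}\;\le\;\frac{e^{-\lambda C'\sqrt n}}{1-e^{-\lambda}}\;=\;o\!\left(\frac{e^{-\lambda a}}{\sqrt n}\right)
\]
once $C'>C$, since $e^{-\lambda a}\ge e^{-\lambda C\sqrt n}$; no Chernoff estimate is required, and for the lower bound on the tail the single term $a'=a$ already suffices.
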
   
This theorem is stated with $a$ constant in \cite{DeZe1998}, but a straightforward 
modification yields the above formulation. (See also \cite{Petrov1965} and \cite[][Chapter VIII, p.\ 248]{petrov75sums} for an even stronger, uniform version of this result, stated in slightly different language.) 
A similar result holds in the non-lattice case, if we replace the event $\{S_n=\Lambda'(\lambda)n+a\}$ with 
$\{(S_n-\Lambda'(\lambda)n-a) \in [0,c]\}$ for an arbitrary fixed positive constant $c$. This is the version we would use if we were to explicitly treat the non-lattice case.

The proof of Theorem~\ref{thm:ld_badahur} consists in an exponential change of measure (in order to be able to work with centered random variables) combined with the Berry--Ess\'een extension of the central limit theorem \cite{Berry1941,Esseen1963, Feller1971}. 
The same change of measure will be more generally useful to us, and we take a moment to explain it in detail and derive some easy consequences. 

Let $F$ be the distribution function of $X$. We remark that for $\lambda \in \mathcal D^o$, 
\[
\Lambda'(\lambda) = \frac{\E{Xe^{\lambda X}}}{\E{e^{\lambda X}}}
\qquad
\mbox{ and }
\qquad \Lambda''(\lambda) = \frac{\E{X^2e^{\lambda X}}}{\E{e^{\lambda X}}}-
	\bigg(\frac{\E{Xe^{\lambda X}}}{\E{e^{\lambda X}}}\bigg)^2.
\]
Consider the random variable $Y_\lambda$, with distribution function $G_\lambda$ defined by  
\[
G_{\lambda}(x) = \frac{1}{\E{e^{\lambda X}}} \int_{-\infty}^x e^{\lambda x}\,dF(x).
\]
Then, we have 
\begin{align}\label{eq:y_exp}
\E{Y_{\lambda}} = \int_{-\infty}^{\infty} x\,dG_{\lambda}(x) &= \frac{1}{\E{e^{\lambda X}}} \int_{-\infty}^{\infty} xe^{\lambda x}\,dF(x) = \Lambda'(\lambda),  \\ 
\E{Y_{\lambda}^2} = \int_{-\infty}^{\infty} x^2\,dG_{\lambda}(x)&= \frac{1}{\E{e^{\lambda X}}}\int_{-\infty}^{\infty} x^2e^{\lambda x}\,dF(x) = \Lambda''(\lambda)+\Lambda'(\lambda)^2,  \nonumber
\end{align}
so the random variable $Z_\lambda=Y_\lambda-\Lambda'(\lambda)$ is such that $\E{Z_\lambda}=0$ and $\V{Z_{\lambda}} = \V{Y_\lambda}=\Lambda''(\lambda)$. 
It may also easily be checked that $\E{|Y_{\lambda}|^3} < \infty$, a fact we will use later. 

We now show that we may express the probability of events such as $\{X_1+\dots +X_n \ge c n\}$, which belong to the large deviations regime for $c>\E{X}$, in terms of the distribution of the sum $Z_1+\dots+Z_n$ of i.i.d.\ copies of $Z_\lambda$ in the central regime. This allows for the use of precise limit results related to the central limit theorem.

Let $S_n = X_1+\ldots+X_n$ and fix any $a\in \R$. Then for any $c$ and any $\lambda \in \mathcal D_{\Lambda}^o$, using the same change of measure as in (\ref{eq:y_exp}), we have  
\begin{eqnarray}
	\p{S_n\geq cn+a} 	& = & \int_{\{x_1+\dots+x_n \geq cn+a\}}\,dF(x_1)\cdots dF(x_n) \nonumber\\
					& = & e^{n\Lambda(\lambda)} \int_{\{y_1+\ldots+y_n\geq tn+a\}} e^{-\lambda(y_1+\ldots+y_n)}\,dG_{\lambda}(y_1)\cdots dG_{\lambda}(y_n). \nonumber
\end{eqnarray}
The centered random variable $Z_\lambda$ has distribution function $H_\lambda$ satisfying $dH_\lambda(z) = e^{\lambda \Lambda'(\lambda)} dG_\lambda(z)$. 
So, taking $c = \Lambda'(\lambda)$, this further change of measure yields 
\begin{align*}
\p{S_n\geq \Lambda'(\lambda)n+a}  
& =  e^{n\Lambda(\lambda)} \int_{\{z_1+\ldots+z_n \geq a\}} e^{-\lambda(z_1+\ldots+z_n)}e^{-\lambda \Lambda'(\lambda)n}\,dH_{\lambda}(z_1)\cdots dH_{\lambda}(z_n) \nonumber\\
& =  e^{-nf(\lambda) } \int_{\{z_1+\ldots+z_n \geq a\}} e^{-\lambda(z_1+\ldots+z_n)}\,dH_{\lambda}(z_1)\cdots dH_{\lambda}(z_n).
\end{align*} 
Writing $W_n$ for the distribution function of $Z_1+\ldots+Z_n$, $n$ i.i.d.\ copies of $Z_\lambda$, the preceding equation asserts that 
\begin{equation}\label{eq:useful}
\p{S_n\geq \Lambda'(\lambda)n+a} = e^{-nf(\lambda) } \int_{-\infty}^{\infty} e^{-\lambda s} \I{s \geq a} \,dW_n(s).
\end{equation}
To prove the Bahadur--Rao theorem from Equation (\ref{eq:useful}) is a matter of an integration by parts, followed by an application of the Berry--Ess\'een bound between the distribution of a rescaled sum of i.i.d.\ random variables and a normal \cite{Berry1941, Esseen1963, Feller1971}. (See \cite{DeZe1998} for details.)
The useful thing about the chain of argument leading to (\ref{eq:useful}) is that we may apply it when studying other events than $\{S_n \geq \Lambda'(\lambda) n + a\}$. This is especially useful when $\Lambda'(\lambda)=0$, i.e. when $\lambda=\lambda^\star$, since in this case, the events we are considering on the left and right-hand side are identical. In particular, we shall use the following lemma to transfer the results of the next section, about sample paths of centered random walks, into the large deviation regime. 

\begin{lem}\label{lem:transfer}Let $\tilde S_n=\sum_{i=1}^n Z_i$, where $Z_1,\dots, Z_n$ are i.i.d.\ copies of the centered random variable $Z_{\lambda^\star}$ described above. Then, for all integer $n$ and Borel set $\mathscr B \subseteq \R^n$, 
$$\p{(S_1,\ldots,S_n) \in \mathscr B}=e^{-nf(\lambda^\star)} \cdot \E{\exp(-\lambda^\star \cdot \tilde S_n) \cdot \I{(\tilde S_1,\ldots,\tilde S_n) \in \mathscr B}}.$$
\end{lem}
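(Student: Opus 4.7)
The plan is to repeat the exponential change of measure leading to equation~(\ref{eq:useful}), but applied to the full $n$-tuple $(X_1,\ldots,X_n)$ rather than just to their sum. The key observation is that the cumulative sums $(S_1,\ldots,S_n)$ are a fixed deterministic function of $(X_1,\ldots,X_n)$, and the same function applied to $(Z_1,\ldots,Z_n)$ produces $(\tilde S_1,\ldots,\tilde S_n)$, so a tilting on the coordinate level automatically transfers the event to the one we want.

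More concretely, I would first write
\[
\p{(S_1,\ldots,S_n)\in\mathscr B} = \int \I{(x_1,\,x_1+x_2,\,\ldots,\,x_1+\cdots+x_n)\in\mathscr B}\, dF(x_1)\cdots dF(x_n),
\]
using independence of the $X_i$. Then I invert the defining relation of $G_\lambda$ to get the per-coordinate identity $dF(x)=e^{-\lambda x+\Lambda(\lambda)}\,dG_\lambda(x)$, and apply it with $\lambda=\lambda^\star$ in each of the $n$ factors. This produces
\[
e^{n\Lambda(\lambda^\star)}\int \I{(y_1,\,y_1+y_2,\,\ldots,\,y_1+\cdots+y_n)\in\mathscr B}\, e^{-\lambda^\star(y_1+\cdots+y_n)}\,dG_{\lambda^\star}(y_1)\cdots dG_{\lambda^\star}(y_n).
\]

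Two bookkeeping remarks close the argument. First, since the branching random walk is well-controlled, $\Lambda'(\lambda^\star)=0$, so the centering is trivial: $Z_{\lambda^\star}=Y_{\lambda^\star}-\Lambda'(\lambda^\star)=Y_{\lambda^\star}$, and hence the laws $H_{\lambda^\star}$ and $G_{\lambda^\star}$ coincide. The above integral is therefore exactly $\E{e^{-\lambda^\star\tilde S_n}\,\I{(\tilde S_1,\ldots,\tilde S_n)\in\mathscr B}}$, because the partial sums of the i.i.d.\ sequence with law $G_{\lambda^\star}$ are by definition $(\tilde S_1,\ldots,\tilde S_n)$. Second, $f(\lambda^\star)=\lambda^\star\Lambda'(\lambda^\star)-\Lambda(\lambda^\star)=-\Lambda(\lambda^\star)$, so the prefactor $e^{n\Lambda(\lambda^\star)}$ is exactly $e^{-nf(\lambda^\star)}$, matching the target formula.

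I do not expect any genuine obstacle: the lemma is essentially a restatement of the one-dimensional tilting identity~(\ref{eq:useful}) at the level of paths, with the parameter specialized to $\lambda=\lambda^\star$ so that the centering step collapses. The only mild care needed is to verify that the indicator of $\{(S_1,\ldots,S_n)\in\mathscr B\}$ translates correctly under the change of variables $x_i\leftrightarrow y_i$, which it does because the map from increments to partial sums is a fixed linear bijection independent of the underlying measure.
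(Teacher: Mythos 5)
Your proof is correct and is exactly what the paper intends: the authors omit the proof, remarking only that it ``consists in mimicking the argument leading to (\ref{eq:useful})'', and your coordinate-wise tilting with the observations that $\Lambda'(\lambda^\star)=0$ (so $H_{\lambda^\star}=G_{\lambda^\star}$) and $f(\lambda^\star)=-\Lambda(\lambda^\star)$ is precisely that argument carried out at the level of the full path.
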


The proof of Lemma \ref{lem:transfer} consists in mimicking the argument leading to (\ref{eq:useful}) and we omit it. 
To conclude this section, we state the useful inequality of \citet{chernoff52measure}, a weaker but simpler version of the above asymptotic result. 
\begin{lem}[Chernoff bound]\label{lem:chernoff}
For all positive $\lambda \in \mathcal D^o$, for all $a> 0$ and all integers $n\geq 1$, 
\[\p{S_{n}\geq \Lambda'(\lambda)n+a} \leq e^{-n(\lambda \Lambda'(\lambda)-\Lambda(\lambda ))-a\lambda}=e^{-n f(\lambda)-a \lambda}.\]
\end{lem}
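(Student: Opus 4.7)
The plan is to give the standard Chernoff argument, a cleaner version of the informal derivation already carried out in equations (\ref{eq:ld_intuitive})--(\ref{eq:ld_almosttight}). Since $\lambda > 0$, the map $x \mapsto e^{\lambda x}$ is strictly increasing, so the event $\{S_n \geq \Lambda'(\lambda) n + a\}$ coincides with $\{e^{\lambda S_n} \geq e^{\lambda(\Lambda'(\lambda)n + a)}\}$. Markov's inequality then bounds the probability of the latter by $\E{e^{\lambda S_n}} \cdot e^{-\lambda(\Lambda'(\lambda) n + a)}$.

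The next step is to expand $\E{e^{\lambda S_n}}$. Writing $S_n = X_1 + \cdots + X_n$ with the $X_i$ i.i.d.\ copies of $X$, independence yields
\[
\E{e^{\lambda S_n}} = \prod_{i=1}^n \E{e^{\lambda X_i}} = \E{e^{\lambda X}}^n = e^{n\Lambda(\lambda)},
\]
by the definition of $\Lambda$. For this to be finite (and hence the bound non-trivial) we need $\lambda \in \mathcal D$, which holds since the hypothesis places $\lambda$ in the interior $\mathcal D^o$.

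Combining these two steps gives
\[
\p{S_n \geq \Lambda'(\lambda) n + a} \leq e^{n\Lambda(\lambda)} \cdot e^{-\lambda\Lambda'(\lambda) n - \lambda a} = e^{-n(\lambda \Lambda'(\lambda) - \Lambda(\lambda)) - a\lambda},
\]
and recognising $\lambda \Lambda'(\lambda) - \Lambda(\lambda) = f(\lambda)$ from the definition given in the introduction produces the stated form $e^{-nf(\lambda) - a\lambda}$. There is essentially no obstacle: the only thing to verify is that all quantities involved are finite, which is guaranteed by $\lambda \in \mathcal D^o$; the strict inequality $a > 0$ is not actually needed for the argument, but ensures the bound is sharper than the one obtained by setting $a = 0$.
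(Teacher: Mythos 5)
Your proof is correct and is exactly the standard Chernoff argument that the paper itself sketches at the start of Section~\ref{sec:bahadur-rao} (equations (\ref{eq:ld_intuitive})--(\ref{eq:ld_almosttight})); the paper states the lemma without a separate proof precisely because it follows from that derivation with the extra shift $a$ carried through. Nothing is missing.
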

                   
\section{The shape of random walks}\label{sec:ballot}

In this section, we collect the facts about sample path probabilities that we will require for the proofs of the main results. 
Throughout the section, $X$ is a random variable with lattice $\Z$, with $\E{X}=0$ and $0 < \E{X^2} < \infty$, 
and $S$ is a simple random walk with step size $X$. 
\citet{AdRe2008} proved the following theorem. 
\begin{thm}\label{thm:ballot_mean0}
	Fix $c > 0$. 
	Then for all $n$ and for all $k$ and $m$ with $0 < k \leq c\sqrt{n}$ and $0 \le m \leq c\sqrt{n}$, 
	\[\p{S_n=k, S_i \ge -m ~\forall~ 0 < i < n} = \Theta_{c}\pran{\frac{(m+1)(k+m+1)}{n^{3/2}}}.\]
\end{thm}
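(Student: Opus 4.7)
The plan is to prove Theorem~\ref{thm:ballot_mean0} by reducing the problem to a single-barrier endpoint-pinned ballot estimate and then integrating over the midpoint of the sample path. First I would perform the trivial shift: setting $T_i = S_i + m$, the event becomes $\{T_0 = m,\ T_n = k+m,\ T_i \ge 0\ \text{for}\ 0 < i < n\}$. The question thus reduces to estimating the probability that a mean-zero random walk starting at $m \ge 0$ and ending at $k+m > 0$ in $n$ steps never becomes negative.

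The central ingredient is a one-barrier ballot estimate of the following type: for any mean-zero, finite-variance, $\Z$-valued random walk and any $0 \le a, b \le c'\sqrt{N}$,
\[
\Cprob{T_N = b,\ T_i \ge 0\ \forall\ 0 < i < N}{T_0 = a} = \Theta_{c'}\pran{\frac{(a+1)(b+1)}{N^{3/2}}},
\]
which is precisely the kind of statement developed by \citet{AdRe2008}. Taking this as granted, I would split the path at $n/2$ and decompose over the midpoint value $l = T_{n/2}$:
\[
\Cprob{T_n = k+m,\ T_i \ge 0\ \forall\ 0 < i < n}{T_0 = m} = \sum_{l \ge 0} A(l)\cdot B(l,k,m),
\]
where $A(l) = \Cprob{T_{n/2} = l,\ T_i \ge 0\ \forall\ 0 < i \le n/2}{T_0 = m}$ is directly $\Theta((m+1)(l+1)/n^{3/2})$ by the single-barrier estimate, and $B(l,k,m) = \Cprob{T_n = k+m,\ T_i \ge 0\ \forall\ n/2 \le i < n}{T_{n/2} = l}$ is handled by time reversal: setting $U_j := (k+m) - (T_n - T_{n-j})$, the sequence $(U_j)_{j=0}^{n/2}$ is a random walk with step distribution $-X$ that starts at $k+m$, ends at $l$, and is subject to $U_j \ge 0$ throughout. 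Since $-X$ has the same mean and variance as $X$, the single-barrier estimate applies again to give $B(l,k,m) = \Theta((k+m+1)(l+1)/n^{3/2})$.

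Putting this together, the total contribution is
\[
\sum_{l \ge 0} \Theta\pran{\frac{(m+1)(k+m+1)(l+1)^2}{n^3}}.
\]
For the upper bound, a Chernoff-type argument (Lemma~\ref{lem:chernoff}, applied to the unconstrained walk over $n/2$ steps) confines the relevant $l$ to $O(\sqrt{n})$ up to negligible error, and then $\sum_{l=0}^{O(\sqrt{n})} (l+1)^2 = \Theta(n^{3/2})$ delivers the target $\Theta((m+1)(k+m+1)/n^{3/2})$. For the lower bound, I would restrict the sum to a window $l \in [c_1\sqrt{n}, c_2\sqrt{n}]$, with $c_1, c_2$ depending only on $c$; each of the $\Theta(\sqrt{n})$ terms in this window contributes $\Theta((m+1)(k+m+1)/n^2)$, producing the matching lower bound.

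The main obstacle is establishing the single-barrier estimate with implicit constants that are genuinely uniform in both endpoints over the whole range $[0, c'\sqrt{N}]$; extracting this level of uniformity from the Addario-Berry-Reed framework requires care in controlling where the path concentrates and in handling the boundary cases $a = 0$ or $b = 0$. A secondary subtlety is ensuring that the tail of the midpoint sum is controlled under the constrained law and not merely the free law: since imposing the barrier only shifts mass away from extreme values of $l$, a union bound using the unconstrained Chernoff estimate does suffice, but this comparison deserves explicit justification.
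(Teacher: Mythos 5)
Your argument is circular. After the shift $T_i = S_i + m$, the statement of Theorem~\ref{thm:ballot_mean0} is exactly ``a mean-zero walk started at $a=m$ and pinned to end at $b=k+m$ stays nonnegative with probability $\Theta_{c}((a+1)(b+1)/n^{3/2})$,'' uniformly for $a,b = O(\sqrt n)$. This is precisely the ``central ingredient'' you declare you will take as granted before splitting the path at $n/2$. If that ingredient were available for general $a$ and $b$, the theorem would follow from the shift alone and the midpoint decomposition would be redundant; if it is not available, your decomposition cannot produce it, because both halves $A(l)$ and $B(l,k,m)$ again involve a walk with \emph{two} general endpoints ($m$ to $l$, and $k+m$ to $l$ after reversal), so neither reduces to the $a=0$ case. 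What \citet{AdRe2008} actually prove --- as the paper states --- is only the case $m=0$ (walk started at the barrier), and the entire content of the theorem is the extension to a barrier at $-m$ with the extra factor $(m+1)$. You flag exactly this as ``the main obstacle'' in your closing paragraph, but flagging it is not resolving it: the obstacle \emph{is} the theorem.

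The paper's route is different and non-circular: it does not re-derive the result from the $m=0$ case by concatenation, but observes that the proof in \citet{AdRe2008} goes through verbatim with the barrier moved to $-m$, the essential input being Lemma~\ref{lem:pem_per} (Pemantle's one-sided estimates $\p{N_h \ge n} = \Theta((h+1)/\sqrt n)$ for a walk started at general height $h$, together with the second-moment control of the conditioned endpoint), combined with a local limit theorem such as Theorem~\ref{thm:stone}. That is the mechanism by which the factor $(m+1)$ enters, and it is the piece your proposal is missing. A secondary problem: your upper bound invokes a Chernoff estimate to truncate the sum over the midpoint value $l$, but Theorem~\ref{thm:ballot_mean0} assumes only $\E{X}=0$ and $\E{X^2}<\infty$, so no exponential moments are available; you would need a second-moment (Chebyshev/Kolmogorov-type) truncation, or the observation that the return factor $B(l,k,m)$ itself decays for $l \gg \sqrt n$, neither of which you supply.
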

In fact, in \citep{AdRe2008} the theorem was only stated with $m=0$ but an essentially identical proof yields the above formulation.
The following theorem strengthens Theorem~\ref{thm:ballot_mean0}, under the additional assumption that $\E{|X|^3} < \infty$. 
It essentially says that an upper barrier lying $\Theta(\sqrt n)$ above the ending height of the conditioned path does not significantly constrain 
the walk. (The assumption that $\E{|X|^3} < \infty$ is not in fact necessary for any of the below theorems and corollaries, 
but it simplifies the proofs, and it will hold for the random walks to which we apply the results since they are well-controlled.)
\begin{thm}\label{thm:gbt_new}
	Fix $c > 0$ and $\epsilon > 0$. If $\E{|X|^3} <\infty$ then for all $n$ and all $k$ and $m$ with 
	$0 \leq m \leq c\sqrt{n}$ and $-m \leq k \leq c\sqrt{n}$, 
	\[\p{S_n=k, -m \leq S_i \le \max\{k,0\}+\epsilon\sqrt{n}~\forall~ 0 < i < n}  = \Theta_{c,\epsilon}\left(\frac{(k+m+1)(m+1)}{n^{3/2}}\right).\]
\end{thm}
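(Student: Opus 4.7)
The upper bound is immediate from Theorem~\ref{thm:ballot_mean0}, since the event in Theorem~\ref{thm:gbt_new} is contained in the one there. All the work is in the matching lower bound: I must show that imposing the upper barrier $M:=\max\{k,0\}+\epsilon\sqrt n$ reduces the ballot probability by at most a constant factor depending on $c$ and $\epsilon$.

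The plan is to decompose by the midpoint value $j=S_{n_1}$ with $n_1=\lfloor n/2\rfloor$, restricting to $j$ in the ``comfortable'' range $J:=[K,K+\epsilon\sqrt n/4]\cap\Z$, where $K:=\max\{k,0\}$. For $j\in J$ the event factors as a product of two independent half-events: a walk of length $n_1$ going from $0$ to $j$ inside $[-m,M]$, and an independent walk of length $n-n_1$ going from $0$ to $k-j$ inside $[-m-j,M-j]$. For $j\in J$ both endpoints of each half lie at least $3\epsilon\sqrt n/4$ below the upper barrier and at most $O_{c,\epsilon}(\sqrt n)$ above the lower barrier, so each half is a bridge--ballot problem with ``room to spare'' inside a slab of width $\Theta(\sqrt n)$. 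Moreover $|J|=\Theta_\epsilon(\sqrt n)$.

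The crux is an auxiliary lemma: for any $c',\epsilon'>0$ there is $\delta=\delta(c',\epsilon')>0$ such that for all $n$ and all integers $a\ge 0$, $b$ with $-a\le b$ and $a+|b|\le c'\sqrt n$,
\[
\Cprob{S_i\le \max\{0,b\}+\epsilon'\sqrt n\ \forall\,0<i<n}{S_n=b,\ S_i\ge -a\ \forall\,0<i<n}\ \ge\ \delta.
\]
That is, conditional on the ballot event, the maximum of the walk stays within $\epsilon'\sqrt n$ of the higher endpoint with probability at least $\delta$. I would prove this via a Donsker-type invariance principle: rescaled by $\sqrt n$, the ballot-conditioned walk converges weakly to a Brownian bridge from $0$ to $b/\sqrt n$ conditioned to stay above $-a/\sqrt n$, whose maximum stays below any fixed level above both endpoints with positive probability. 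The hypothesis $\E{|X|^3}<\infty$ provides Berry--Ess\'een-type control that makes the convergence uniform in $a,b$ over the compact range $a+|b|\le c'\sqrt n$. Alternatively, one can argue inductively through a dyadic midpoint refinement, controlling the maximum over sub-intervals of geometric sizes by Theorem~\ref{thm:ballot_mean0} and summing the resulting bounds.

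Granted the lemma, each half-probability $A_j$, $B_j$ is bounded below by a constant times its no-upper-barrier counterpart. Applying Theorem~\ref{thm:ballot_mean0} (or its straightforward extension allowing $k\le 0$) to each half gives
\[
A_j\,\gtrsim_{c,\epsilon}\,\frac{(m+1)(m+j+1)}{n^{3/2}},\qquad
B_j\,\gtrsim_{c,\epsilon}\,\frac{(m+j+1)(k+m+1)}{n^{3/2}}.
\]
Summing $A_j B_j$ over $j\in J$, using $|J|\asymp_\epsilon\sqrt n$ and that $m+j+1$ varies over a range comparable to $\sqrt n + m + K$, a short calculation yields a lower bound of the required order $(m+1)(k+m+1)/n^{3/2}$.

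\textbf{Main obstacle.} The principal technical hurdle is the auxiliary lemma, and in particular its uniformity in the endpoint parameters. The Brownian-limit route is conceptually clean but the ballot event is a Brownian null event, so the transfer of the conditioning from walk to limit must be handled with care and with a rate. The elementary alternative via dyadic midpoint decomposition avoids Brownian limits entirely, at the cost of heavier bookkeeping to combine ballot estimates on $O(\log n)$ sub-intervals while keeping track of the lower-barrier condition.
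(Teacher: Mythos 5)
Your reduction has a genuine gap at its core: the ``auxiliary lemma'' you isolate is not a stepping stone toward the theorem --- it \emph{is} the theorem. Indeed, multiplying your conditional bound by the unconstrained ballot probability from Theorem~\ref{thm:ballot_mean0} (with $a=m$, $b=k$, $\epsilon'=\epsilon$) immediately yields the claimed lower bound for the full walk, with no midpoint decomposition needed; conversely, the lemma is exactly the assertion that the ratio of the two-barrier probability to the one-barrier probability is bounded below. So the midpoint split over $j\in J$ is redundant, and all of the actual difficulty has been relocated into a statement you only sketch. Neither sketch closes the gap: the invariance-principle route requires a functional limit theorem for the \emph{ballot-conditioned} bridge, uniform over endpoints $a,b$ ranging up to $c'\sqrt{n}$ (including $a=O(1)$, where the conditioned walk does not converge to a conditioned Brownian bridge in any naive sense but to a meander/excursion-type limit), and you yourself flag that the conditioning is on a Brownian null event; the dyadic alternative is asserted without any indication of how the lower-barrier bookkeeping across $O(\log n)$ scales would be controlled. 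As it stands, the proposal restates the theorem rather than proving it.

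For comparison, the paper avoids any conditioned limit theorem by constructing the event from three independent pieces of the path: the first $\lfloor\alpha n\rfloor$ steps are forced up to a window $[k+4\delta\sqrt{n},\,k+7\delta\sqrt{n}]$ while respecting both barriers (cost $\Theta((m+1)/\sqrt{n})$, obtained from Lemma~\ref{lem:pem_per}, the FKG inequality, an overshoot bound using $\E{|X|^3}<\infty$, and the unconditioned Donsker estimate of Lemma~\ref{lem:easy_donsker}); the reversed walk is treated symmetrically (cost $\Theta((k+m+1)/\sqrt{n})$); and the middle portion is matched up via Stone's local limit theorem, with the barrier violations in the middle excluded by Kolmogorov's maximal inequality (cost $\Theta(1/\sqrt{n})$). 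If you want to salvage your plan, the honest route is to prove your auxiliary lemma by some such explicit path-surgery argument --- at which point you would have reproduced the paper's proof and the midpoint decomposition could be discarded.
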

From Theorems~\ref{thm:ballot_mean0} and~\ref{thm:gbt_new}, the key bounds we require later in the paper follow straightforwardly. 
\begin{cor}\label{fnk}
Fix $c > 1$. If $\E{|X|^3}< \infty$, then for any $n$ and all $k$ with $c^{-1} \le k/\sqrt{n} \le c$, 
\[
\p{S_n=k~;~0\le S_i<k~\forall~0<i<n} = \Theta_c\left(\frac{k+1}{n^{2}}\right).
\]
\end{cor}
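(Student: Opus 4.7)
The plan is to decompose the walk at its midpoint and apply Theorems \ref{thm:ballot_mean0} and \ref{thm:gbt_new} to each half. Let $T = \lfloor n/2 \rfloor$. By the Markov property,
\[
\p{S_n = k,\, 0 \le S_i < k\ \forall\, 0 < i < n} = \sum_{m=0}^{k-1} \p{A(m)}\,\p{B(m)},
\]
where $A(m) = \{S_T = m,\, 0 \le S_i < k\ \forall\, 0 < i < T\}$ and, writing $S'$ for an independent copy of the walk, $B(m) = \{S'_{n-T} = k-m,\, -m \le S'_j < k-m\ \forall\, 0 < j < n-T\}$.

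For each $m$ in the ``bulk'' range, say $m \in [c_0\sqrt n, k - c_0\sqrt n]$ for a fixed small $c_0 > 0$, I would show
\[
\p{A(m)} = \Theta\!\left(\tfrac{m+1}{n^{3/2}}\right), \qquad \p{B(m)} = \Theta\!\left(\tfrac{k-m+1}{n^{3/2}}\right).
\]
For $\p{A(m)}$, the upper bound follows from Theorem \ref{thm:ballot_mean0} (simply dropping the upper barrier $k-1$), and the lower bound follows from Theorem \ref{thm:gbt_new} applied to the narrower strip $[0,\, m + \epsilon\sqrt T] \subset [0, k-1]$, the inclusion holding for $\epsilon$ small enough since $m \le k - c_0\sqrt n$. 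The second-half event $B(m)$ has its endpoint $k-m$ exactly at the top of its strip $[-m, k-1-m]$, placing it in a ``first-passage'' regime not directly covered by Theorem \ref{thm:gbt_new}; to handle this I would apply time-reversal, which converts $B(m)$ into the event for a walk from $0$ to $k-m$ in $n-T$ steps staying in $[1, k]$, then condition on the first step $X_1 = a \ge 1$ and apply Theorems \ref{thm:ballot_mean0} and \ref{thm:gbt_new} to the resulting shifted walk, using $\E{|X|^3} < \infty$ to dominate the sum over $a$.

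Combining these estimates, the $m$-th term is $\Theta((m+1)(k-m+1)/n^3)$. Summing over $m$ in the bulk yields $\Theta(k^3/n^3) = \Theta(k/n^2)$, since $k \asymp \sqrt n$. The contributions from $m$ outside the bulk (near $0$ or near $k$) sum to $O(c_0^2\, k/n^2)$ via cruder bounds derived from Theorem \ref{thm:ballot_mean0} alone and therefore do not affect the order of magnitude.

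The main obstacle is the treatment of $\p{B(m)}$: since its endpoint lies exactly at the top of its strip, Theorem \ref{thm:gbt_new} does not apply as stated. The time-reversal trick neatly recasts this as a two-barrier problem with the endpoint in the interior, but one must be careful that the reversed walk begins outside its interior strip, which forces the conditioning on the first step $X_1$ and requires the finite third-moment hypothesis to control the tail contribution from atypically large steps.
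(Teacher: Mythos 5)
Your proposal is correct and follows essentially the same route as the paper, which proves the stronger Lemma~\ref{fnkstrong} (Corollary~\ref{fnk} is the case $m=0$) by exactly this midpoint decomposition, applying Theorem~\ref{thm:gbt_new} to each half for the lower bound over the bulk range of intermediate values and Theorem~\ref{thm:ballot_mean0} for the upper bound. The only difference is in handling the second half: the paper reverses that portion of the walk from the outset (its backwards walk $S^b$), so the endpoint constraint at $k$ becomes the starting point of the reversed walk and Theorem~\ref{thm:gbt_new} applies directly with lower barrier $0$, whereas your conditioning on the first step of the reversed walk is a workable but unnecessary extra detour around the same issue.
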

For reasons that will become clear in Section~\ref{sec:maximum}, we will need to further constrain the path of the walk. This can be done without significantly changing the sample path probability. 
\begin{cor}\label{gnk}Fix $c > 1$. If $\E{|X|^3}< \infty$, there is $m_0=m_0(c)$ such that
for any $n$ and all $k$ with $c^{-1} \le k/\sqrt{n} \le c$,
\[
\pc{S_n=k~;~0\le S_{n-i}\le k~\forall~0\le i<n~;~S_{n-m}<k-m^{1/7}~\forall m\ge m_0} = \Theta_c\left(\frac{k+1}{n^{2}}\right).
\]
\end{cor}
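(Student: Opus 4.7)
The upper bound in Corollary~\ref{gnk} should follow fairly directly from Corollary~\ref{fnk}. On the event in \ref{gnk}, the third constraint at $m=m_0$ gives $S_{n-m_0} < k$ strictly, so the walk can only attain the value $k$ in the final $m_0$ steps before time $n$. Conditioning on the last index $J < n$ with $S_J = k$ (or $J=-1$ if none exists), we have $J > n - m_0$. The piece $(S_0, \ldots, S_J)$ satisfies the event of Corollary~\ref{fnk} with $n$ replaced by $J$, contributing $\Theta((k+1)/J^2) = \Theta((k+1)/n^2)$; the remaining at most $m_0$ steps from $k$ back to $k$ (while staying in $[0,k]$) contribute a bounded constant factor depending on $m_0$ and the law of $X$.

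For the lower bound, write $A = \{S_n = k,\ 0 \le S_i < k\ \forall\ 0 < i < n\}$ and $E_3 = \{S_{n-m} < k - m^{1/7}\ \forall\ m \ge m_0\}$, so that the event in \ref{gnk} contains $A \cap E_3$. Since $\p{A} = \Theta((k+1)/n^2)$ by Corollary~\ref{fnk}, it suffices to show that $\p{A \cap E_3^c} \le \tfrac{1}{2}\p{A}$ for $m_0$ sufficiently large. I would union-bound:
\[
\p{A \cap E_3^c} \le \sum_{m \ge m_0} \p{A,\ S_{n-m} \ge k - m^{1/7}}.
\]
For each $m$, apply the Markov property at time $n-m$ and decompose over $s = S_{n-m} \in [k - m^{1/7},\ k-1]$ into two factors. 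For the first factor $\p{S_{n-m}=s,\ 0 \le S_i < k\ \forall\ 0 < i \le n-m}$ with $s$ near $k$, the key input is a uniform strip estimate of order $O((k-s)/n^{3/2})$, combining the ballot constraint at $0$ with a reflection (method of images) against the upper barrier at $k$; this improves on the one-sided $O((s+1)/u^{3/2})$ bound one would get from Theorem~\ref{thm:ballot_mean0} alone. For the second factor $\p{\tilde S_m = k - s,\ 0 \le s + \tilde S_i < k\ \forall\ 0 < i < m}$, I would use Kemperman's first-passage formula, which gives $O((k-s)/m^{3/2})$; here the lower constraint $s + \tilde S_i \ge 0$ is essentially automatic since $s \sim k \gg \sqrt{m}$.

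Writing $\delta = k-s$, this yields $\p{A,\ S_{n-m}=s} \le C\, \delta^2/(n^{3/2} m^{3/2})$. Summing over $\delta$ from $1$ to $m^{1/7}$ gives $\sum_\delta \delta^2 = O(m^{3/7})$, hence
\[
\p{A,\ S_{n-m} \ge k - m^{1/7}} = O\!\left(\frac{m^{3/7}}{n^{3/2} m^{3/2}}\right) = O\!\left(\frac{1}{n^{3/2}\, m^{15/14}}\right).
\]
Summing over $m \ge m_0$ uses $\sum_{m \ge m_0} m^{-15/14} = O(m_0^{-1/14})$ (convergent because $15/14 > 1$), producing the crucial bound $\p{A \cap E_3^c} = O(m_0^{-1/14})\, \p{A}$. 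Choosing $m_0 = m_0(c)$ large enough to make the implicit constant below $1/2$ closes the argument. The exponent $1/7$ in the statement is precisely what makes this summation converge; any exponent strictly less than $1/6$ would work.

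The hard part I expect is establishing the uniform first-factor bound $O((k-s)/n^{3/2})$ valid for \emph{every} $u = n - m \in [m_0, n]$. For $u \sim k^2 \sim n$, this is governed by the leading eigenmode of the Dirichlet heat kernel in the strip and matches the magnitude predicted by Corollary~\ref{fnk}; for much smaller $u$, the bound is dominated by the moderate/large deviations cost of reaching a level of order $\sqrt{n}$ in time $u$, which kills the contribution even before the prefactor $(k-s)$ is taken into account. Matching the constants across these regimes, and in particular handling the transition around $u \sim k\, m^{1/7} \sim n^{9/14}$ where the $\delta$-dependent factor from the upper-boundary reflection ceases to be $O(1)$, is the main case-analysis one needs to carry out; it can likely be done by adapting the ballot-type constructions of \citet{AdRe2008} referenced in the paper.
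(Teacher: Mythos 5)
Your lower-bound strategy is, in its core computation, exactly the paper's: union-bound over the violation times $m\ge m_0$, estimate $\p{A,\,S_{n-m}=k-\delta}$ by a bridge decomposition of order $\delta^2/(m^{3/2}n^{3/2})$ (the paper packages this as Corollary~\ref{fnksmjrev}, derived from Theorem~\ref{thm:gbt_new} and Lemma~\ref{fnkstrong} rather than from reflection/Kemperman, but the strength is the same), sum $\delta^2$ up to $m^{1/7}$ to get $m^{3/7}$, and obtain the convergent series $\sum_{m\ge m_0} m^{-15/14}=O(m_0^{-1/14})$, whence $\p{A\cap E_3^c}\le\tfrac12\p{A}$ for $m_0$ large. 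Your observation that the exponent $1/7$ is chosen to make this converge (any exponent below $1/6$ works) is also correct.

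The genuine gap is the one you flag yourself at the end: your union bound runs over \emph{all} $m\ge m_0$, so you need the first-factor estimate $\p{S_u=s,\;0\le S_i<k\ \forall i\le u}=O((k-s)/n^{3/2})$ uniformly down to small $u=n-m$. In that regime the walk must reach height $\approx k\asymp\sqrt n$ in $u\ll n$ steps; since only $\E{|X|^3}<\infty$ is assumed, this is a moderate-deviation event with merely polynomial decay, and your ``it kills the contribution'' claim would need real work (and is not covered by any estimate in the paper, whose Corollary~\ref{fnksmjrev} is stated only for $m\le n/2$, i.e.\ $u\ge n/2$). The paper sidesteps this entirely: it only union-bounds over $m_0\le m\le n/2$, and handles $m>n/2$ structurally, by proving the lower bound not for $\p{A}$ but for the probability of a sub-event $H_{n,k}$ built from a midpoint decomposition in which the first half of the walk is forced to stay below $3k/4$; since $3k/4\le k-m^{1/7}$ for all $m\le n$ when $k\ge\sqrt n/c$ and $n$ is large, the constraint at times $n-m$ with $m>n/2$ is then automatic. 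You should either adopt that device or supply the missing moderate-deviation estimate; as written, your argument is incomplete precisely where you predicted. (Your upper bound via the last visit to level $k$ is fine; the paper leaves it implicit, as it follows from Lemma~\ref{fnkstrong}.)
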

                     
\section{Asymptotics for the maximum}\label{sec:maximum}

Our main aim in this section is to prove lower bounds on the tail probabilities for $\m=\sup\{S_u: u\in \mathscr L\}$, the maximum position of a \emph{living} particle. As indicated in the introduction, this turns out to be crucial in our proof that $\E{Z\log Z}$ is infinite. The not-quite matching upper bound announced in Theorem~\ref{thm:max} rely on completely different arguments, and we shall delay its presentation until Section~\ref{sec:ZlogZ}.

We first prove a straightforward upper tail bound on $\mb=\sup\{S_u: u\in \mathcal T\}$, the maximum position of a particle (living or not) in the branching random walk, using Markov's inequality and the size-biasing technique. 
\begin{lem}\label{maxupperbound}
For a critical branching random walk, we have, 
for all $k \geq 1$, $\p{\mb \geq k} \leq e^{-\lambda^\star k}$. 
\end{lem}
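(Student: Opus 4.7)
The plan is to apply Markov's inequality not to $\mb$ directly but to the number of particles that are the \emph{first} on their line of ancestry to reach height $k$; size-biasing then collapses this expectation into an expression involving a single random walk, and the exponential martingale at the critical tilt $\lambda^\star$ does the rest.

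Concretely, let $\mathcal L_k = \{u \in \mathcal T : S_u \ge k,\ S_w < k\ \forall\, w \prec u\}$ and $N_k = |\mathcal L_k|$. The event $\{\mb \ge k\}$ is exactly $\{N_k \ge 1\}$, so by Markov it suffices to show $\E{N_k} \le e^{-\lambda^\star k}$. Write $N_k = \lim_{n\to\infty} N_k^{(n)}$ with $N_k^{(n)} = |\mathcal L_k \cap \mathcal T_{\le n}|$; by monotone convergence it is enough to bound each $\E{N_k^{(n)}}$ uniformly in $n$. The many-to-one identity derived in (\ref{eq:size-bias}) (applied to the indicator of a first-hitting event on the spine) gives
\[
\E{N_k^{(n)}} \;=\; \sum_{m=0}^{n} (\E B)^m \cdot \p{S_m \ge k,\; S_i < k\ \forall\, 0<i<m} \;=\; \sum_{m=0}^n (\E B)^m \p{\tau = m},
\]
where $\tau = \inf\{m : S_m \ge k\}$ is the first-passage time of the i.i.d.\ walk $\{S_m\}$ with step $X$.

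The next step exploits criticality. Since $\Lambda'(\lambda^\star) = 0$ and $f(\lambda^\star) = \log \E B$, one has $\Lambda(\lambda^\star) = -\log \E B$, so $\E{e^{\lambda^\star X}} = 1/\E B$ and consequently
\[
M_m \;:=\; (\E B)^m\, e^{\lambda^\star S_m}
\]
is a nonnegative martingale with $\E{M_0} = 1$. Applying optional stopping at the bounded stopping time $\tau \wedge n$ (no Fatou required), and using that $S_\tau \ge k$ on $\{\tau \le n\}$,
\[
1 \;=\; \E{M_{\tau \wedge n}} \;\ge\; \E{M_\tau \cdot \I{\tau \le n}} \;\ge\; e^{\lambda^\star k}\sum_{m=0}^n (\E B)^m\, \p{\tau = m} \;=\; e^{\lambda^\star k}\,\E{N_k^{(n)}}.
\]
Rearranging and letting $n \to \infty$ yields $\E{N_k} \le e^{-\lambda^\star k}$, which completes the proof.

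There is no serious obstacle: the main thing to verify carefully is the many-to-one identity, which is exactly the size-biasing computation the paper has already used in (\ref{eq:size-bias}), and the fact that the critical tilt $\lambda^\star$ is precisely what makes $(\E B)^m e^{\lambda^\star S_m}$ a martingale. Truncating at generation $n$ avoids any subtleties with optional stopping at the possibly infinite time $\tau$.
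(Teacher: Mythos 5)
Your proof is correct, and it takes a genuinely different route from the paper. The paper's argument is a two-step bootstrap: it first observes that $f_\ell=\p{\mb\ge\ell}$ is supermultiplicative, so by Fekete's lemma $\frac{1}{\ell}\log f_\ell$ converges to its supremum $c$; it then proves the weaker bound $\p{\mb\ge k}=O(k^2e^{-\lambda^\star k})$ by size-biasing, splitting the sum over generations at $n=k^2$, and invoking Chernoff's bound for $n<k^2$ and the ballot-type Theorem~\ref{thm:ballot_mean0} (via Lemma~\ref{lem:transfer}) for $n\ge k^2$; the polynomial correction forces $c\le-\lambda^\star$, and the supremum characterization then upgrades this to the clean bound $f_k\le e^{-\lambda^\star k}$ for every $k$. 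You instead count first-passage particles and kill the sum $\sum_m(\E B)^m\p{\tau=m}$ with the stopped exponential martingale $(\E B)^m e^{\lambda^\star S_m}$, whose martingale property is exactly the criticality identity $\Lambda(\lambda^\star)=-\log\E B$. Your computation checks out (the overshoot $S_\tau\ge k$ works in your favour because $\lambda^\star>0$, and truncating at $\tau\wedge n$ legitimizes optional stopping), and it has the advantage of needing neither Fekete's lemma nor any of the sample-path estimates of Section~\ref{sec:ballot} --- it is a self-contained first-moment argument. What the paper's detour through the $O(k^2e^{-\lambda^\star k})$ bound buys is essentially nothing extra for this particular lemma; the authors presumably chose it because the size-biasing-plus-ballot machinery is already set up for the rest of the paper. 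Both routes yield the same constant-free inequality.
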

\begin{proof}
For $\ell \geq 0$, write $f_\ell = \p{\mb \geq \ell}$. 
Then the sequence $\{f_{\ell}\}_{\ell \in \N}$ is supermultiplicative. 
This is straightforwardly seen, since in order to have $\mb \geq \ell+m$, it suffices to first find a node 
$v \in \mathcal T$ with $S_v \geq \ell$, then find a node $w$ in $\mathcal T_v$ with $S_w \geq m$.
It follows by Fekete's lemma \cite{Fekete1923} (see also, e.g., \cite{Steele1997}) that there exists $c \leq \infty$ such that 
\begin{equation}\label{supermult}
\lim_{\ell \to \infty} \frac{\log f_{\ell}}{\ell} = \sup_{\ell \geq 1} \frac{\log f_{\ell}}{\ell} = c.
\end{equation}
We claim that as $k\to\infty$, 
\begin{equation}\label{bootstrap}
\p{\mb\geq k}=O(k^2e^{-\lambda^\star k}).
\end{equation}
Assuming (\ref{bootstrap}), the lemma then follows immediately. To see this, 
note that if we had $c > -\lambda^\star$, then by (\ref{supermult}) there would exist $c'$ with 
$c > c' > -\lambda^\star$ and $K>0$ such that for all $\ell \geq K$, $\p{R \geq \ell} \geq e^{-c' \ell}$, 
which contradicts (\ref{bootstrap}). It thus remains to prove (\ref{bootstrap}). 

By Markov's inequality and size-biasing, we have, for $k\ge 1$,
\begin{align*}
\p{\mb\geq k}  &= \sum_{n\ge 1} \p{\mb_{i} < k~\forall 0\le i<n; \mb_n \geq k}\\
&  \leq \sum_{n\ge 1} (\e B)^n \cdot \phat{S_{v_i} < k, i=0,\ldots,n-1;~S_{v_n} \geq k}.
\end{align*}
For $n < k^2$, by Chernoff's bound (Lemma~\ref{lem:chernoff}) we have 
\[
\phat{S_{v_i} < k, i=0,\ldots,n-1;~S_{v_n} \geq k} \leq \p{S_{v_n} \ge k} \leq (\e B)^{-n} e^{-\lambda^\star k}.
\]
For $n \ge k^2$, we can apply Theorem~\ref{thm:ballot_mean0} to the reverse random walk $(S_n-S_{n-i}, i=0, \dots, n)$
and the exponential change of measure in Lemma \ref{lem:transfer} to obtain 
\[
\phat{S_{v_i} < k, i=0,\ldots,n-1,~S_{v_n} \geq k} = O\pran{\frac{k}{n^{3/2}} \frac{e^{-\lambda^\star k}}{(\e B)^n}}, 
\]
uniformly over all $n \ge k^2$. Summing these two bounds yields 
\[
\p{\mb \geq k} \leq k^2e^{-\lambda^\star k} + O \Bigg(\sum_{n\ge k^2} \frac{ke^{-\lambda^\star k}}{n^{3/2}}\Bigg) = O(k^2e^{-\lambda^\star k}).  \qedhere
\]
\end{proof}

Finding a good lower bound for the tail probabilities for $\m$ is more technical. We believe that the tail bounds of Lemma~\ref{maxlowerbound} are in fact of the correct order. 

\begin{lem}\label{maxlowerbound}For a critical killed branching random walk, if $\E{B\log^8 B} < \infty$ then 
\[
\p{\m=k} = \Omega\pran{\frac{e^{-\lambda^\star k}}{k}}.
\]
\end{lem}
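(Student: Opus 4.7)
The plan is to exhibit, for each integer $k$, an event of probability $\Omega(e^{-\lambda^\star k}/k)$ on which $M=k$ exactly, via the second moment method applied to a carefully chosen counting variable. Fix $n \asymp k^2$ and let $N$ denote the number of vertices $v \in \mathscr L_n$ satisfying: \textbf{(a)} $S_v = k$, the ancestor path of $v$ lies in $[0,k]$, and $S_w < k - m^{1/7}$ whenever $w$ is the ancestor at depth $n-m$ with $m \ge m_0$ (matching Corollary~\ref{gnk}); and \textbf{(b)} every subtree hanging off the ancestor path of $v$, together with the subtree rooted at $v$ itself, has maximum displacement at most $k$. On the event $\{N \ge 1\}$ we have $M=k$, so it suffices to prove $\p{N \ge 1} = \Omega(e^{-\lambda^\star k}/k)$.

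For the first moment, apply the size-biasing identity \eqref{eq:size-bias} to write $\E{N} = (\E{B})^n \phat{\mathcal E}$, where $\mathcal E$ is the event that the spine satisfies (a) and (b). Under $\hat{\mathbf P}$ the spine walk has i.i.d.\ steps with law $X$ and is independent of the off-spine subtrees given its trajectory. Lemma~\ref{lem:transfer} converts the walk factor from (a) into a mean-zero sample-path event weighted by $e^{-nf(\lambda^\star)} e^{-\lambda^\star k}$, and Corollary~\ref{gnk} evaluates the resulting centered probability as $\Theta(k/n^2)$. Since $f(\lambda^\star) = \log \E{B}$ at criticality, the walk factor equals $\Theta(e^{-\lambda^\star k}/k^3)$. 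For (b), Lemma~\ref{maxupperbound} shows that an off-spine subtree rooted at depth $n-m$ breaches $k$ with probability at most $e^{-\lambda^\star(k - S_w)} \le e^{-\lambda^\star m^{1/7}}$ per child. Splitting the off-spine child count at depth $n-m$ at the threshold $e^{\lambda^\star m^{1/7}}/m^2$, the typical part contributes $\sum_m 1/m^2 < \infty$ and the atypical probability is $O(1/m^{8/7})$ by the tail bound $\p{\hat B \ge t} = O(1/\log^8 t)$, which in turn follows from $\E{B\log^8 B} < \infty$; both are summable. Hence every off-spine subtree stays $\le k$ with probability bounded below uniformly, and the descendant subtree rooted at $v$ does so with probability at least $1 - e^{-\lambda^\star}$. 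Summing over $n$ in a range of length $\Theta(k^2)$ yields $\E{N} = \Omega(e^{-\lambda^\star k}/k)$.

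For the second moment, decompose $\E{N^2}$ over pairs $(u,v)$ of good particles according to the generation $i$ of their most recent common ancestor. Each contribution is a product of three sample-path factors --- a constrained walk from root to the common ancestor (length $i$) and two independent constrained walks from there to $u$ and to $v$ (each of length $n-i$) --- together with the off-spine and descendant constraints on the full tree. Applying Theorem~\ref{thm:gbt_new} with an upper barrier at $k$, Lemma~\ref{lem:transfer}, and the same offspring-size truncation as above, one obtains $\E{N^2} \le C\,\E{N}$. The Paley--Zygmund inequality then yields $\p{N \ge 1} \ge (\E{N})^2/\E{N^2} \ge c\,\E{N} = \Omega(e^{-\lambda^\star k}/k)$.

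The main obstacle is the second moment: with only $\E{B\log^8 B} < \infty$ we lack $\E{\hat B} < \infty$ in general, so bounding $\E{N^2}$ requires truncating the offspring sizes both along the spine and at the splitting MRCA. The $m^{1/7}$ exponent in (a) is calibrated precisely so that the exponential gap $e^{-\lambda^\star m^{1/7}}$ between path and barrier dominates the polylogarithmic tail $O(1/\log^8 t)$ of $\hat B$, rendering the truncation errors summable in both the first and second moment computations.
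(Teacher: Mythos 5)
Your first-moment computation is essentially the paper's: the same useful-walk event (Corollary~\ref{gnk} plus Lemma~\ref{lem:transfer} giving $\Theta(k e^{-\lambda^\star k}/(n^2(\e B)^n))$ for the spine), the same control of off-spine subtrees via Lemma~\ref{maxupperbound} and a truncation of $\hat B-1$ calibrated against the $e^{-\lambda^\star m^{1/7}}$ gap (this is exactly the paper's Lemma~\ref{enin}), and the same summation over $n\in[k^2,2k^2]$. Where you diverge is the passage from $\E{N}$ to $\p{N\ge 1}$, and that is where the gap lies. The bound $\E{N^2}\le C\,\E{N}$ is asserted, not proved, and it is not routine here: the cross terms require summing over the generation $i$ and height $s$ of the most recent common ancestor, where the change of measure produces a factor $e^{\lambda^\star s}$ that concentrates the sum near the upper barrier, and --- more seriously --- the expected number of \emph{pairs} of children at the branch point involves $\E{B(B-1)}$, which may be infinite under the standing hypothesis $\E{B\log^8 B}<\infty$. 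You name this obstacle, but ``truncating the offspring sizes at the splitting MRCA'' is not a fix you can apply after the fact: any such truncation must be built into the definition of $N$ (changing which vertices are counted) and then re-verified not to destroy the first-moment lower bound. As written, the second-moment step would not go through.

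The paper avoids this entirely by a small but decisive change in the counting variable: it counts $v\in\mathscr L_n$ with $S_v=k$ such that \emph{every} living node outside $\mathscr L^v$ has displacement \emph{strictly} less than $k$ (the set $B_{k,n}$ in \eqref{eq:def_mkn} and below). Two distinct such vertices cannot coexist --- each would violate the strict barrier imposed by the other --- so $|B_{k,n}|\in\{0,1\}$ and $\p{B_{k,n}\neq\emptyset}=\E{|B_{k,n}|}$ exactly; no second moment is needed. Your condition (b) uses ``$\le k$'' for the hanging subtrees, which does not force uniqueness and is precisely what pushes you into Paley--Zygmund. Replacing ``$\le k$'' by ``$<k$'' off the distinguished path (and handling the descendants of $v$ separately via $\p{R=0}>0$, as the paper does) turns your first-moment computation, which is already correct, into a complete proof.
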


Since $\m\le \mb$
Lemmas~\ref{maxupperbound} and~\ref{maxlowerbound} together already sandwich $\p{\m =k}$ in a relatively small interval. 
There are two reasons we are unable to prove matching upper bounds for Lemma~\ref{maxlowerbound}. The first is that Theorem~\ref{thm:ballot_mean0} only 
applies when $k=O(\sqrt{n})$, which one of the reasons we are required 
to split the sum in Lemma~\ref{maxupperbound}. The second is that when $n$ is much less than $k^2$, the large deviations regime of $S_{v_n}$ begins to change. 
In principle, the uniform version of Theorem~\ref{thm:ld_badahur} proved by \citet{Petrov1965,petrov75sums} could be used in this case. However, to obtain matching bounds 
it would still be necessary to exploit the fact that the random walk must remain positive ---in other words, some ballot-style sample path probability bound would still be 
needed in this regime and, as far as we are aware, no such result has been proved.

Before proving Lemma~\ref{maxlowerbound}, we introduce some relevant notation. 
Given $v \in \mathcal U$ and $w \prec v$ (i.e.~$w \preceq v$ and $w \neq v$), let $w_v$ be the first node after $w$ on the path from $w$ to $v$. 
If $w \in \mathscr L$, we define the (shifted) maximum 
\[
\m^{w} = \max\{S_x-S_w~:~x \in \mathscr L, w \preceq x\},
\] 
and we let ${\hat \m}^w$ be the equivalent in $\hat {\mathscr L}$. 
In the tilted setting, for a vertex $v_i$ along the spine, we define the \emph{maxima off the spine}, 
\begin{align} \label{eq:def_rstar}
\hat \mb^{\star v_i}& = \max\{S_x-S_{v_i}~:~x \in \hat {\mathcal T}, v_i \preceq x, v_{i+1} \not\preceq x\}   \\
\hat \m^{\star v_i}& = \max\{S_x-S_{v_i}~:~x \in \hat {\mathscr L}, v_i \preceq x, v_{i+1} \not\preceq x\},\nonumber 
\end{align}                                                  
where we set $\hat M^{\star v_i}=-\infty$ if $v_i \notin \hat {\mathscr L}$. 
For any $k$ and $n$, let $\mathcal{M}_{k,n}$ be the event that the maximum is $k$, 
the maximum is first (level-wise) achieved by a single individual $v \in \mathscr L_n$, 
and no other node $w\in \mathscr L \setminus \mathscr L^v$ has displacement $k$ or higher. 
In symbols, we have 
\begin{equation}\label{eq:def_mkn}
\mathcal{M}_{k,n}=\{\m=k,\exists~v \in \mathscr L_n~:~S_v=k,~\forall w \in \mathscr L\setminus \mathscr L^v,  S_w<k\}.
\end{equation}
Also, let 
\[
B_{k,n}=\{v \in \mathscr L_n~:~S_v=k,~\forall w \in \mathscr L\setminus \mathscr L^v, S_w < k\},
\]
so $\mathcal{M}_{k,n} = \{\m=k\} \cap \{B_{k,n}\neq \emptyset\}$. 
We will end up working with respect to the tilted tree $\hat{\mathcal{T}}$ and its pruned subtree of living nodes $\hat{\mathscr {L}}$, and make corresponding versions 
of the above events and variables ---so, for example: 
\[
\hat{B}_{k,n}=\{v \in \hat {\mathscr L_n}~:~S_v=k,~\forall w \in \hat {\mathscr L}\setminus \hat {\mathscr L}^v, S_w < k\}.
\]

The following lemma is the crucial lower bound on the probability of existence of particles with a large displacement.
\begin{lem}\label{mknlowerbound}
For a critical branching random walk, if $\E{B\log^8 B} < \infty$ then uniformly for all $k \geq 1$ and all $n$ with $k^2 \leq n \leq 2k^2$, 
	\[
	\p{\mathcal{M}_{k,n}} = \Omega\left(\frac{ke^{-\lambda^\star k}}{n^2}\right) = \Omega\left(\frac{e^{-\lambda^\star k}}{k^3}\right).
	\]
\end{lem}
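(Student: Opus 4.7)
The plan is to bound $\p{\mathcal{M}_{k,n}}$ from below by the first moment of a restricted count, using the sample-path bound of Corollary~\ref{gnk} for the size-biased spine, and the moment hypothesis $\E{B\log^8 B} < \infty$ to control the off-spine subtrees. First I would define $B_{k,n}^{*} \subseteq B_{k,n}$ by additionally requiring the path $\emptyset = u_0, \dots, u_n = v$ to satisfy the refined constraint $S_{u_{n-m}} < k - m^{1/7}$ for all $m \geq m_0$. The same uniqueness argument used for $B_{k,n}$ shows $|B_{k,n}^{*}| \in \{0,1\}$, and the event $\{v \in B_{k,n}^{*}\}$ depends only on the path to $v$ and the subtrees rooted at off-path vertices. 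The subtree $\mathcal T^v$ is independent of these, and given $v \in B_{k,n}^{*}$, the further event $\{\m = k\}$ reduces to the shifted BRW from $v$ having maximum $\leq 0$, which has probability at least $1 - e^{-\lambda^\star}$ by Lemma~\ref{maxupperbound}. Combining,
\[
\p{\mathcal{M}_{k,n}} \;\geq\; (1-e^{-\lambda^\star})\, \E{|B_{k,n}^{*}|}.
\]

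Size-biasing yields $\E{|B_{k,n}^{*}|} = (\E B)^n\, \phat{(P) \cap (Q)}$, where $(P)$ asserts the spine walk $(S_{v_i})_{i=0}^n$ satisfies the path constraints of Corollary~\ref{gnk} with $S_{v_n} = k$, and $(Q)$ asserts that for each $0 \leq i \leq n-1$ no off-spine subtree rooted at an off-spine child of $v_i$ in $\hat{\mathcal T}$ produces a living particle with absolute position $\geq k$. Under $\hat{\mathbf P}$ the spine walk has i.i.d.\ steps distributed as $X$, and the off-spine subtrees are independent copies of $\mathcal T$, so the two events factor given the spine. For the spine, Lemma~\ref{lem:transfer} (with $f(\lambda^\star) = \log \E B$ at criticality) applied together with Corollary~\ref{gnk} for the centered walk $\tilde S$ gives
\[
(\E B)^n\, \phat{(P)} \;=\; e^{-\lambda^\star k}\, \p{\tilde S_n = k,\ \tilde S\ \text{satisfies path conditions}} \;=\; \Theta\!\left(\frac{k\, e^{-\lambda^\star k}}{n^2}\right),
\]
uniformly for $k^2 \leq n \leq 2k^2$, since then $k/\sqrt n$ lies in the required compact subinterval.

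It remains to show $\phat{(Q) \mid (P)} = \Omega(1)$. Conditional on the spine, the off-spine subtrees at distinct levels are independent, so $\phat{(Q) \mid \mathrm{spine}} = \prod_{i=0}^{n-1} \E{(1-p_i)^{\hat B - 1}}$, where $p_i$ is the probability that a single off-spine subtree at $v_i$ has a particle reaching absolute position $\geq k$. By Lemma~\ref{maxupperbound} combined with $\E{e^{\lambda^\star X}} = 1/\E B$, one gets $p_i \leq C\, e^{-\lambda^\star(k - S_{v_i})}$. Using $-\log y \leq 1-y$ and $1 - (1-p)^b \leq \min(1, bp)$,
\[
-\log \E{(1-p_i)^{\hat B - 1}} \;\leq\; \phat{\hat B > 1/p_i} \;+\; p_i\, \E{\hat B\, \I{\hat B \leq 1/p_i}}.
\]
Markov's inequality with $\E{B \log^8 B} < \infty$ gives $\pc{B > x} = O((x \log^8 x)^{-1})$, hence $\phat{\hat B > x} = O(\log^{-8} x)$; Abel summation gives $\E{B^2\, \I{B \leq x}} = O(x / \log^8 x)$, so the second term is also $O(\log^{-8}(1/p_i))$. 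On $(P)$, $k - S_{v_i} \geq (n-i)^{1/7}$ for $n-i \geq m_0$, so each term is $O((n-i)^{-8/7})$, and $\sum_m m^{-8/7} < \infty$. The finitely many indices $n-i < m_0$ contribute fixed positive factors since $\E{(1-p)^{\hat B - 1}} = G_B'(1-p)/\E B > 0$ uniformly for $p$ bounded away from $1$ (and $p_i < 1$ in our range). Multiplying gives $\phat{(Q) \mid (P)} \geq \Omega(1)$.

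The main obstacle is the calibration in the last paragraph: the exponent $1/7$ from Corollary~\ref{gnk} and the exponent $8$ from the moment hypothesis combine to give $8/7 > 1$, which is precisely what makes the series convergent and the infinite product bounded below by a positive constant. Without the extra constraint in $(P)$ the spine could hover close to $k$ and leave the off-spine subtrees too many chances to reach $k$; without the $\E{B\log^8 B}$ hypothesis the size-biased offspring $\hat B$ would have too heavy a tail for the above bounds to be summable.
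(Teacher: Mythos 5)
Your proposal is correct and follows essentially the same route as the paper: restrict to spine paths satisfying the refined $S_{n-m}<k-m^{1/7}$ constraint of Corollary~\ref{gnk}, size-bias and transfer via Lemma~\ref{lem:transfer} to get the $\Theta(ke^{-\lambda^\star k}/n^2)$ spine contribution, and use the $\E{B\log^8 B}$ hypothesis with the $8/7>1$ calibration to show the off-spine subtrees succeed with probability $\Omega(1)$ (this is the paper's Lemma~\ref{enin}). The only differences are cosmetic: you bound the product $\prod_i\E{(1-p_i)^{\hat B-1}}$ by a log-moment/Abel-summation computation where the paper truncates $\hat B$ explicitly, and you deduce $\p{\mb=0}>0$ from Lemma~\ref{maxupperbound} rather than from (\ref{eq:hu-shi}).
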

We remark that Lemma~\ref{maxlowerbound} follows immediately from Lemma~\ref{mknlowerbound}; 
the events $\mathcal M_{k,n}$ are disjoint, so we have 
$$
\p{M=k} \geq \sum_{k^2\le n\le 2 k^2} \p{\mathcal{M}_{k,n}} = \Omega\pran{\frac{e^{-\lambda^\star k}}{k}}.
$$
\begin{proof}[Proof of Lemma~\ref{mknlowerbound}]
We first remark that $B_{k,n}$ is either empty or contains just one individual ---so in particular 
$\p{B_{k,n} \neq \emptyset} = \e{|B_{k,n}|}$. 
We will in fact prove that
\begin{equation}\label{bknbound}
\p{B_{k,n}\neq \emptyset} = \Omega\pran{\frac{ke^{- \lambda^\star k}}{n^2}}.
\end{equation}
From the preceding equation, the lemma follows immediately as 
\[
\p{\mathcal{M}_{k,n}} = \probC{M=k}{B_{k,n} \neq \emptyset} \cdot \p{B_{k,n} \neq \emptyset}. 
\]
However, if $B_{k,n} \neq \emptyset$ then there is a single individual $v \in \mathscr {L}_n$ with $S_v=k$, 
and for all $w \prec v$, $S_w < k$ and no descendent $u$ of $w$ such that $u\not\in \mathscr L^v$ has $S_u \geq k$. Thus, 
if $B_{k,n} \neq \emptyset$ then for $\{M=k\}$ to occur it suffices that the node $v$ has no descendants $u \in \mathscr L$ with $S_u > S_v$. 
The probability of the latter event is bounded below by the probability that of $\sup\{S_u:u \in \mathcal T^v\}\le S_v$, i.e., that $\mb^v=0$. Since in the critical branching random walk $\mb\to-\infty$ a.s.\ by (\ref{eq:hu-shi}), we have
$\p{R=0} > 0$. Thus, 
\[
\p{\mathcal{M}_{k,n}} = \Omega(\p{B_{k,n} \neq \emptyset}). 
\]
To prove the lemma it thus suffices to establish (\ref{bknbound}). By linearity of expectation we have 
\begin{align}
\p{B_{k,n} \neq \emptyset} 	
& = \sum_{T\subset \mathcal U_{\le n}} \probC{B_{k,n}\neq\emptyset}{\mathcal{T}_{\leq n}=T}\cdot \mu [T]_{\le n} \nonumber\\
& = \sum_{T\subset \mathcal U_{\le n}} \sum_{v \in T_n} \probC{v \in B_{k,n}}{\mathcal{T}_{\leq n}=T}\cdot \mu[T]_{\le n} \nonumber\\
& = (\e B)^n \sum_{T\subset \mathcal U_{\le n}} \sum_{v \in T_n}\probC{v \in B_{k,n}}{\mathcal{T}_{\leq n}=T} \cdot \hat{\mu}^\star[T,v]_{\le n} \nonumber\\
& = (\e B)^n \cdot \phatc{v_n \in \hat{B}_{k,n}}.\label{eq:tiltbkn}
\end{align} 
To bound $\phat{v_n \in B_{k,n}}$, we first introduce the concept of a {\em useful} walk. Let $m_0=m_0(c)$ be the constant whose existence is guaranteed by Corollary~\ref{gnk} (in the current setting, $c=\sqrt 2$).
We say that the random walk, $S_0, S_1, \dots, S_n$, is $(k,n)$-useful if 
$$S_{n} = k, \qquad 0 \leq S_{i} < k \quad \forall 0 < i \le n,\qquad \mbox{and}\qquad S_{n-m}<k-m^{1/7}\quad \forall m\ge m_0.$$  
We let $U_{k,n}$ be the event that the random walk leading to $v_n$ in $\hat{\mathscr{L}_n}$ is $(k,n)$-useful, 
and remark that $U_{k,n}$ is a precise analogue of the event in Corollary~\ref{gnk}. 
If $U_{k,n}$ occurs, then for $\{v_n \in \hat{B}_{k,n}\}$ to occur it suffices that the following events occur:
\[\hat{\mb}^{\star v_{n-i}} < i^{1/7}\quad \forall m_0 \le i \le n\qquad \mbox{and}\qquad \hat{\mb}^{\star v_{n-i}} \le 0 \quad \forall 0\le i< m_0.\]
We remark that the events $U_{k,n}$, $\{\hat{\mb}^{\star v_{n-i}} \le 0\}$, $i=1, \dots, m_0$, and $\{\hat{\mb}^{\star v_{n-i}} < i^{1/7}\}$, $i=m_0,\ldots,n$ are mutually independent. This holds as, first, $U_{k,n}$ depends only on 
edge weights on the path from the root to $v_n$, second each random variable $\hat{\mb}^{\star v_{n-i}}$ depends only on the subtree of $\hat{\mathcal {T}}$ 
leaving $v_{n-i}$ off this path, and third, these subtrees are disjoint for distinct $i$. 

Let $\ell \ge 1$ be large enough that the number of children $C_i$ of $v_i$ satisfies $\p{C_i\le \ell}\ge 1/2$. Then,
$$
\phatc{\hat R^{\star v_i}\le 0}\ge \frac 1 2 \cdot \Cphatc{\hat R^{\star v_i}\le 0}{C_i\le \ell} \ge \frac 1 2 \cdot \p{R\le 0}^\ell \cdot \p{X\le 0}^\ell.$$ 
The latter probability is at least $\epsilon>0$ by (\ref{eq:hu-shi}) and since $\e X< 0$, so 
\begin{equation}\label{eq:lower_rstar}
\phatc{\hat R^{\star v_{n-i}}\le 0,~\forall 0\le i=0< m_0}\ge \epsilon^{m_0}>0.
\end{equation}
The lower bound in the lemma then follows from the next two inequalities
$$
\phat{U_{k,n}} = \Omega\pran{\frac{k}{n^2} \frac{e^{- \lambda^\star k}}{(\e B)^n}}\qquad \mbox{and}\qquad
\phatc{\hat{\mb}^{\star v_{n-i}} < i^{1/7}~\forall m_0\le i\le n} = \Omega(1).
$$
The first equation is an immediate consequence of Corollary~\ref{gnk} and the exponential change of measure result Lemma \ref{lem:transfer}. 
The second equation follows from the observation that the random variables $\hat \mb^{\star v_i}$, $i\ge 0$, are i.i.d., and  Lemma~\ref{enin} below. 
From these two bounds, (\ref{eq:tiltbkn}) and (\ref{eq:lower_rstar}), we immediately obtain 
\[
\p{B_{k,n} \neq \emptyset} = \Omega\bigg( (\e B)^n \cdot \phat{U_{k,n}} \cdot \prod_{m_0\le i\le n} \phatc{\hat{\mb}^{\star v_{n-i}} < i^{1/7}}\bigg) = \Omega\pran{\frac{ke^{- \lambda^\star k}}{n^2}},
\]
proving the lemma.
\end{proof}
\begin{lem}\label{enin}Let $\hat \mb^{\star v_i}$ be as defined by (\ref{eq:def_rstar}). If $\E{B\log^8 B} < \infty$ then as $n\to\infty$, we have 
\[\phatc{\hat \mb^{\star v_{n-i}}< i^{1/7}~\forall 0\le i\le n} = \Omega(1).\]
\end{lem}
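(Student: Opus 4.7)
The key observation is that the random variables $\hat\mb^{\star v_i}$ for $i\ge 0$ are i.i.d.\ under $\phatc{\,\cdot\,}$: each is a function of the off-spine subtree at $v_i$, and these subtrees are disjoint. Writing $p_i = \phatc{\hat\mb^{\star v_i}\ge i^{1/7}}$, the event of interest factors as $\prod_{i=0}^n (1-p_i)$. Since $\hat\mb^{\star v_n}\ge 0$ forces $p_0=1$, the lemma requires the natural correction $i\ge 1$ (matching how it is used in Lemma~\ref{mknlowerbound}). My plan is to prove $\sum_{i\ge 1} p_i<\infty$; combined with $1-p_i\ge e^{-2p_i}$ for $p_i\le 1/2$ and the positivity of the finitely many remaining factors, this yields $\prod_{i\ge 1}(1-p_i)=\Omega(1)$.

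To bound $p_i$, I condition on the number of off-spine children $C_i\sim\hat B-1$: on $\{C_i=c\}$, $\hat\mb^{\star v_i}$ is dominated by the maximum of $c$ independent copies of $X+\mb$, where $\mb$ is the overall maximum of a fresh un-killed critical branching random walk. Since $\mb\ge 0$, Lemma~\ref{maxupperbound} gives $\p{\mb\ge m}\le e^{-\lambda^\star m}$ for all $m\in\Z$, and a convolution then yields
\[
\p{X+\mb\ge k}\le \E{e^{\lambda^\star X}}\cdot e^{-\lambda^\star k} = e^{-\lambda^\star k}/\e B,
\]
where the last equality uses $\Lambda'(\lambda^\star)=0$ together with the criticality identity $f(\lambda^\star)=\log\e B$, so that $\Lambda(\lambda^\star)=-\log\e B$. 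A union bound over the off-spine children gives $p_i\le \ehat{\min\{1,\,C_i e^{-\lambda^\star i^{1/7}}/\e B\}}$.

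Summing over $i\ge 1$, I split the expectation according to whether $C_i$ falls below or above the threshold $\e B\cdot e^{\lambda^\star i^{1/7}}$. For the ``Chernoff'' part (small $C_i$), Fubini and the elementary estimate $\sum_{i\ge (\log c/\lambda^\star)^7} e^{-\lambda^\star i^{1/7}} = O((\log c)^6/c)$, obtained via the change of variables $u=i^{1/7}$ and dominated by the boundary of the integral $\int 7u^6 e^{-\lambda^\star u}\,du$, give a total of order $\ehat{(\log C_i)^6} = O(\E{B\log^6 B}/\e B)$. For the ``tail'' part (large $C_i$), the number of indices $i$ for which the threshold is exceeded is at most $(\log(\e B\cdot C_i)/\lambda^\star)^7$, giving a contribution of order $\ehat{(\log C_i)^7} = O(\E{B\log^7 B}/\e B)$. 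Both are finite under $\E{B\log^8 B}<\infty$, completing the argument.

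The main obstacle is that $C_i\sim\hat B-1$ need not have finite mean: a naive union bound (without truncation) would require the strictly stronger hypothesis $\E{B^2}<\infty$. Truncating at the threshold where the Chernoff-type bound becomes vacuous is precisely what replaces the linear moment of $C_i$ by a polylogarithmic one, and the exponent $1/7$ in the barrier $i^{1/7}$ is calibrated to match the $\E{B\log^8 B}$ hypothesis of Theorem~\ref{thm:critical}.
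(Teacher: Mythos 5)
Your proposal is correct and follows essentially the same route as the paper's proof: independence of the off-spine subtrees, a union bound over the $C_i\sim\hat B-1$ children using the exponential tail of $X+\mb$ (Lemma~\ref{maxupperbound} combined with the Chernoff bound for $X$), and a truncation of $C_i$ at the level where that bound saturates, which is exactly the paper's $k^\star(i)$. Your Fubini bookkeeping is marginally sharper---it shows $\E{B\log^7 B}<\infty$ would suffice, whereas the paper's Markov bound on $\pc{C_i>k^\star(i)}$ uses the eighth power---and you are right that the $i=0$ factor is degenerate, so the product must start at $i\ge 1$ (or $i\ge m_0$, as in the application in Lemma~\ref{mknlowerbound}).
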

\begin{proof}
To shorten notation, we will in fact bound $\phatc{\hat \mb^{\star v_{i}}< i^{1/7}~\forall 0\le i\le n}$ -- 
which by symmetry is identical to the quantity we wish to bound. 
As noted above, the events $\{\hat \mb^{\star v_i}< i^{1/7}\}$ are independent for distinct $i$ since they 
depend on disjoint subtrees of $\hat{\mathscr{L}}$. Next, recall that $C_i$ is the number of children 
of $v_i$ aside from $v_{i+1}$ and is distributed as $\hat{B}-1$. Call these children $v_{i,1},\ldots,v_{i,C_i}$, let the displacement from 
$v_{i}$ to $v_{i,j}$ be $X_{i,j}$, and let the subtree of $\hat{\mathscr{L}}$ rooted at $v_{i,j}$ be $\mathscr{L}_{i,j}$ 
(note that it is distributed as $\mathscr L$). 

Now fix $i$. Then for $\{\hat \mb^{\star v_i}< i^{1/7}\}$ to occur, it suffices that for each $j=1,\ldots,C_{i}$, the following inequality holds $\mb^{v_{i,j}}+X_{i,j}\leq i^{1/7}$. 
We thus have, for independent $R$ and $X$, 
\begin{align}
\phatc{\hat \mb^{\star v_i}< i^{1/7}} 
& \geq \sum_{k=1}^{\infty} \p{C_{n-i}=k}\cdot \pc{\mb +X < i^{1/7}}^k \nonumber\\
& \geq \sum_{k=1}^{\infty} \frac{(k+1)\p{B=k+1}}{\e B} \cdot \pc{X <i^{1/7}/2}^k\cdot \pc{R < i^{1/7}/2}^k.\label{sumtoinfty}
\end{align}
                        
By Lemmas~\ref{lem:chernoff} and~\ref{maxupperbound}, respectively, applied to the first and second probabilities in the last line above, 
for some $c_1,c_2 > 0$ and all $i$ sufficiently large (say $i \geq i_0$) we obtain the following bound: 
\begin{align*}
\pc{X <i^{1/7}/2}\cdot \pc{R < i^{1/7}/2} 	
& \geq (1-c_1e^{-\lambda^\star i^{1/7}/2})\cdot (1-e^{-\lambda^\star i^{1/7}/2}) \\
& \geq 1-c_2e^{-\lambda^\star i^{1/7}} > 1/2. 
\end{align*}
Since when $x < 1/2$, $\log(1-x) > -2x$, we have, for $i\ge i_0$,
\begin{align*}
\pc{R < i^{1/7}/2}^k\cdot \pc{X <i^{1/7}/2}^k 	
& \geq \exp\big(-2kc_2 e^{-\lambda^\star i^{1/7}}\big)\\
& \geq 1-2kc_2 e^{-\lambda^\star i^{1/7}}\\
& \ge 1-i^{-2},
\end{align*}    
for all $k\le k^\star(i)=\lfloor e^{\lambda^\star i^{1/7}}/(2c_2i^2) \rfloor$.

Furthermore, since $\E{B\log^{8}B}<\infty$, for any integer $m\ge 2$ we have the bound
\begin{align*}
\p{C_{i} \geq m} = \sum_{k > m} \frac{k\p{B=k}}{\e B} 
& \le \sum_{k>m} k \p{B=k}\\
& < \frac{1}{\log^{8} m} \sum_{k > m} k \log^{8} k \cdot \p{B=k} \\
& < \frac{\E{B\log^{8}B}}{\log^{8} m}.
\end{align*}
So, truncating the sum in (\ref{sumtoinfty}) at $k^\star$, 
for $i$ large enough that 
\[
\log^{8}\pran{\frac{e^{\lambda^\star i^{1/7}}}{2c_3i^2}} \geq \frac{(\lambda^\star i^{1/7})^{8}}{2}, 
\]     
we obtain 
\begin{align*}
\phatc{\hat \mb^{\star v_i}< i^{1/7}} 
& \geq \sum_{k=1}^{k^\star} \frac{(k+1)\p{B=k+1}}{\e B} \cdot \pran{1-i^{-2}} \\
			& \geq \pran{1-\p{C_{i}>k^\star}}\cdot \pran{1-i^{-2}} \\
			& \geq \pran{1-\frac{2\E{B\log^{8}B}}{(\lambda^\star i^{1/7})^{8}}}\cdot \pran{1-i^{-2}} > \frac 1 2,
\end{align*}
the last inequality holding for $i$ sufficiently large (say $i \geq i_1$, for $i_1\ge i_0$ large enough). 
Taking a product over $i \geq i_1$ yields
\begin{equation}\label{bigi}
\prod_{i_1\le i\le n}\phatc{\hat R^{v_i}< i^{1/7}} \geq \prod_{i=i_1}^n \bigg(1-\frac{3\E{B\log^{8}B}}{(\lambda^\star i^{1/7})^{8}}\bigg) = \Omega(1). 
\end{equation}
For smaller values of $i$, fix $m\ge 1$ large enough that $\p{C_{i} \leq m} \geq 1/2$, and note that in order for 
$\hat \mb^{\star v_i}< i^{1/7}$ to occur it suffices that first, $C_{i} \leq m$ and second, $R^{v_{i,j}} + X_{i,j} \leq 0$ for each 
$j=1,\ldots,C_{i}$. Taking $\epsilon > 0$ small enough that $\p{R + X \leq 0} > 2\epsilon$, we then have 
\[
\phatc{\hat \mb^{\star v_i}< i^{1/7}} \geq \frac 1 2 \hat{\mathbf{P}}(\hat \mb^{\star v_i}< i^{1/7}\,|\, C_{i} \leq m) \geq \epsilon^m, 
\]
which implies that 
\[
\prod_{0\le i< i_1}\phatc{\hat \mb^{\star v_i}< i^{1/7}} \geq \epsilon^{m i_2} = \Omega(1).
\]
Combining this last equation with (\ref{bigi}) completes the proof.
\end{proof}

\section{The size of the progeny: Proof of Theorem~\ref{thm:critical}}\label{sec:ZlogZ}

In this section, we prove our main result, Theorem~\ref{thm:critical}, using the analysis of the shape of random walks in Section~\ref{sec:ballot}. 

\begin{lem}\label{lem:finite-mean}For a well-controlled critical killed branching random walk, the total progeny $Z$ satisfies $\E Z <\infty$.
\end{lem}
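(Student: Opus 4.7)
The plan is to plug the critical-case identity into the size-biasing formula already derived in equation~(\ref{eq:size-bias}) and then estimate the resulting sum via Lemma~\ref{lem:transfer} and the ballot-type bound of Theorem~\ref{thm:ballot_mean0}. Concretely, (\ref{eq:size-bias}) reads
\[
\E{Z} = \sum_{n \ge 0} (\e{B})^n \cdot \phatc{v_n \in \hat{\mathscr{L}}_n},
\]
and since the random walk $(S_{v_0},S_{v_1},\ldots)$ along the spine has the law of the unbiased walk $S_n = X_1+\cdots+X_n$, the event $\{v_n \in \hat{\mathscr{L}}_n\}$ is exactly $\{S_i \ge 0 \text{ for all } 1 \le i \le n\}$. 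The goal is therefore to prove that $(\e B)^n \p{S_i \ge 0, 1 \le i \le n} = O(n^{-3/2})$, which is summable.

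First I would apply Lemma~\ref{lem:transfer} with $\mathscr{B} = \{(s_1,\ldots,s_n) : s_i \ge 0 \text{ for all } i\}$ to rewrite
\[
\p{S_i \ge 0, 1 \le i \le n} = e^{-n f(\lambda^\star)} \cdot \E{\exp(-\lambda^\star \tilde S_n)\,\I{\tilde S_i \ge 0, 1 \le i \le n}},
\]
where $\tilde S$ is the mean-zero random walk associated with $Z_{\lambda^\star}$. In the critical case the defining equation $f(\lambda^\star) = \log \e B$ gives $e^{-n f(\lambda^\star)} = (\e B)^{-n}$, so
\[
(\e B)^n \cdot \phatc{v_n \in \hat{\mathscr{L}}_n} = \sum_{k \ge 0} e^{-\lambda^\star k}\,\p{\tilde S_n = k,\ \tilde S_i \ge 0 \text{ for } 0 < i < n}.
\]

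Now I would split this sum at $k = \sqrt{n}$. For $0 \le k \le \sqrt{n}$, Theorem~\ref{thm:ballot_mean0} applied to $\tilde S$ with $m = 0$ and $c = 1$ yields $\p{\tilde S_n = k,\ \tilde S_i \ge 0 \text{ for } 0 < i < n} = O((k+1)/n^{3/2})$, so this portion of the sum is bounded by
\[
\frac{C}{n^{3/2}} \sum_{k \ge 0} (k+1) e^{-\lambda^\star k} = O\bigl(n^{-3/2}\bigr).
\]
For $k > \sqrt n$ I would use the trivial bound on the probability by $1$, giving a contribution of at most $\sum_{k > \sqrt n} e^{-\lambda^\star k} = O(e^{-\lambda^\star \sqrt n})$, which is negligible compared with $n^{-3/2}$. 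Combining the two pieces gives $(\e B)^n\phatc{v_n \in \hat{\mathscr L}_n} = O(n^{-3/2})$, and summing in $n$ yields $\E Z < \infty$.

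I do not expect any serious obstacle: both ingredients (the size-biasing identity and the mean-zero ballot estimate) are already in place in the paper, and Lemma~\ref{lem:transfer} is tailor-made for exactly this translation from the large-deviations regime of $S$ to the central regime of $\tilde S$. The only minor point to be careful about is verifying that $\tilde S$ satisfies the hypotheses of Theorem~\ref{thm:ballot_mean0} (integer lattice, mean zero, finite positive variance), which follows from the properties of $Z_{\lambda^\star}$ recorded around equation~(\ref{eq:y_exp}) together with the standing assumption that $X$ has lattice $\mathbb Z$.
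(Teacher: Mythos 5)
Your proposal is correct and follows essentially the same route as the paper: decompose $\E{|\mathscr L_n|}$ by the terminal value $S_n=k$, transfer to the centered walk via Lemma~\ref{lem:transfer} (using $e^{f(\lambda^\star)}=\e B$), apply Theorem~\ref{thm:ballot_mean0} for small $k$, and dispose of the tail crudely. The only cosmetic differences are the split point ($\sqrt n$ versus the paper's $\log^2 n$) and the tail bound (trivially bounding the ballot probability by $1$ after the change of measure, versus the paper's direct Chernoff bound); both give a summable $O(n^{-3/2})$ term.
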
  
\begin{proof}Recall that $\mathscr L_n$ denotes the set of un-killed nodes $n$ levels away from the root. Using the size-biasing from (\ref{eq:size-bias}), we have
\begin{align*}
\E{Z} = \sum_{n\ge 0} \E{|\mathscr L_n|}=\sum_{n\ge 0} (\e B)^n \cdot \phatc{v_n\in \hat{\mathscr L}_n}
&=\sum_{n\ge 0} (\e B)^n \cdot \p{S_i\ge 0~ \forall 0\le i\le n}\\   
&=\sum_{n\ge 0} (\e B)^n \cdot \sum_{k\ge 0} \p{S_i\ge 0, 1\le i\le n; S_n=k}.                                
\end{align*}  
Since the killed branching random walk is critical, $e^{f(\lambda^\star)}=\e B$ and $\Lambda'(\lambda^\star)=0$.
Splitting the above sum at $k=\log^2 n$, using the transfer of the ballot result of Theorem~\ref{thm:ballot_mean0} into the regime of large deviations, and Chernoff's bound (Lemma~\ref{lem:chernoff}), we see that, for $n$ large enough, the terms of the series above satisfy
\begin{align*}
\E{|\mathscr L_n|}
&\le \e B)^n \cdot \sum_{k=0}^{\lfloor\log^2 n\rfloor}\p{S_i\ge 0, 0\le i\le n; S_n=k} + (\e B)^n \cdot \p{S_n \ge \log^2 n}\\
&\le  C_1 \sum_{k=0}^{\lfloor \log^2 n\rfloor} \frac{k e^{-\lambda^\star k}}{n^{3/2}} + e^{-\lambda^\star \log^2 n}\le C_2 n^{-3/2},
\end{align*}
for some constants $C_1,C_2$. It follows immediately that $\E{Z}<\infty$.
\end{proof}
        
We now use a similar lower bounding technique in order to complete the proof of Theorem~\ref{thm:critical}.

\begin{lem}\label{lem:zlogz}For a critical killed branching random walk, if 
$\E{B\log^8 B}< \infty$ then we have $\E{Z \log Z}=\infty$.
\end{lem}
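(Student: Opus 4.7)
The strategy extends the size-biasing approach of Lemma~\ref{lem:finite-mean} with a two-stage second-moment argument. In the first stage, summing Lemma~\ref{mknlowerbound} over $n \in [k^2, 2k^2]$ gives $\p{M = k} = \Omega(e^{-\lambda^\star k}/k)$, and on $\mathcal M_{k,n}$ there is a distinguished $v \in \mathscr L_n$ with $S_v = k$; the subtree $\mathscr L^v$ of its living descendants is a killed branching random walk rooted at height $k$.

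In the second stage, the goal is to show that $|\mathscr L^v|$ is exponentially large in $k$ with non-negligible probability. Let $Y_k$ count the living descendants of $v$ at generation $n + m_k$, with $m_k \asymp k^2$, whose displacement lies in a carefully chosen $O(1)$ window near the killing barrier. The first-moment estimate $\E{Y_k \mid v} = \Omega(e^{\lambda^\star k}/\mathrm{poly}(k))$ follows in the spirit of Lemma~\ref{lem:finite-mean}: apply size-biasing~(\ref{eq:size-bias}), pass to the centered walk via Lemma~\ref{lem:transfer}, and use the ballot estimates of Theorems~\ref{thm:ballot_mean0} and~\ref{thm:gbt_new}. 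The corresponding second-moment estimate $\E{Y_k^2 \mid v} \le \mathrm{poly}(k) \cdot \E{Y_k \mid v}^2$ is obtained by decomposing ordered pairs of descendants by the generation and displacement of their most recent common ancestor, and bounding each of the three resulting sub-paths by the same ballot estimates. Paley--Zygmund then yields $\p{Y_k \ge \E{Y_k \mid v}/2 \mid v} = \Omega(1/\mathrm{poly}(k))$.

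Combining the two stages, there is an event $F_k$ of probability $\Omega(e^{-\lambda^\star k}/\mathrm{poly}(k))$ on which $Z \ge Y_k \ge e^{\lambda^\star k}/\mathrm{poly}(k)$, so that $\log Z \gtrsim k$ once $k$ is large. Writing $\E{Z \log Z} = \int_0^\infty \p{Z \log Z > t}\,dt$ and parametrising $t$ by $k$ via $t_k := (\lambda^\star k/2) \cdot e^{\lambda^\star k}/\mathrm{poly}(k)$, the events $F_k$ contribute terms of order $(t_{k+1} - t_k) \cdot \p{F_k}$ to the integral. The sharpness of the constants coming out of Sections~\ref{sec:bahadur-rao} and~\ref{sec:ballot} is what forces the product of polynomial factors in the resulting bound to give a divergent series $\sum_k c_k$ with $c_k = \Omega(1/k)$.

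The principal obstacle is the second-moment bound on $Y_k$. Controlling the contribution of pairs of descendants with a recent most-recent common ancestor calls, in the sub-walk from $v$, for an analogue of the auxiliary constraint $S_{n-m} < k - m^{1/7}$ from Corollary~\ref{gnk}, without which the two-barrier ballot estimates would blow up. The combinatorial factor $\E{B(B-1)}$ arising in pair counting, together with the iterated Lemma~\ref{enin}-type estimates needed to bound the off-spine subtrees at every generation, is where the moment assumption $\E{B \log^8 B} < \infty$ ultimately enters.
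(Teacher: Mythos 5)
Your first stage coincides with the paper's (Lemma~\ref{mknlowerbound} summed over $n\in[k^2,2k^2]$ gives $\p{M=k}=\Omega(e^{-\lambda^\star k}/k)$, and one then studies the living descendants of the record particle), but your second stage contains a genuine gap: the series you end up with converges. The paper's series $\sum_k \p{M=k}\cdot e^{\lambda^\star k}=\Omega(\sum_k 1/k)$ diverges only marginally, so the argument cannot afford any polynomial loss beyond the two factors of $1/k$ already present, and your route incurs at least two extra such losses. First, your $Y_k$ counts descendants of $v$ at a \emph{single} generation $n+m_k$ in an $O(1)$ window near the barrier; the ballot estimate there is $\Theta(k/m_k^{3/2})=\Theta(1/k^2)$, so $\E{Y_k\mid v}=\Theta(e^{\lambda^\star k}/k^2)$ at best, a factor $k$ (or $k^2$, if the paths must also stay below $v$'s height for consistency with $\{M=k\}$) smaller than the all-generations count. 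Even with a \emph{perfect} Paley--Zygmund constant this gives $c_k\asymp t_k\p{F_k}\asymp k\cdot\frac{e^{\lambda^\star k}}{k^2}\cdot\frac{e^{-\lambda^\star k}}{k}=O(1/k^2)$, a convergent series. Second, the Paley--Zygmund factor $\E{Y_k\mid v}^2/\E{Y_k^2\mid v}$ is not $\Omega(1)$: in the pair decomposition over the most recent common ancestor at relative height $s$, the contribution is of order $e^{-\lambda^\star s}\cdot(e^{\lambda^\star(s+k)})^2=e^{2\lambda^\star k}e^{\lambda^\star s}$, so the second moment is inflated by rare high excursions of the ancestral line --- this non-concentration is exactly the phenomenon responsible for $\E{Z\log Z}=\infty$, and taming it costs further polynomial factors. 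Third, pair counting introduces $\E{B(B-1)}$, i.e.\ requires $\E{B^2}<\infty$, which is not implied by the hypothesis $\E{B\log^8 B}<\infty$ (in the paper that hypothesis is used only through Lemma~\ref{enin}, inside the lower bound for $\p{M=k}$).

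The paper sidesteps all of this with no second moment and no concentration statement: on the disjoint events $\mathcal M^u_k$ it applies the conditional Jensen inequality to the convex function $x\mapsto x\log x$, giving $\CExp{Z_u\log Z_u}{\mathcal M^u_k}\ge \CExp{Z_u}{\mathcal M^u_k}\log\CExp{Z_u}{\mathcal M^u_k}$. A \emph{first}-moment computation (size-biasing, Lemma~\ref{lem:transfer}, Corollary~\ref{fnk}, summed over the $\Theta(k^2)$ generations $n\in[k^2,2k^2]$) gives $\CExp{Z_u}{\mathcal M^u_k}=\Omega(e^{\lambda^\star k}/k)$, and the logarithm of this conditional mean supplies the factor $\Theta(k)$ that exactly cancels the $1/k$, yielding $\CExp{Z_u\log Z_u}{\mathcal M^u_k}=\Omega(e^{\lambda^\star k})$ and hence divergence. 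Replacing your second stage by this Jensen step repairs the proof; as written, the step ``the product of polynomial factors gives $c_k=\Omega(1/k)$'' is asserted but is false for the quantities you construct.
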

\begin{proof}
When $M= k$ occurs, there is a node $u \in \mathscr L$ such that $S_u =k$. For each $u \in \mathcal U$, 
denote by $\mathcal M^u_k$ the event that $M=k$ and that additionally $u \in \mathscr L$, $S_u =k$, and $u$ is the lexicographically least node in $\mathscr L$ for which $S_u=k$ (for this we recall that the nodes of $\mathcal U$ are labelled by $\bigcup_{n=0}^{\infty} \N^n$, for our notion of lexicographic ordering). The events $\mathcal M^u_k$ are disjoint for distinct $u$ and $k$, and 
\[
\{M = k\} =\bigcup_{u \in \mathcal U} \mathcal M^u_k, 
\]
so, writing $Z_u=|\{v\in \mathscr L: u\preceq v\}|$ for the set of living nodes in the subtree rooted at $u$, we have
\begin{align}
\E{Z\log Z} 	
& = \sum_{k \geq 0} \sum_{u \in \mathcal U} \CExp{Z\log Z}{\mathcal M^u_k} \p{\mathcal M^u_k} \nonumber\\
& \geq \sum_{k\geq 0} \p{M=k}\cdot \inf_{u \in U} \CExp{Z\log Z}{\mathcal M^u_k} \nonumber \\
& \geq \sum_{k\geq 0} \p{M=k}\cdot \inf_{u \in U} \CExp{Z_u\log Z_u}{\mathcal M^u_k} \label{mainequation}
\end{align}
To bound $\CExp{Z_u}{M^u_k}$ we must first re-express the events $\mathcal M^u_k$. 
Let $\mathcal E^u_k$ be the event that $u$ is the lexicographically least node in $\mathscr L$ for which $S_u=k$. 
Also, write 
$M_u$ for $\max\{S_v~:~v \in \mathscr L, u \not \preceq v\}$. Recall that $M^u$ is the 
maximum position {\em relative} to $u$ in the subtree of $\mathscr L$ rooted at $u$. 
Now fix $u \in \mathcal U$ arbitrarily, and express the event $\mathcal M^u_k$ as follows. 
\[
\mathcal M^u_k = \{S_u = k\} \cap \{u \in \mathscr L\} \cap \{M^u=0\} \cap \mathcal E^u_k \cap \{M_u \leq k\}.
\]
Given that $S_u=k$, $u \in \mathscr L$, and $M^u=0$, the random variable $Z_v$ is independent of the events 
$\mathcal E^u_k$ and $\{M_u \leq k\}$. Thus, 
\begin{align*}
\CExp{Z_u}{\mathcal M^u_k} 	& = \CExp{Z_u}{S_u=k, u \in \mathscr L, M^u=0} \\
						& \geq \CExp{Z_u\I{M^u=0}}{S_u=k, u \in \mathscr L}.
\end{align*}
Now let  $R^u= \max\{ S_v-S_u~:~v \in \mathcal T, u \preceq v\}$, and note that if 
$R^u=0$ then certainly $M^u=0$, and so the previous equation gives 
\[
\CExp{Z_u}{\mathcal M^u_k} \geq \CExp{Z_u\I{R^u=0}}{S_u=k, u \in \mathscr L}.
\]
Given that $S_u=k$ and that $u \in \mathscr L$, $Z_u$ consists of all descendants $v \in \mathcal T$ with $u \preceq v$ such that for all $w$ with $u \preceq w \preceq v$, $S_w - S_u \geq -k$, so 
\begin{align}
\CExp{Z_u}{\mathcal M^u_k}	& \geq \E{|\{v \in \mathcal T~:~S_w \geq -k~\forall~w \preceq v\}| \I{R=0}} \nonumber \\
						& \geq \E{|\{v \in \mathcal T~:~S_v=-k,0 \geq S_w \geq -k~\forall~w \preceq v\}| \I{R=0}} \nonumber \\
						& \geq \sum_{k^2 \leq n \leq 2k^2} \E{|\{v \in \mathcal T_n~:~S_v=-k,S_w \geq -k~\forall~w \preceq v\}| \I{R=0}}.\label{almostthere}
\end{align}
By size-biasing, we have 
\begin{align}
&\E{|\{v \in \mathcal T_n~:~S_{v_n}=-k,0 \geq S_w \geq -k~\forall~w \preceq v_n\}| \I{R=0}} \nonumber\\
&\qquad\qquad = (\e B)^n \cdot \phat{S_{v_n}=-k,0 \geq S_w \geq -k~\forall~w \preceq v_n, R=0} \nonumber\\
&\qquad\qquad = (\e B)^n \cdot \phat{S_{v_n}=-k,0 \geq S_{v_i} \geq -k~\forall~0 \leq i \leq n} \nonumber\\
& \qquad\qquad\qquad \times	\Cphatc{R=0}{S_{v_n}=-k,0 \geq S_{v_i} \geq -k~\forall~0 \leq i \leq n}
\label{sizebiased}
\end{align}
By an argument just as that used in Lemma \ref{mknlowerbound}, it is straightforward to see that 
there is $\gamma_0 > 0$ such that for all $k$ sufficiently large and all $k^2 \leq n \leq 2k^2$, 
\[
\Cphatc{R=0}{S_{v_n}=-k,0 \geq S_{v_i} \geq -k~\forall~0 \leq i \leq n} \geq \gamma_0. 
\]
Also, by Corollary \ref{fnk} applied to the random walk $\{S_{n-i}-S_n\}_{0 \leq i \leq n}$, together with  
Lemma \ref{lem:transfer}, we obtain that 
\[
\phat{v_n \in \mathcal T_n,S_{v_n}=-k,0 \geq S_{v_i} \geq -k~\forall~0 \leq i \leq n} 
= \Theta\pran{\frac{1}{k^3} \frac{e^{\lambda^\star k}}{(\e B)^n}}, 
\]
and so combining (\ref{almostthere}) and (\ref{sizebiased}) with the two preceding equations,
it follows that there exist $\gamma_1 > 0$ and $K_1 \geq 0$ such that for all $k \geq K_0$, 
\[
\CExp{Z_u}{\mathcal M^u_k} \geq \gamma_1 \sum_{k^2 \leq n \leq 2k^2} \frac{e^{\lambda^\star k}}{k^3} 
						= \gamma_1 \frac{e^{\lambda^\star k}}{k}.
\]
By the conditional Jensen's inequality applied to the convex function $x\mapsto x\log x$, 
we then have that for some $\gamma_2 > 0$ and $K_2 \geq 0$, for all $k \geq K_2$ and all $u \in \mathcal U$, 
\[
\CExp{Z_u\log Z_u}{\mathcal M^u_k} \geq \CExp{Z_u}{\mathcal M^u_k} \log \CExp{Z_u}{\mathcal M^u_k} \geq \gamma_2 e^{\lambda^* k}, 
\]
so by (\ref{mainequation}) and Lemma~\ref{maxlowerbound}
\[
\E{Z} \geq \gamma_2 \sum_{k \geq K_2} e^{\lambda^* k} \p{M = k} = \Omega\Bigg(\sum_{k \geq K_2} \frac{1}{k}\Bigg)=\infty.\qedhere
\]
\end{proof}   

\section{Proofs of the ballot results}\label{sec:proofs-ballot}
We first state two basic lemmas that will be useful in the proof of Theorem~\ref{thm:gbt_new}. 
We include a proof of the first, simple result for completeness. 
\begin{lem}\label{lem:easy_donsker}
	If $\E{X^2}<\infty$ then for all $\alpha$ and $\beta$ with $0 < \alpha < \beta$, there is $\gamma > 0$ such that for all $n$ large enough 
	and all $n'$ with $0 < n' \leq n$, 
	\[
		\p{ |S_{n'}| \leq \alpha\sqrt{n}, \max_{1 \leq i \leq n} |S_i| \leq \beta \sqrt{n}} \geq \gamma.
	\]
\end{lem}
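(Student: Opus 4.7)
The plan is to observe that since $\alpha<\beta$, the event $\{\max_{1\le i\le n}|S_i|\le\alpha\sqrt n\}$ is contained in the event whose probability we must lower bound, irrespective of $n'$. Indeed, if $\max_{1\le i\le n}|S_i|\le\alpha\sqrt n$, then on the one hand $|S_{n'}|\le\max_{1\le i\le n}|S_i|\le\alpha\sqrt n$, and on the other $\max_{1\le i\le n}|S_i|\le\alpha\sqrt n<\beta\sqrt n$. Hence, uniformly in $n'\in\{1,\ldots,n\}$,
\[
\p{|S_{n'}|\le\alpha\sqrt n,\ \max_{1\le i\le n}|S_i|\le\beta\sqrt n}\ \ge\ \p{\max_{1\le i\le n}|S_i|\le\alpha\sqrt n},
\]
and the right-hand side no longer depends on $n'$ at all.

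It then remains to show that this last probability is bounded below by a positive constant for all sufficiently large $n$. I would deduce this from Donsker's invariance principle. Since $\e X=0$ and $\sigma^2:=\E{X^2}\in(0,\infty)$, the linearly-interpolated rescaled walk $W^n\in C[0,1]$ determined by $W^n(i/n)=S_i/\sqrt n$ for $0\le i\le n$ converges in distribution to $\sigma B$, where $B$ is a standard Brownian motion and $C[0,1]$ carries the uniform norm. The set $U=\{f\in C[0,1]:\|f\|_\infty<\alpha\}$ is open, so by the portmanteau theorem,
\[
\liminf_{n\to\infty}\p{\|W^n\|_\infty<\alpha}\ \ge\ \p{\sigma \sup_{0\le t\le 1}|B_t|<\alpha}\ >\ 0,
\]
the final strict positivity being immediate (for instance from the reflection principle). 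A piecewise-linear function attains its sup-norm at a vertex, so $\{\|W^n\|_\infty<\alpha\}\subseteq\{\max_{1\le i\le n}|S_i|\le\alpha\sqrt n\}$; taking $\gamma$ to be any positive number strictly smaller than the limit above then yields the lemma for all $n$ sufficiently large.

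I do not anticipate any real obstacle: the argument is essentially a one-line domination which kills the dependence on $n'$, followed by a direct appeal to the functional central limit theorem. The only tiny care needed is the standard identification of the discrete maximum of the random walk with the sup-norm of the linearly-interpolated process.
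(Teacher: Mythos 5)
Your proof is correct, and it takes a genuinely different (and simpler) route than the paper's. Your key observation is that since $\alpha<\beta$, the single event $\{\max_{1\le i\le n}|S_i|\le\alpha\sqrt n\}$ is contained in the target event for every $n'\le n$, so the dependence on $n'$ vanishes and one application of Donsker's theorem --- via the portmanteau theorem applied to the open set $\{f:\|f\|_\infty<\alpha\}$, plus the remark that the linear interpolation attains its sup-norm at a lattice point --- finishes the job. The paper instead keeps the two constraints separate: it first invokes a Brownian-motion estimate giving a lower bound, uniform in $n'$, for the joint event that $|W_{n'}|\le\tfrac{\alpha}{2}\sqrt n$ while the running maximum stays below $\tfrac{\alpha+\beta}{2}\sqrt n$, and then transfers this to the random walk by coupling $S$ with $W$ so that $\max_{1\le k\le n}|S_k-W_k|\le\tfrac12\min\{\alpha,\beta-\alpha\}\sqrt n$ outside an event of small probability. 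Your argument buys a real simplification: it avoids the coupling step entirely (which, as written in the paper, really requires a Skorokhod-type strong approximation rather than bare weak convergence), at the cost of establishing only the cruder containment --- but since the lemma asks merely for \emph{some} positive $\gamma$, and its use in the proof of Theorem~\ref{thm:gbt_new} needs nothing more, nothing is lost.
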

\begin{proof}
	We assume for simplicity that $\E{X^2}=1$. 
	Let $W$ be a standard Brownian motion. By Exercise III.3.15 in \cite{ReYo2004}, there is $\gamma > 0$ such that for all $n$ and $n'$ with $0 < n' \leq n$, 
	\[
		\p{\sup_{0 \leq t \leq {n'}} |W_t| \geq \frac{\beta+\alpha}{2}\cdot \sqrt{n}, |W_{n'}| \leq \frac{\alpha}{2}\cdot \sqrt{n} } \geq 2\gamma. 
	\]
	Furthermore, by Donsker's theorem \cite{Donsker1952} (see also \cite{RoWi2000}, I.8.3), the random walk $S$ can be embedded in $W$ such that for all $n$ large enough, 
	\[
		\p{\max_{1 \leq k \leq n} |S_k - W_k| \geq \frac{\min\{\alpha,\beta-\alpha\}}{2}\cdot \sqrt{n}} \leq \gamma. 
	\]
	Combining the two preceding bounds completes the proof. 
\end{proof}
We will also use the following lemma, Lemma 3.3 from \citep{pemantle95critical}:
\begin{lem}\label{lem:pem_per}
	Let $S_n$, $n\ge 0$ be a random walk with step $X$, $\E{X}=0$. For $h \geq 0$, let $N_h$ be the first time $n\ge 0$ that $S_n < -h$. Then for any $a > 0$ there are constants $c_1,c_2,c_3$ such that for all~$n$:
	\begin{compactenum}[\rm (a)]
		\item for all $h$ with $0 \leq h \leq a\sqrt{n}$, $\p{N_h \geq n} \geq c_1\cdot (h+1)/\sqrt{n}$; 
		\item for all $h$ with $0 \leq h \leq a\sqrt{n}$, $\CExp{S_n^2}{N_h > n} \leq c_2 n$; and 
		\item for all $h\geq 0$, $\p{N_h \geq n} \leq c_3 \cdot (h+1)/\sqrt{n}$.
	\end{compactenum}
\end{lem}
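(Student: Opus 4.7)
The three parts are of quite different flavors, so I would handle them separately, leveraging Theorem~\ref{thm:ballot_mean0} for (a) and (b) and falling back on classical invariance-principle techniques for (c).

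For part (a), the lower bound is a one-line consequence of Theorem~\ref{thm:ballot_mean0}. Since $\{N_h \geq n\} \supseteq \bigcup_{k \geq 1}\{S_n = k,\, S_i \geq -h \;\forall\, 0<i<n\}$, I would sum Theorem~\ref{thm:ballot_mean0} with $m=h$ over $1 \leq k \leq \lfloor a\sqrt{n}\rfloor$:
\[
\p{N_h \geq n} \;\geq\; \sum_{k=1}^{\lfloor a\sqrt{n}\rfloor} \Theta_a\!\left(\frac{(h+1)(k+h+1)}{n^{3/2}}\right) \;=\; \Theta\!\left(\frac{h+1}{\sqrt{n}} + \frac{h(h+1)}{n}\right),
\]
and the second term is absorbed into the first under $h \leq a\sqrt{n}$.

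For part (b), I would use the matching two-sided estimate in Theorem~\ref{thm:ballot_mean0} to compute the numerator of $\CExp{S_n^2}{N_h>n}=\E{S_n^2\mathbf{1}_{N_h>n}}/\p{N_h>n}$ directly. Splitting at $|S_n| \leq C\sqrt{n}$ for $C$ large,
\[
\E{S_n^2 \mathbf{1}_{N_h>n,\,|S_n|\leq C\sqrt{n}}} \;=\; \sum_{k=-h}^{C\sqrt{n}} k^2 \cdot \Theta\!\left(\frac{(h+1)(k+h+1)}{n^{3/2}}\right) \;=\; O\!\left((h+1)\sqrt{n}\right)
\]
under $h\leq a\sqrt{n}$. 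The tail contribution $\E{S_n^2 \mathbf{1}_{|S_n| \geq C\sqrt{n}}}$ is controlled by uniform integrability of $S_n^2/n$, since $S_n/\sqrt{n}$ converges in distribution to a centered Gaussian (by the CLT and $\E{X^2}<\infty$); choosing $C$ large makes this $o(1)\cdot (h+1)\sqrt{n}$. Dividing by the lower bound from part (a) gives $\CExp{S_n^2}{N_h>n} = O(n)$.

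Part (c) is the main obstacle: the bound must hold \emph{uniformly} in $h$, so I cannot simply sum Theorem~\ref{thm:ballot_mean0}, which applies only in the Gaussian window $k = O(\sqrt{n})$. First, the statement is trivial when $h \geq \sqrt{n}$, so I would reduce to $h \leq \sqrt{n}$. In this regime I would couple $S$ to a standard Brownian motion via Skorokhod embedding, writing $S_k = W_{T_k}$ with $T_k - T_{k-1}$ i.i.d.\ of mean $\sigma^2 = \E{X^2}$. On the event $\{T_n \leq 2n\sigma^2\}$, containment $\{N_h > n\} \subseteq \{\inf_{0 \leq t \leq 2n\sigma^2} W_t \geq -h\}$ reduces the estimate to the reflection-principle bound $\p{\inf_{[0,T]} W \geq -h} = O((h+1)/\sqrt{T})$. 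The complementary event $\{T_n > 2n\sigma^2\}$ must be handled with care, as $\E{X^2} < \infty$ alone does not give $\mathrm{Var}(T_1) < \infty$; a clean way around this is a truncation argument on the jumps of $W$ in the embedding, or alternatively an appeal to the one-sided weak law of large numbers for $T_n/n$ combined with a union bound over dyadic time intervals. Either route gives a residual contribution of lower order than $(h+1)/\sqrt{n}$.

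The essential difficulty lies in part (c) for small $h$, where one genuinely needs the Gaussian-scale geometry of the walk in a uniform (not pointwise) sense; the tools stated in Section~\ref{sec:ballot} give only the pointwise asymptotics, so invariance-principle input (Donsker/Skorokhod) or a classical ladder-height analysis in the spirit of Feller's Volume~II is unavoidable. This is likely why the authors cite the bound rather than re-prove it.
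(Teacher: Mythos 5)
The paper does not prove this lemma: it is imported verbatim as Lemma~3.3 of \citet{pemantle95critical}, so there is no internal proof to compare against and your attempt must stand on its own. Your part (a) is fine: summing the lower bound of Theorem~\ref{thm:ballot_mean0} with $m=h$ over $1\le k\le\sqrt n$ does give $\Omega((h+1)/\sqrt n)$, and since within this paper Theorem~\ref{thm:ballot_mean0} is itself an imported black box there is no formal circularity (though be aware that the proofs in \cite{AdRe2008} rest on first-passage estimates of exactly this type, so this inverts the usual logical order).

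Part (b) has a genuine gap. You need $\E{S_n^2\I{N_h>n}}=O\bigl((h+1)\sqrt n\bigr)$, and your in-window sum correctly gives this for $|S_n|\le C\sqrt n$ (modulo the fact that Theorem~\ref{thm:ballot_mean0} requires $k>0$, so the range $-h\le k\le 0$ needs the reversed walk). But for the tail you discard the indicator $\I{N_h>n}$ and invoke uniform integrability of $S_n^2/n$, which only yields $\E{S_n^2\I{|S_n|\ge C\sqrt n}}\le\epsilon(C)\,n$ with $\epsilon(C)$ a fixed positive constant for fixed $C$. When $h=O(1)$ the target is $O(\sqrt n)$, so this bound is off by a factor of order $\sqrt n$ no matter how large $C$ is taken. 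The barrier event must be retained when estimating the tail --- e.g.\ by splitting the walk at time $\lfloor n/2\rfloor$, using $\p{N_h>\lfloor n/2\rfloor}=O((h+1)/\sqrt n)$ for the first half and moment bounds for the independent second half --- and closing that argument is precisely the nontrivial content of (b).

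Part (c) also contains an error as written. Under the Skorokhod embedding $S_k=W_{T_k}$, the containment $\{N_h>n\}\subseteq\{\inf_{0\le t\le T_n}W_t\ge -h\}$ is false: the walk values are the Brownian motion sampled at the times $T_k$, and $W$ may well dip below $-h$ strictly between consecutive embedding times while every $S_k$ stays above $-h$. Moreover, even granting a coupling, the reflection-principle bound $\p{\inf_{[0,T]}W\ge -h}=\p{|W_T|<h}=O(h/\sqrt T)$ degenerates at $h=0$, whereas the discrete statement $\p{\min_{i<n}S_i\ge 0}=O(1/\sqrt n)$ is exactly the hard case. The standard proof of (c) avoids invariance principles entirely: Sparre Andersen's identity plus a Tauberian argument gives $\p{\min_{i\le n}S_i\ge 0}=\Theta(n^{-1/2})$, and the factor $(h+1)$ then comes from the renewal function of the descending ladder heights (Feller, Vol.~II, Ch.~XII), which is $O(h+1)$ when $\E{X^2}<\infty$. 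You allude to this route but do not carry it out; as your proposal stands, neither (b) nor (c) is established.
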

We additionally require the following uniform local limit theorem. This is a weakening of Theorem 1 from \cite{stone65local}. (See also \cite{petrov75sums}.) 
\begin{thm}[\cite{stone65local}]
\label{thm:stone}
Fix any $c > 0$ and a random variable $X$ with lattice $\Z$. If $\e{X}=0$ and $0 < \Ec{X^2} < \infty$ 
then for all integers $x$ with $|x| \leq c \sqrt{n}$, 
\[
\p{S_n = x} = (1+o(1))\frac{e^{-x^2/(2n\Ec{X^2})}}{\sqrt{2\pi\Ec{X^2}n}},
\]
where $o(1) \rightarrow 0$ as $n \rightarrow \infty$ uniformly over all $x$ in the allowed range. 
\end{thm}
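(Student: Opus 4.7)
The plan is to prove the local limit theorem by Fourier inversion, comparing the characteristic function representation of $\p{S_n=x}$ with that of the corresponding normal density. Since $X$ is integer-valued, we have the exact identity
\[
\p{S_n = x} = \frac{1}{2\pi} \int_{-\pi}^{\pi} e^{-itx}\, \phi(t)^n \, dt,
\]
where $\phi(t) = \e{e^{itX}}$. Writing $\sigma^2 = \e{X^2}$, Fourier inversion of the Gaussian gives
\[
\frac{e^{-x^2/(2n\sigma^2)}}{\sqrt{2\pi n \sigma^2}} = \frac{1}{2\pi} \int_{-\infty}^{\infty} e^{-itx} e^{-n\sigma^2 t^2/2}\, dt.
\]
After the substitution $t = s/\sqrt n$, both quantities become $\frac{1}{2\pi\sqrt n}$ times an integral whose integrand differs by at most $|\phi(s/\sqrt n)^n - e^{-\sigma^2 s^2/2}|$, since the factor $e^{-isx/\sqrt n}$ has modulus one. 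This observation is what delivers uniformity in $x$ for free, so the whole problem reduces to showing
\[
\int |\phi(s/\sqrt n)^n - e^{-\sigma^2 s^2/2}|\, ds \longrightarrow 0,
\]
where the integral ranges over $|s| \le \pi\sqrt n$ on the walk side and over $\R$ on the Gaussian side.

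The next step is to split the region of integration into three pieces: $|s| \le A$, $A \le |s| \le \delta\sqrt n$, and $\delta\sqrt n \le |s| \le \pi\sqrt n$, for suitable constants $A$ (large) and $\delta$ (small). On the central piece $|s| \le A$, the standard CLT computation gives $n\log\phi(s/\sqrt n) \to -\sigma^2 s^2/2$ pointwise, and since $|s|$ is bounded and $|\phi|\le 1$, dominated convergence yields a contribution $o(1)$ to the integral. On the intermediate annulus, one uses the quadratic upper bound $|\phi(u)| \le 1 - cu^2$ for $|u| \le \delta$ (valid since $\phi''(0) = -\sigma^2$), which upgrades to $|\phi(s/\sqrt n)^n| \le e^{-cs^2}$ throughout this annulus; the Gaussian piece is exponentially small as well, so a dominated-convergence argument together with letting $A\to\infty$ at the end handles this region. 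On the outer piece $\delta\sqrt n \le |s| \le \pi\sqrt n$, the lattice hypothesis enters: because $X$ has lattice $\Z$, $|\phi(t)|<1$ for all $t\in(-\pi,\pi)\setminus\{0\}$ and by continuity $\rho := \sup_{\delta \le |t| \le \pi} |\phi(t)| < 1$, giving $|\phi(s/\sqrt n)^n| \le \rho^n$, which is exponentially small; the tail of the Gaussian integral is likewise negligible since $s^2 \ge \delta^2 n$.

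The principal obstacle is the intermediate annulus $A \le |s| \le \delta\sqrt n$, because with only a second-moment hypothesis there is no pointwise uniform Taylor remainder of the form $|\log\phi(u) + \sigma^2 u^2/2| = O(u^3)$. The remedy is not to compare $\phi(s/\sqrt n)^n$ to $e^{-\sigma^2 s^2/2}$ term by term on this region, but simply to dominate each separately: the quadratic bound $|\phi(u)| \le 1 - cu^2$ gives $|\phi(s/\sqrt n)^n| \le e^{-cs^2}$, which is integrable and vanishes on $|s|\ge A$ as $A\to\infty$; the Gaussian is handled the same way. Combining these three estimates proves that
\[
\sup_{|x| \le c\sqrt n}\bigg|\sqrt{2\pi\sigma^2 n}\cdot\p{S_n = x} - e^{-x^2/(2n\sigma^2)}\bigg| \longrightarrow 0,
\]
and then dividing by $e^{-x^2/(2n\sigma^2)}$ (which is bounded below by $e^{-c^2/(2\sigma^2)}$ in the allowed range of $x$) converts this additive statement into the desired multiplicative $(1+o(1))$ form.
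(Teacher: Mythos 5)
Your argument is correct. Note, however, that the paper does not prove this statement at all: it is quoted as a (weakened) special case of Theorem~1 of Stone's 1965 paper, so there is no internal proof to compare against. What you have written is the classical Gnedenko-style proof of the lattice local limit theorem via characteristic functions, and it is a sound, self-contained derivation of exactly the version the paper needs: the Fourier inversion identity for integer-valued $S_n$, the observation that $|e^{-isx/\sqrt n}|=1$ gives uniformity in $x$ for free, the three-way split of the integration range, and --- the one genuinely delicate point under only a second-moment hypothesis --- the decision to handle the annulus $A\le|s|\le\delta\sqrt n$ by dominating $|\phi(s/\sqrt n)|^n\le e^{-cs^2}$ and the Gaussian separately rather than by a term-by-term Taylor comparison, which would require a third moment. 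The use of the lattice-$\Z$ hypothesis to get $\sup_{\delta\le|t|\le\pi}|\phi(t)|<1$ on the outer range is also exactly right. The only thing your write-up gains over the paper's citation is self-containedness; what it gives up is the stronger uniform version of Stone (and Petrov) that the paper alludes to elsewhere, but that stronger form is not needed for the statement as given.
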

Finally, a useful trick, both in proving Theorem~\ref{thm:gbt_new} and when applying the theorem and its corollaries, 
is to turn the random walk ``upside-down and backwards''. 
By this we mean that we consider the random walk $S^r$ with $S^r_0=0$ and, for $0 \leq i < n$, 
with 
\[
S^r_{i+1} = -(X_n+\ldots+X_{n-i}) = S^r_{i}-X_{n-i}. 
\]
We refer to $S^r$ as ``the reversed random walk''. 
\begin{proof}[Proof of Theorem~\ref{thm:gbt_new}] 
For simplicity, we assume that $\Ec{X^2}=1$. 
We also assume that $k \geq 0$, as the case $k < 0$ follows from the case $k \geq 0$ by considering the reversed random walk $S^r$. 
The upper bound of the theorem is immediate from Theorem~\ref{thm:ballot_mean0} as the requirements in Theorem~\ref{thm:gbt_new} are more restrictive. 
To prove the lower bound, first let $\delta = \min\{\epsilon/11,1/(5c_3)\}$, where $c_3$ is the constant from Lemma \ref{lem:pem_per}. 
Let $N_-$ be the first time $n\ge 0$ that $S_n \leq -m$, and let $N_+$ be the first time $n\ge 0$ that $S_n \ge 5\delta\sqrt{n}$. 
The events that $N_- \geq \lfloor n/4\rfloor$ and that $N_+ \leq \lfloor n/4\rfloor$ are increasing in the values of the random walk steps, so they are positively correlated and by FKG inequality \cite{Harris1960,FoKaGi1971,AlSp2008}
\begin{eqnarray*}
	\p{N_+ \leq \min\{\lfloor n/4\rfloor,N_-\}} 	& \geq & \p{N_+ \leq \lfloor n/4\rfloor,N_- \geq \lfloor n/4\rfloor} \\
									& \geq & \p{N_+ \leq \lfloor n/4\rfloor}\cdot\p{N_- \geq \lfloor n/4\rfloor}.
\end{eqnarray*}
By Lemma \ref{lem:pem_per}, 
\[
\p{N_- \geq \lfloor n/4\rfloor} \geq \frac{c_1(m+1)}{\sqrt{\lfloor n/4 \rfloor}} \geq \frac{2c_1(m+1)}{\sqrt{n}},
\]
and 
\[
\p{N_+ \leq \lfloor n/4\rfloor} \geq  1- \frac{c_3(2\delta\sqrt{n}+1)}{\sqrt{\lfloor n/4\rfloor}} \geq \frac{1}{2},
\]
for $n$ large enough. So for $n$ large enough, 
\begin{equation}\label{eq:gbt_new2_1}
\p{N_+ \leq \min\{\lfloor n/4\rfloor,N_-\}} \geq \frac{c_1(m+1)}{\sqrt{n}}. 
\end{equation}
Next, since $\E{|X|^3}<\infty$, we have $\p{X \geq t} = o(t^{-3})$, so by the union bound, for all $n$ large enough, 
\[
\p{\max_{1 \leq i \leq n} X_i \geq \delta \sqrt{n}} \leq \frac{c_1}{2\sqrt{n}}.
\]
It follows from this fact and 
(\ref{eq:gbt_new2_1}) that 
\begin{align}\label{eq:gbt_new2_2}
\p{N_+ \leq \min\{\lfloor n/4\rfloor,N_-\}, S_{N_+} \leq 6\delta\sqrt{n}} 
&\geq \p{N_+ \le \min\{\lfloor n/4\rfloor,N_-\}} - \p{\max_{1 \leq i \leq n} X_i > \delta\sqrt{n}}\nonumber\\
& \geq \frac{c_1(m+1)}{2\sqrt{n}}.
\end{align}
Applying Lemma \ref{lem:easy_donsker} to the random walk restarted at time $N_+$, we see that there is $\gamma_1 > 0$ such that for any fixed 
$\alpha$ with $1/4 < \alpha < 1/2$, and $n$ large enough
\[ 
\p{|S_{\lfloor \alpha n\rfloor}-S_{N_+}| \leq \delta \sqrt n, \max_{N_+ < i \leq \lfloor \alpha n\rfloor} |S_i - S_{N_+}| \leq 2\delta\sqrt n} \geq \gamma_1. 
\] 
From this fact and (\ref{eq:gbt_new2_2}), it follows by the strong Markov property and the fact that $8\delta < \epsilon$ that 
\begin{equation}\label{eq:gbt_new2_3} 
\p{k+4\delta \sqrt{n} \leq S_{\lfloor \alpha n\rfloor} \leq k+7\delta\sqrt{n}, -m \leq S_i < k+\epsilon\sqrt{n}~\forall~0 < i \leq \lfloor \alpha n\rfloor} \geq \frac{c_1\gamma_1(m+1)}{2\sqrt{n}}. 
\end{equation} 
To shorten coming formulas, let $E_1$ be the event whose probability is bounded in (\ref{eq:gbt_new2_3}). 

Next, let $S^r$ be the random walk with $S^r_0=0$ and, for $0 \leq i < n$, $S_{i+1}^r = S_i^r - X_{n-i}$. Just as 
we derived (\ref{eq:gbt_new2_3}), one can see that there is $\gamma_2 > 0$ such that for $n$ sufficiently large, 
\begin{equation}\label{eq:gbt_new2_4}
\p{4\delta \sqrt{n} \leq S^r_{\lfloor \alpha n\rfloor} \leq 7\delta\sqrt{n}, -(k+m)\leq S_i^r < \epsilon\sqrt{n}~\forall~0 < i \leq \lfloor \alpha n\rfloor} \geq \frac{c_1\gamma_2(k+m+1)}{2\sqrt{n}}.
\end{equation}
We denote by $E_2$ the event whose probability is bounded in (\ref{eq:gbt_new2_4}). Also, let 
$Y = S_{\lfloor \alpha n \rfloor} - S_{\lfloor \alpha n \rfloor}^r$, 
so that 
\[
S_n = Y+\sum_{i=\lfloor \alpha n \rfloor+1}^{\lceil(1-\alpha) n\rceil} X_i.
\]
Observe that, if $E_1\cap E_2$ occurs, necessarily $3\delta\sqrt{n} \leq k-Y \leq 3\delta\sqrt{n}$. To see this, note 
for example that $k-Y = -3\delta\sqrt{n}$ can only occur if $S_{\lfloor \alpha n\rfloor} = k+7\delta\sqrt{n}$ 
and $S^r_{\lfloor \alpha n\rfloor}=4\delta\sqrt{n}$. 

Let $q =\lceil (1-\alpha)n\rceil-\lfloor \alpha n \rfloor$, and for $1 \leq i \leq q$, let 
	\[
	L_i = S_{\lfloor \alpha n \rfloor+i}-S_{\lfloor \alpha n \rfloor}, \hspace{0.2cm} \mbox{and let} \hspace{0.2cm} R_i=S_{\lceil (1-\alpha)n\rceil-i}-S_{\lceil (1-\alpha)n\rceil},
	\] 
	so in particular $L_q=-R_q$. 
Given that $E_1$ and $E_2$ occur, in order that $S_n=k$, that $S_i \geq -m$ for all $i=1,\ldots,n$, 
and that $S_i < k+\epsilon\sqrt{n}$ for all $0 < i < n$, it suffices that 
\begin{itemize}
	\item $L_m = S_{\lceil (1-\alpha)n\rceil} - S_{\lfloor \alpha n \rfloor} = k-Y$ 
		(we call this event $E_3$), and 
	\item for all $i$ with $1 \leq i \leq q$, 
		\[
				\min\{k-Y,0\}-\delta\sqrt{n} \leq L_i < \max\{k-Y,0\} + \delta\sqrt{n}
		\] 
		(we call this event $E_4$).
\end{itemize}
 \begin{figure}[htb]
 \centering
 \includegraphics[width=0.65\textwidth]{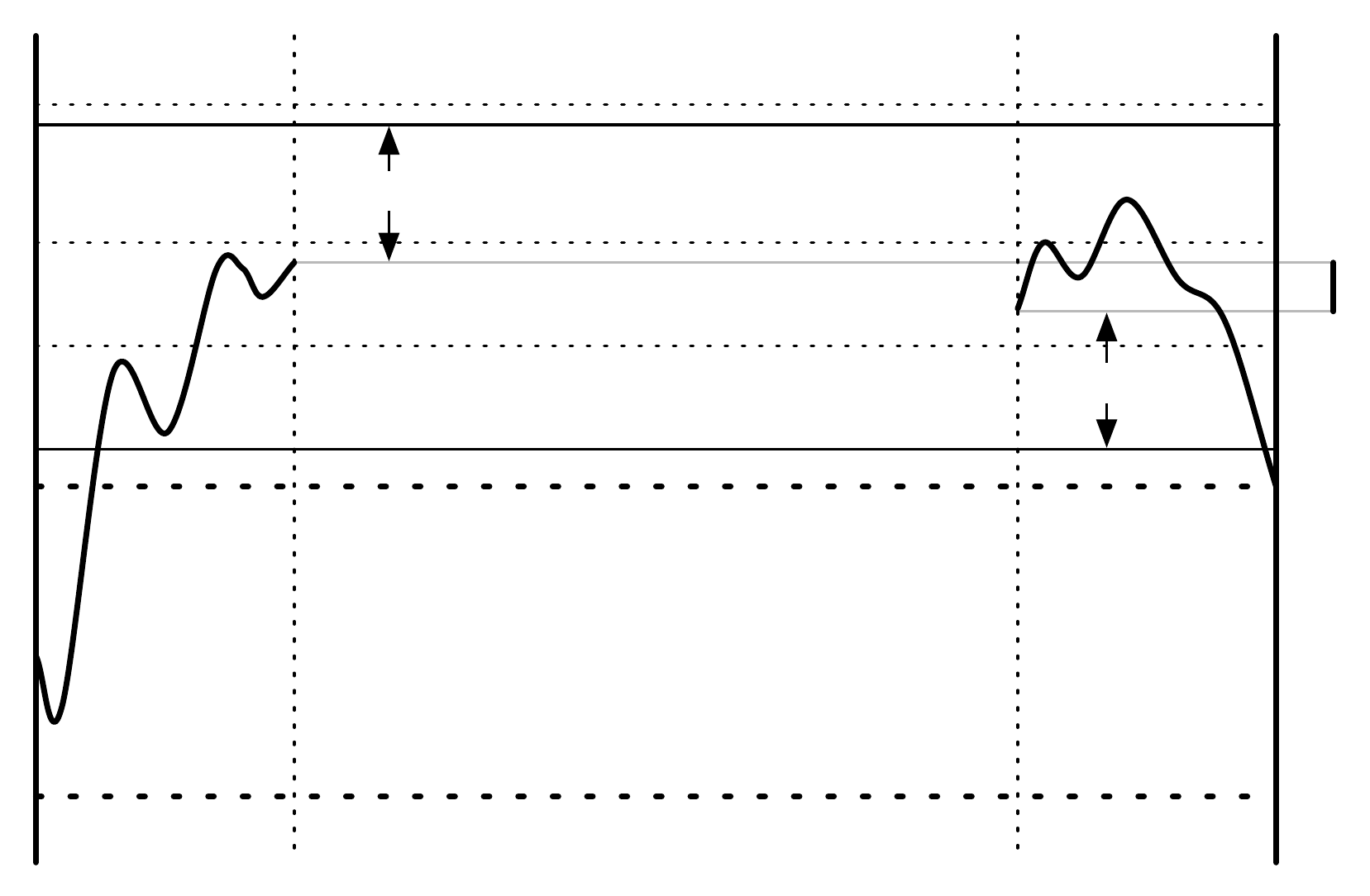}
\put(-279,82){$\scriptstyle{k}$}
\put(-288,18){$\scriptstyle{-m}$}
 \put(-3,121){$\scriptstyle{k-Y}$}
 \put(-305,131){$\scriptstyle{k+7\delta\sqrt{n}}$}
\put(-305,109){$\scriptstyle{k+4\delta\sqrt{n}}$}
\put(-301,158){$\scriptstyle{k+\epsilon\sqrt{n}}$}
\put(-64,103){$\scriptstyle{4\delta\sqrt{n}}$}
\put(-210,142){$\scriptstyle{4\delta\sqrt{n}}$}
\put(-272,-3){$\scriptstyle{0}$} 
\put(-279,46){$\scriptstyle{0}$} 
\put(-233,-3){$\scriptstyle{\lfloor\alpha n\rfloor}$}
\put(-105,-3){$\scriptstyle{\lceil(1-\alpha) n\rceil}$}
\put(-21,-3){$\scriptstyle{n}$}
 \caption{$E_3$ ensures that the middle portion of the random walk ``lines up'' with the outer portions. $E_4$ requires that the middle portion of the random walk stays between the two solid black horizontal lines. 
Given that $E_1$ and $E_2$ occur, these two horizontal lines must lie between $k$ and $k + \epsilon \sqrt{n}$, 
 so $E_4$ (more than) ensures that the middle portion of the random walk stays between $-m$ and $k + \epsilon \sqrt{n}$.}\label{fig:ballot}
 \end{figure}
 These events are depicted in Figure~\ref{fig:ballot}. 
 By (\ref{eq:gbt_new2_3}) and (\ref{eq:gbt_new2_4}), to prove the lower bound it thus suffices to show that 
there is $\gamma_3>0$ not depending on $m$, $k$, or $n$ such that for all $n$ sufficiently large,
\begin{equation}\label{eq:gbt_new2_5}
	\probC{E_3,E_4}{E_1,E_2} \geq \frac{\gamma_3}{\sqrt{n}}. 
\end{equation}
Assuming that (\ref{eq:gbt_new2_5}) holds, since $E_1$ and $E_2$ are independent, 
combining (\ref{eq:gbt_new2_3}), (\ref{eq:gbt_new2_4}), and (\ref{eq:gbt_new2_5}) proves the claimed lower bound and completes the proof. We now turn to establishing (\ref{eq:gbt_new2_5}). 

	Let $\ssn=[-3\delta\sqrt{n},3\delta\sqrt{n}]\cap \Z$. Since $|k-Y| \leq 3\delta\sqrt{n}$, by the independence of disjoint sections of the random walk,  we then have 
	\begin{equation}\label{eq:gbt_new6} 
	\probC{E_3,E_4}{E_1,E_2} \geq \min_{p \in\ssn} \p{L_q= p, \min_{1 \leq i \leq q} L_i \geq -p^{-} -4\delta\sqrt{n}, \max_{1 \leq i \leq q} L_i \leq p^{+} +4\delta\sqrt{n}},
	\end{equation}
	where $p^- = -\min\{p,0\}$ and $p^+=\max\{p,0\}$. 
	Now fix $p \in \ssn$ arbitrarily.  
	For the remainder of the proof we assume that $p \geq 0$, 
	since the proof for the case $p < 0$ is obtained mechanically from the proof of the former by 
	reversing the roles of the random walks $L$ and $R$. Thus, the above probability becomes 
	\[
	\p{L_q= p, \min_{1 \leq i \leq q} L_i \geq -\delta\sqrt{n}, \max_{1 \leq i \leq q} L_i \leq p +\delta\sqrt{n}}.
	\]
	Let $B_p$ be the event that $\min_{1 \leq i \leq q} L_i \geq -4\delta\sqrt{n}$ and that 
		$\max_{1 \leq i \leq q} L_i \leq p +4\delta\sqrt{n}$. 
	We bound $\p{L_q=p,B_p}$ by first writing 
	\begin{equation}\label{eq:gbt_new7}
		\p{L_q=p,B_p} \geq \p{L_q=p}-\p{L_q=p,\overline{B_p}},
	\end{equation}
	where $\overline{B_p}$ denotes the complement of the event $B_p$.
	By Theorem \ref{thm:stone}, since $q=\Omega(n)$ and $p=O(\sqrt{n})$, 
	\begin{equation}\label{eq:gbt_new8}
		\p{L_q=p} = (1+o(1))\frac{e^{-p^2/(2 q)}}{\sqrt{2\pi q}} = \Omega\pran{\frac{1}{\sqrt n}},
	\end{equation}
	where $o(1)\rightarrow 0$ as $n \rightarrow \infty$, uniformly over all $p \in \ssn$ (recall that 
	we assume $\Ec{X^2}=1$). 
	To bound $\p{L_q=p,\overline{B_p}}$ from above, we first further divide the events $\{L_q=p\}$ and $B_p$. Let $q'=\lfloor n/2\rfloor - \lfloor \alpha n \rfloor$. Observe that 
	$\{L_q=p\}$ occurs if and only if $R_q = -p$. Similarly, if $\{L_q=p\}$ occurs, then for $\overline{B_p}$ to occur one of the following events must occur: either
	\begin{enumerate}
		\item $\min_{1 \leq i \leq q'} L_i < -4\delta\sqrt{n}$ (we call this event $C^{b}$); or 
		\item $\max_{1 \leq i \leq q'} L_i > p +4\delta\sqrt{n}$ (we call this event $C^{t}$); or 
		\item $\min_{1 \leq i \leq q-q'} R_i < -(p + 4\delta\sqrt{n})$ (we call this event $D^b$); or 
		\item $\max_{1 \leq i \leq q-q'} R_i > 4\delta\sqrt{n}$ (we call this event $D^t$). 
	\end{enumerate}
	Thus, 
	\begin{equation}\label{eq:gbt_new9}
		\p{L_q=p,\overline{B_p}} \leq \p{L_q=p,C^b} + \p{L_q=p,C^t} + \p{R_q=-p,D^b}+\p{R_q=-p,D^t}.
	\end{equation} 
	To complete the proof, it suffices to show that the sum on the right-hand side of (\ref{eq:gbt_new9}) 
	is at most $(1+o(1))e^{-p^2/(2 q)}/(2\sqrt{2\pi q})$, as 
	(\ref{eq:gbt_new6}) and (\ref{eq:gbt_new7}), and (\ref{eq:gbt_new8}) then imply that 
	$\probC{E_3,E_4}{E_1,E_2} = \Omega(1/\sqrt{n})$, as required. 
	We will show that each of the four terms on the right-hand side of (\ref{eq:gbt_new9}) is at most 
	$(1+o(1))e^{-p^2/(2 q)}/(8\sqrt{2\pi q})$, from which the required bound follows. 
	We provide all the details only for the bound on $\p{C^t, L_q=p}$, as the other bounds follow by rote 
	applications of the same technique. 
	
	Since $\Ec{|X|^3}< \infty$, we have $\p{\max_{1 \leq i \leq q'} |X_{\lfloor \alpha n \rfloor + i}| \geq \delta\sqrt{n}} = o(1/\sqrt{n})$, and so
	\begin{equation}\label{eq:gbt_new9b}
	\p{C^t,L_q=p} \leq \p{C^t,L_q=p,\max_{1 \leq i \leq q'} |X_{\lfloor \alpha n \rfloor + i}| < \delta\sqrt{n}} + o\pran{\frac{1}{\sqrt{n}}}.
	\end{equation}
	By Kolmogorov's maximal inequality \cite[See, e.g.,][]{Feller1968,petrov75sums}, 
	\begin{align}\label{eq:gbt_new10}
		\p{C^t,\max_{1 \leq i \leq q'} |X_{\lfloor \alpha n \rfloor + i}| < \delta\sqrt{n}} \leq \p{C^t} =
		&  \p{\max_{1 \leq i \leq q'} L_i > p +4\delta\sqrt{n}}\nonumber\\
		& \leq \frac{\Ec{L_q^2}}{(4\delta\sqrt{n})^2} \nonumber\\
		& = \frac{\Ec{X^2}\cdot q}{16\delta^2 n} \nonumber\\
		& \leq \frac{((1-2\alpha)n+1)}{16\delta^2 n} \leq \frac{1}{16}
	\end{align}
	for all $n$ sufficiently large, as long as we take $\alpha$ close enough to $1/2$ that 
	$(1-2\alpha) < \delta^2$.
	Furthermore, by the independence of disjoint sections of the random walk 
	and a simple conditioning, we have that 
	\begin{align*}
	\probC{L_q=p}{C^t,\max_{1 \leq i \leq q'} |X_{\lfloor \alpha n \rfloor + i}| < \delta\sqrt{n}} 
	& \leq  
	\hspace{-0.2cm}
	\mathop{\max_{1 \leq i \leq q'}}_{4\delta\sqrt{n} \leq j \leq 5\delta\sqrt{n}} 
	\hspace{-0.3cm}	
	\probC{L_q=p}{L_i=p+j,L_{i-1}\leq p+4\delta\sqrt{n} }. \\
	& = \mathop{\max_{1 \leq i \leq q'}}_{4\delta\sqrt{n} \leq j \leq 5\delta\sqrt{n}}  \p{S_{q-i} = -j}
	\end{align*}
	For any $i$ with $1 \leq i \leq q'$, we have 
	$q-i \geq q-q' \geq \lceil (1-\alpha)n \rceil - \lfloor n/2 \rfloor \geq q/2 = \Omega(n)$ and $j = O(\sqrt{n})$, 
	and it follows by Theorem~\ref{thm:stone} that 
	\begin{align*}
	\probC{L_q=p}{C^t,\max_{1 \leq i \leq q'} |X_{\lfloor \alpha n \rfloor + i}| < \delta\sqrt{n}} 
	& \leq (1+o(1))\max_{1 \leq i \leq q',4\delta\sqrt{n} \leq j \leq 5\delta\sqrt{n}} 
		\frac{e^{-j^2/(2(q-i))}}{\sqrt{2\pi(q-i)}} \\
	& \leq (1+o(1)) \frac{e^{-p^2/(2 q)}}{\sqrt{\pi  q }},
	\end{align*}
	the second inequality holding since $j \geq 4\delta\sqrt{n} > |p|$ and since $q-i \geq q/2$. 
	Combined with (\ref{eq:gbt_new9b}) and (\ref{eq:gbt_new10}), the latter inequality yields that 
	\[
	\p{C^t,L_q=p} \leq (1+o(1)) \frac{e^{-p^2/(2q)}}{16 \sqrt{\pi  q }} 
				\leq (1+o(1)) \frac{e^{-p^2/(2 q)}}{8\sqrt{2\pi  q }}. 
	\]
	An essentially identical proof shows that the same bound holds for $\p{C^b,L_q=p}$, and 
	the same reasoning applied to the reversed random walk $R$ shows that the same bound holds for 
	$\p{R_q=-p,D^b}$ and for $\p{R_q=-p,D^t}$. Combining these four bounds in 
	(\ref{eq:gbt_new9}) yields the required bound on $\p{L_q=p,\overline{B_p}}$ and completes the proof. 
\end{proof}

The following result strengthens Corollary \ref{fnk}; the strengthened version will be helpful in proving Corollary \ref{gnk}. Taking $m=0$ yields Corollary~\ref{fnk}.
\begin{lem}\label{fnkstrong}
Fix  $c > 1$. If $\E{|X|^3}<\infty$ Then under the conditions of Theorem \ref{thm:gbt_new}, 
for all $n$ and all $k$ and $m$ with $c^{-1} \leq k/\sqrt{n} \leq c $ and $0 \leq m \leq c\sqrt{n}$, 
\[
\p{S_n = k, -m \leq S_i < k~\forall~0 < i < n} = \Theta_{c}\pran{\frac{(m+1)(k+m+1)}{n^{2}}} = \Theta_{c}\pran{\frac{m+1}{n^{3/2}}}.
\]
\end{lem}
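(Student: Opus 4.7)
Since $c^{-1}\sqrt n \le k \le c\sqrt n$, we have $k+m+1 = \Theta_c(\sqrt n)$, so the two claimed expressions $(m+1)(k+m+1)/n^2$ and $(m+1)/n^{3/2}$ agree up to constants, and it suffices to prove the probability of interest is $\Theta_c((m+1)/n^{3/2})$. My plan is to adapt the three-segment argument used in the proof of Theorem~\ref{thm:gbt_new}. The strict upper barrier $S_i < k$ will be handled via time reversal: letting $\tilde S_j = S_n - S_{n-j}$, the event becomes $\{\tilde S_n = k,\ 1 \le \tilde S_j \le k+m\ \forall\ 0<j<n\}$, trading the strict upper barrier on $S$ for a strict positivity constraint on $\tilde S$. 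Since $\tilde S$ has the same distribution as $S$, the gain is purely structural.

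For the lower bound, I partition at $\lfloor \alpha n\rfloor$ and $\lceil (1-\alpha)n\rceil$ for $\alpha\in(1/4,1/2)$ close to $1/2$ and a small $\delta>0$, and I require three independent good events. First, the initial segment of $S$ should satisfy $S_{\lfloor \alpha n\rfloor}\in [k-7\delta\sqrt n,\ k-4\delta\sqrt n]$ with $-m\le S_i\le k-\delta\sqrt n$ throughout, yielding $\Omega((m+1)/\sqrt n)$ by the FKG plus Lemma~\ref{lem:pem_per} argument behind (\ref{eq:gbt_new2_3}). Second, the initial segment of $S^r$ should satisfy $S^r_{\lfloor \alpha n\rfloor}\in[4\delta\sqrt n,7\delta\sqrt n]$ with $1\le S^r_i\le 8\delta\sqrt n$ throughout; this is the crucial modification, replacing the lower barrier $-(k+m)$ from Theorem~\ref{thm:gbt_new} by the tight barrier $1$, and yielding only $\Omega(1/\sqrt n)$ by first-step conditioning on $S^r_1=x\ge 1$ followed by Theorem~\ref{thm:ballot_mean0} with $m'=0$, summed over the $\Theta(\sqrt n)$ admissible endpoints. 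Third, the middle segment contributes $\Omega(1/\sqrt n)$ exactly as in the argument for (\ref{eq:gbt_new2_5}). The product is $\Omega((m+1)/n^{3/2})$, which is the right order since we have traded a factor of $k+m+1\asymp \sqrt n$ (lost on the $S^r$ segment) against a factor of $\sqrt n$ in the target.

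For the upper bound I decompose via the strong Markov property at the same two partition times. Writing $P_1(y_1)$, $P_{\mathrm{mid}}(y_1,y_2)$, $P_3(y_2)$ for the three segmental probabilities, Theorem~\ref{thm:ballot_mean0} gives $P_1(y_1)=O((m+1)(y_1+m+1)/n^{3/2})$ after dropping the strict upper barrier; Theorem~\ref{thm:stone} yields $P_{\mathrm{mid}}(y_1,y_2)\le \p{S'_{\Theta(n)}=y_2-y_1}=O(1/\sqrt n)$; and applying time reversal to the terminal segment followed by first-step conditioning and Theorem~\ref{thm:ballot_mean0} gives $P_3(y_2)=O((k-y_2)/n^{3/2})$. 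Substituting $z_1=y_1+m+1$ and $z_2=k-y_2$, each ranging in $[1,k+m]$, the double sum becomes
\[\frac{m+1}{n^{7/2}}\sum_{z_1,z_2} z_1 z_2 \;=\; O\!\pran{\frac{(m+1)(k+m)^4}{n^{7/2}}} \;=\; O\!\pran{\frac{m+1}{n^{3/2}}},\]
using $k+m=\Theta(\sqrt n)$, as required.

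The main technical obstacle is the modified $S^r$ segment in the lower bound: one must simultaneously enforce strict positivity of $S^r_j$ \emph{and} land in the prescribed $\Theta(\sqrt n)$ window, which is not covered by a direct application of Lemma~\ref{lem:pem_per}(a) and instead requires combining first-step conditioning with the ballot estimate of Theorem~\ref{thm:ballot_mean0} and a careful summation over endpoints. A smaller but related nuisance in the upper bound is handling the regime $y_1 \le 0$ (where Theorem~\ref{thm:ballot_mean0} is stated for $k > 0$) and the degenerate cases $y_2$ very close to $k$, both of which can be handled by a symmetric argument on the reversed walk or by absorbing the boundary contributions into the constants.
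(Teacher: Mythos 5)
Your proof is correct in outline but takes a genuinely different, and considerably longer, route than the paper. The paper splits the walk only once, at time $\lfloor n/2\rfloor$, and sums over the intermediate value $s=S_{\lfloor n/2\rfloor}$ restricted to $k/3\le s\le 2k/3$: for such $s$, the forward half ends at distance $\ge k/3=\Omega(\sqrt n)$ below its upper barrier $k$, and the reversed half ends at distance $\Omega(\sqrt n)$ from its barriers as well, so Theorem~\ref{thm:gbt_new} (with $\epsilon=1/(3c)$) applies as a black box to each half; the product $\Theta\big((m+1)(m+s+1)/n^{3/2}\big)\cdot\Theta\big((k-s)/n^{3/2}\big)$ summed over the $\Theta(\sqrt n)$ admissible $s$ gives the lower bound, and the upper bound follows from the same split using Theorem~\ref{thm:ballot_mean0} on each half and summing over all $s\in[-m,k]$. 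No middle gluing segment and no re-run of the $N_\pm$/FKG/Donsker construction is needed. Your three-segment argument essentially re-proves a variant of Theorem~\ref{thm:gbt_new} rather than reusing it; your upper-bound computation is correct (and your explicit treatment of the strict barrier $S_i<k$ via time reversal and first-step conditioning is, if anything, more careful than the paper, which silently works with $S_i\le k$).

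One step of your lower bound does need repair. For the second segment you invoke Theorem~\ref{thm:ballot_mean0} with $m'=0$ to lower-bound an event that includes the upper barrier $S^r_i\le 8\delta\sqrt n$; that theorem carries no upper barrier, so it cannot lower-bound the constrained event, and the barrier cannot simply be dropped, since without it the glued path may violate $S_i\ge -m$ (indeed $S_{n-j}=k-\tilde S_j$ could fall below $-m$ if $\tilde S_j$ is unconstrained from above). The correct tool is Theorem~\ref{thm:gbt_new} with $m=0$ and $\epsilon'=\delta$, applied after your first-step conditioning converts strict positivity into nonnegativity: its upper barrier $k'+\delta\sqrt n$ then sits below $8\delta\sqrt n$, and summing over the $\Theta(\sqrt n)$ admissible endpoints yields the claimed $\Omega(1/\sqrt n)$. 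With that substitution (and the analogous use of Theorem~\ref{thm:gbt_new}, rather than the bare ballot theorem, wherever a two-sided constraint must be retained in a lower bound), your argument goes through.
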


\begin{proof}
Define the {\em backwards} random walk $S^b$ by $S^b_0=0$ and for $i \geq 0$, $S^b_i = S^b_{i-1}+X_{n-i}$. 
In order that $S_n=k$ and that $-m \leq S_i \leq k$ for all 
$0 < i < n$, it is necessary and sufficient that for some integer $s$ with $-m \leq s \leq k$, we have 
\begin{itemize}
\item $S_{\lfloor n/2 \rfloor} = s$ and $-m \leq S_i \leq k$ for all $i$ with $0 \leq i \leq \lfloor n/2\rfloor$ (call this event $A_s$), and 
\item $S_{\lceil n/2 \rceil}^b = k-s$ and $0 \leq S_i^b \leq k+m$ for all $i$ with $0 \leq i \leq \lceil n/2\rceil$ (call this event $B_s$).  
\end{itemize}
The events $A_s$ and $B_s$ are independent and, for $s\neq s'$, $A_s$ and $A_{s'}$ are disjoint and $B_{s}$ and $B_{s'}$ are disjoint. 
Furthermore, for any $s$ with $k/3 \leq s \leq 2k/3$, 
\[
\min\{k-s,m+s\} \geq k/3 \geq \sqrt{n}/(3c) > \sqrt{\lceil n/2\rceil}/(3c),
\] 
so for such $s$ we can apply Theorem \ref{thm:gbt_new} 
with $\epsilon = 1/(3c)$ to bound $\p{A_s}$ and $\p{B_s}$. Since both $m+s$ and $k-s$ are $\Theta_{c}(n^{1/2})$ for all $s$ in the above range, we thus have 
\begin{eqnarray}
\p{S_n=k, -m \leq S_i \leq k~\forall~0 < i < n} & \geq& \sum_{k/3\le s \le 2k/3} \p{A_s,B_s} \nonumber\\
			& = & \Theta_{c}\Bigg(\sum_{k/3\le s \le 2k/3}\frac{(m+1)(m+s+1)}{n^{3/2}}\cdot\frac{k-s}{n^{3/2}}\Bigg)\nonumber\\
			& = & \Theta_{c}\pran{\frac{m+1}{n^{3/2}}},\label{eq:cor:gbt_new2_1}
\end{eqnarray}
proving the lower bound. To prove the upper bound, 
we observe that for any $s$ with $-m \leq s \leq k$, by dropping the condition that $S_i \leq k$ for $i$ from $1$ to $\lfloor n/2 \rfloor$ from the definition of $A_s$ we 
may use Theorem \ref{thm:ballot_mean0} to obtain the bound 
\[
\p{A_s} = O_{c}\pran{\frac{(m+1)(m+s+1)}{n^{3/2}}},
\]
and we may similarly see that $\p{B_s} = O_{c}((k-s)/n^{3/2})$. 
Summing these bounds over $-m \leq s \leq k$ yields the requisite upper bound. 
\end{proof}

Applying Theorem~\ref{thm:gbt_new} to the first $m$ steps of the random walk, 
and applying Lemma \ref{fnkstrong} to the random walk restarted at time $m$ yields the following corollary. This is straightforward and we omit the details.
\begin{cor}\label{fnksmj}
Fix $c > 0$. If $\E{|X|^3}< \infty$ 
then for all $n$, all $k$ with $c^{-1} < k/\sqrt{n} \le c$, all $1\le m \leq n/2$ and all $j \leq \min\{\sqrt{m},k/2\}$, 
\[
\p{S_n=k; 0\le S_i\le k~ \forall~ 0\le i<n;S_m = j} = \Theta\left(\frac{(j+1)^2
\cdot (k+1)}{m^{3/2}\cdot (n-m)^{2}}\right).
\]
\end{cor}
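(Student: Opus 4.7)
The plan is to condition on $S_m = j$ and exploit the independence of the increments $X_1,\ldots,X_m$ and $X_{m+1},\ldots,X_n$. Writing $S'_i = S_{m+i} - j$ for an independent walk with the same step distribution as $S$, the target event splits into
\[
E_1 = \{S_m = j,\, 0 \le S_i \le k~\forall~0 \le i \le m\}
\]
and (via the shifted walk)
\[
E_2 = \{S'_{n-m} = k - j,\, -j \le S'_i \le k - j~\forall~0 < i < n-m\},
\]
so the target probability factors as $\p{E_1} \cdot \p{E_2}$. It thus suffices to prove $\p{E_1} = \Theta((j+1)/m^{3/2})$ and $\p{E_2} = \Theta((j+1)(k+1)/(n-m)^2)$ and multiply.

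For $\p{E_1}$, I would drop the upper constraint $S_i \le k$ to obtain an upper bound directly from Theorem~\ref{thm:ballot_mean0} applied to the first $m$ steps (with lower barrier $0$ and final height $j \le \sqrt m$), yielding $O((j+1)/m^{3/2})$. For the matching lower bound I would apply Theorem~\ref{thm:gbt_new} to the first $m$ steps with $\epsilon = 1/(2c)$; the event that Theorem~\ref{thm:gbt_new} controls has the walk staying below $j + \epsilon\sqrt m$, and so is contained in $E_1$ provided $j + \epsilon\sqrt m \le k$. This is where the hypothesis $j \le k/2$ is used: combined with $k \ge c^{-1}\sqrt n \ge c^{-1}\sqrt m$, it gives $k - j \ge k/2 \ge (1/(2c))\sqrt m = \epsilon\sqrt m$, as required.

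For $\p{E_2}$, I would apply Lemma~\ref{fnkstrong} to the shifted walk $S'$ with the relabelling $n \to n-m$, $k \to k-j$, $m \to j$; the conclusion is $\Theta((j+1)(k-j+j+1)/(n-m)^2) = \Theta((j+1)(k+1)/(n-m)^2)$. Multiplying the two factors yields the claim. I expect the main obstacle to be purely bookkeeping: checking that the hypotheses of Lemma~\ref{fnkstrong} hold with constants depending only on the original $c$. Specifically, one needs $c'^{-1} \le (k-j)/\sqrt{n-m} \le c'$, which follows from $j \le k/2$, $m \le n/2$ and $k/\sqrt n \in [c^{-1}, c]$ (the upper bound uses $\sqrt{n-m} \ge \sqrt{n/2}$ and the lower bound uses $k-j \ge k/2$); and $j \le c'\sqrt{n-m}$, which follows from $j \le \sqrt m \le \sqrt{n-m}$ since $m \le n/2 \le n-m$. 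A minor cosmetic point is that Lemma~\ref{fnkstrong} and Theorem~\ref{thm:gbt_new} are stated with strict upper inequality while the corollary uses $\le k$; this is absorbed into the $\Theta$ via the matching two-sided bound from Theorem~\ref{thm:ballot_mean0}.
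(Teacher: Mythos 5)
Your proof is correct and takes exactly the route the paper intends: its own (omitted) argument is precisely ``apply Theorem~\ref{thm:gbt_new} to the first $m$ steps and Lemma~\ref{fnkstrong} to the walk restarted at time $m$,'' and you have supplied the independence splitting and the constant/barrier bookkeeping that the paper leaves out.
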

If $X$ satisfies the conditions of the above theorems and corollary, then so does $-X$, 
and so applying Corollary~\ref{fnksmj} to the reversed random walk $S^r$ and rewriting the result in terms of $S$, 
we obtain the following.
\begin{cor}\label{fnksmjrev}
Fix $c > 0$. If $\E{|X|^3}< \infty$ 
then for all $n$, all $k$ with $c^{-1} < k/\sqrt{n} \le c$, all $1\le m \leq n/2$ and all $j \leq \min\{\sqrt{m},k/2\}$, 
\[
\p{S_n=k; 0\le S_i< n~\forall 0\le i<n; S_{n-m} = k-j} = \Theta_c\left(\frac{(j+1)^2\cdot (k+1)}{m^{3/2}\cdot (n-m)^{2}}\right).
\]
\end{cor}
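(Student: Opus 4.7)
The plan is to derive this corollary immediately from Corollary~\ref{fnksmj} by a time-reversal that preserves the step distribution. Concretely, I would define $T_0=0$ and $T_i = S_n - S_{n-i}$ for $1 \le i \le n$. The increments satisfy $T_{i+1}-T_i = X_{n-i}$, and since the $X_j$ are i.i.d.\ copies of $X$, the walk $(T_i)_{0\le i \le n}$ has the same joint law as $(S_i)_{0\le i\le n}$. Note that this is a slightly different reversal from the walk $S^r$ used in the proof of Theorem~\ref{thm:gbt_new} (whose step distribution was $-X$): here it matters that $T$ inherits the step distribution $X$, so that Corollary~\ref{fnksmj} applies verbatim rather than to a walk with step $-X$.

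Next I would check that the event in the statement translates, under this correspondence, into precisely the event of Corollary~\ref{fnksmj}. On $\{S_n=k\}$ one has $T_i = k-S_{n-i}$, and so
\[
\{S_n=k,~0\le S_i \le k~\forall~0\le i\le n,~S_{n-m}=k-j\}
 = \{T_n=k,~0\le T_i \le k~\forall~0\le i\le n,~T_m=j\}.
\]
The hypotheses $c^{-1}<k/\sqrt{n}\le c$, $1\le m\le n/2$ and $j\le\min\{\sqrt m,\,k/2\}$ are preserved verbatim under this relabelling, since they only constrain $k$, $m$ and $j$.

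Applying Corollary~\ref{fnksmj} to the walk $(T_i)$ then yields the desired estimate $\Theta_c\bigl((j+1)^2(k+1)/(m^{3/2}(n-m)^{2})\bigr)$. There is no real obstacle in this step: all of the analytic content has already been carried out in Corollaries~\ref{fnk} and~\ref{fnksmj}, and the only point to verify is that the event, the path constraint, and the endpoint conditions all translate cleanly under the combined reversal and vertical reflection $(i,x)\mapsto(n-i,\,k-x)$.
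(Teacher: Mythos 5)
Your proof is correct and follows essentially the same route as the paper, which also derives Corollary~\ref{fnksmjrev} by applying Corollary~\ref{fnksmj} to the reversed walk; your version is if anything slightly more careful, since composing the time reversal with the reflection $x\mapsto k-x$ makes the step distribution exactly $X$ and the event match Corollary~\ref{fnksmj} verbatim, rather than invoking the symmetry of the hypotheses under $X\mapsto -X$.
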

\begin{proof}[Proof of Corollary~\ref{gnk}]
First, let $F_{n,k}$ be the event that $S_n=k$ and $0\le S_i< k$ for all $i$ such that $0\le i<n$. Let also $H_{n,k}$ be the event that $F_{n,k}$ occurs, and $k/4\le S_{\lfloor n/2 \rfloor}\le k/2$.
For each integer $m$ with $k/4 \leq m \leq k/2$, we will consider the following two events: 
\begin{itemize}
\item $S_{\lfloor n/2\rfloor}=m$ and $S_i \leq 3k/4$ for all $0 \le i \le n/2$. (We call this event $A_m$.)
\item Writing $S^\star_i=S_{\lfloor n/2\rfloor+i}-S_{\lfloor n/2\rfloor}$, we have $S^\star_{n-\lfloor n/2\rfloor}=k-m$ and $-m \le S^\star_i \le k-m$ for 
$0 \le i \le n-\lfloor n/2 \rfloor$. (We call this event $B_m$.)
\end{itemize}
For all $n$ sufficiently large and for any $m$ in the above range, if $A_m$ and $B_m$ both occur then $H_{n,k}$ occurs.  
Now apply Theorem \ref{thm:gbt_new} to the event $A_m$, and Lemma \ref{fnkstrong} to the event $B_m$, 
and use the independence of $A_m$ and $B_m$ to see that 
\[
\p{A_m,B_m} = \p{A_m}\cdot \p{B_m} = \Theta_c\pran{\frac{m}{n^{3/2}} \cdot \frac{m k}{n^2}} = \Theta_c\pran{\frac{k}{n^{5/2}}},
\]
so summing over $k/4 \leq m \leq k/2$, we obtain that there is some constant $\gamma(c) > 0$ for which 
\begin{equation}\label{hnklower}
\p{H_{n,k}} \geq \frac{\gamma(c)k}{n^2}.
\end{equation}      
Next, for fixed integer $m_0>0$, let $B_{n,k}(m_0)$ be the event that there is $m\in [m_0,n/2]$ for which
$S_{n-m} > k-m^{1/7}$. By Corollary~\ref{fnksmjrev}, we have 
\begin{align*}
\p{F_{n,k} ;B_{n,k}(m_0)}	
& \leq \sum_{m=m_0}^{\lfloor n/2\rfloor} \sum_{j=1}^{\lfloor m^{1/7}\rfloor} \p{F_{n,k},S_{n-m} = k-j} \\
& = O_c\Bigg(\sum_{m=m_0}^{\lfloor n/2\rfloor} \sum_{j=1}^{\lfloor m^{1/7}\rfloor} \frac{(j+1)^2\cdot (k+1)}{m^{3/2}\cdot (n-m)^{2}}\Bigg) \\
& = O_c\Bigg(\sum_{m=m_0}^{\lfloor n/2\rfloor} \frac{k+1}{m^{15/14} \cdot n^2}\Bigg) \\
& = O_c\bigg(\frac{k+1}{m_0^{1/14} n^2}\bigg). 
\end{align*}
We may thus find $m_0=m_0(c)$ large enough that 
\[
\p{H_{n,k},B_{n,k}(m_0)}\le \p{F_{n,k},B_{n,k}(m_0)} \leq \frac{\gamma(c)k}{2n^2}.
\]
Combining this bound with (\ref{hnklower}), we obtain that 
\begin{equation} \label{eq:desired_lb}
\p{H_{n,k},\overline{B_{n,k}(m_0)}} \geq \frac{\gamma(c)(k+1)}{2n^2}.
\end{equation}        
The inequality $3k/4 \leq k-k^{1/7}$ clearly holds for all $k \geq c^{-1}\sqrt{n}$ as long as $n$ is sufficiently large. Thus, for $n$ sufficiently large, if $H_{n,k}$ and $B_{n,k}(m_0)^c$ both occur, then the desired event 
$$S_n=k, \qquad 0\le S_i<k\quad\forall~0\le i<n, \qquad\mbox{and}\qquad S_{n-i}\le k-i^{1/7}\quad\forall m_0\le i\le n$$  
also occurs, which by (\ref{eq:desired_lb}) yields the result. 
\end{proof}

\small
\setlength{\bibsep}{.3em}
\bibliographystyle{plainnat}
\bibliography{killedbrw}

\end{document}